\documentclass[11pt,reqno]{amsart}
\usepackage{tikz}
\textheight    23cm
\textwidth     15.cm
\addtolength{\textheight}{-0.75in}
\oddsidemargin   .4cm
\evensidemargin  .4cm
\parskip 6pt
\usepackage{subfig}
\usepackage{epstopdf}
\usepackage{epsfig}
\usepackage{math}
\usepackage{mathtools}
\usepackage{algorithm}
\usepackage[noend]{algpseudocode}

\usepackage[numbers,sort&compress]{natbib}

\newcommand{\half}{\frac{1}{2}}
\newcommand{\energy}{\mathcal{E}}
\newcommand{\entropy}{\mathcal{H}}

\newcommand{\dW}{\mathcal{W}_2}

\newcommand{\prox}{\text{prox}}

\newcommand{\Amat}{\mathsf{A}}
\newcommand{\Bmat}{\mathsf{B}}

\newcommand{\Imat}{\mathsf{I}}

\newcommand{\indi}{\chi}
\newcommand{\Smat}{\mathsf{S}}
\newcommand{\Hmat}{\mathsf{H}}
\newcommand{\rd}{\mathrm{d}}

\newcommand{\grad}{\nabla}

\newcommand{\tmax}{t_\textrm{max}}
\newcommand{\Rd}{{\mathbb{R}^d}}
\newcommand{\veci}{{\bf i}}
\newcommand{\vece}{{\bf e_i}}
\newcommand{\Ul}{u^{(l)}}
\newcommand{\ulp}{u^{(l+1)}}
\newcommand{\Hl}{{\Hmat^{(l)}}}

\newcommand{\normHl}[1]{ \| #1 \|_{\Hl}}
\newcommand{\normtwo}[1]{ \| #1 \|_{2}}
\newcommand{\vecrho}{\boldsymbol{\rho}}
\newcommand{\vecm}{\bold{m}}

\usepackage{tikz}
\usetikzlibrary{shapes,arrows}
\tikzstyle{decision} = [diamond, draw, fill=blue!20, 
    text width=4.5em, text badly centered, node distance=3cm, inner sep=0pt]
\tikzstyle{block} = [rectangle, draw, fill=blue!20, 
    text width=5em, text centered, rounded corners, minimum height=4em]
\tikzstyle{line} = [draw, -latex']
\tikzstyle{cloud} = [draw, ellipse,fill=red!20, node distance=3cm,
    minimum height=2em]
    
\usetikzlibrary{positioning}
\tikzset{main node/.style={circle,fill=blue!20,draw,minimum size=1cm,inner sep=0pt},  }

\usepackage{soul}
\usepackage{color}

\usepackage{hyperref}
\usepackage{graphicx} \usepackage{placeins}

\begin{document}
\title[Fisher information Regularization]{Fisher information regularization schemes for Wasserstein gradient flows}
\author{Wuchen Li}
\address{Mathematics department, University of California, Los Angeles 90095}
\email{wcli@math.ucla.edu}
\author{Jianfeng Lu}
\address{Departments of Mathematics, Physics, and Chemistry, Duke University, Box 90320,  Durham, NC 27708.}
\email{jianfeng@math.duke.edu}
\author{Li Wang}
\address{School of Mathematics, University of Minnesota, Twin cities, MN 55455.}
\email{wang8818@umn.edu}

\keywords{Time discretization; Gradient flow; Fisher information; Optimal transport; Schr{\"o}dinger bridge problem.}

\maketitle

\begin{abstract}
We propose a variational scheme for computing Wasserstein gradient flows. The scheme builds upon the Jordan--Kinderlehrer--Otto framework with the Benamou-Brenier's dynamic formulation of the quadratic Wasserstein metric and adds a regularization by the Fisher information. This regularization can be derived in terms of energy splitting and is closely related to the Schr{\"o}dinger bridge problem. It improves the convexity of the variational problem and automatically preserves the non-negativity of the solution. As a result, it allows us to apply sequential quadratic programming to solve the sub-optimization problem. We further save the computational cost by showing that no additional time interpolation is needed in the underlying dynamic formulation of the Wasserstein-2 metric, and therefore, the dimension of the problem is vastly reduced. Several numerical examples, including porous media equation, nonlinear Fokker-Planck equation, aggregation diffusion equation, and Derrida-Lebowitz-Speer-Spohn equation, are provided. These examples demonstrate the simplicity and stableness of the proposed scheme. 
\end{abstract}

\section{Introduction}
Consider the general continuity equation of the form:
\begin{equation}\label{gd}
\partial_t \rho = - \nabla \cdot (\rho v) := \nabla \cdot [\rho \nabla (U'(\rho) + V + W \ast \rho )], \quad \rho(0, \cdot) = \rho_0\,,
\end{equation}
where $\rho(t,x)$, $x\in\Omega\subset \mathbb{R}^n$ is the particle
density function, $U(\rho)$ is an internal energy, $V(x)$ is a drift
potential, and $W(x,y)=W(y,x)$ is an interaction potential. $\nabla$,
$\nabla\cdot$ are gradient and divergence operators with respect to $x$
in $\Omega$. This equation can be derived as a mean-field limit of
particle systems with a number of physical and biological
applications, such as granular materials \cite{CMV03}, chemotaxis,
animal swarming \cite{CFTV10, BCCD16}, and many others.  In
particular, the Fokker-Planck equation \cite{JKO98}, porous medium
equation \cite{Otto}, aggregation equation \cite{TBL06, FHK11},
Keller-Segel equation \cite{KellerSegel1971}, and quantum
drift-diffusion equation \cite{GSG09quantum} all fall within this
framework.

As written, equation \eqref{gd} possesses two immediate properties: it
preserves the non-negativity of the solution and conserves total
mass. Therefore, in what follows, we will always consider nonnegative
initial data with mass one, so that the solution is in the set of
probability measures on $\Omega$, $\mathcal P (\Omega)$. The third
property of \eqref{gd} is the dissipation of the energy, which can be
seen as follows. Given an energy
$\energy: \mathcal {P}(\Omega) \rightarrow \mathbb{R} \cup \{+\infty
\}$,
we may formally define its gradient with respect to the quadratic
Wasserstein metric $\dW$ as
\[ \grad_{\dW} \energy(\rho) = - \grad \cdot \left( \rho \grad  \delta \energy \right)  \,, \]
where $\delta$ always denotes the first variation in $\rho$ throughout the paper. 
Comparing it with \eqref{gd}, one can write the velocity field as $v = - \nabla \delta \energy$, and view equation (\ref{gd}) as the gradient flow of the energy
\begin{equation} \label{eqn:energy}
\energy (\rho) = \int_{\Omega} \left[U(\rho(x)) + V(x) \rho(x) \right]\rd x + \half \int_{\Omega \times \Omega} W(x-y) \rho(x) \rho(y) \rd x \rd y\,.
\end{equation}
Differentiating the energy (\ref{eqn:energy}) along solutions of \eqref{gd}, one formally obtains the decreasing of energy along the gradient flow $\frac{d}{dt} \energy(\rho)(t) = - \int_{\RR^d} |v(t,x)|^2 \rho(t,x) \rd x$, which indicates that the solution evolves in the direction of \emph{steepest descent} of an energy. This property entails a full characterization of the set of stationary states, and provides a necessary tool to study its stability. 

Desired numerical methods for \eqref{gd} are to attain all three properties above at the discrete level, which, however, are rather challenging. Existing methods have been developed on different prospects of the equation. One kind of methods views it as an advection diffusion equation and employs finite difference, finite volume, or discontinuous Galerkin \cite{Filbet06, CK08, CCH15, LWZ17, SCS18, BCH}. Such methods are {\it explicit} or {\it semi-implicit} in time, so the per time computation is cheap. But they often suffer from stability constraints, due either to the degeneracy of the diffusion or the non-locality from the interaction potential, such as the mesa problem \cite{LTWZ18b}. Another approach leverages structural similarities between \eqref{gd} and equations from fluid dynamics to develop particle methods \cite{CB16, CCCC18,BLL12,OM17, CCP18}. On one hand, particle methods naturally conserve mass and positivity, and they can also be designed to respect the underlying gradient flow structure of the equation so as to dissipate the energy along time. On the other hand, a large number of particles is often required to resolve finer properties of solutions.

A third class of methods builds on a variational formulation, following the seminal work by Jordan, Kinderlehrer, and Otto \cite{JKO98}. Given a time step $\tau>0$, the scheme (known as the JKO scheme) recursively defines a sequence $\rho^k$ as
\begin{equation} \label{JKO}
\rho^0 = \rho_0, \quad \rho^{k+1} = \arg\min_{\rho\in \KK} \left\{  \dW^2 (\rho, \rho^k) +  2\tau \energy(\rho) \right\} \,,
\end{equation}
where
$\KK = \left\{ \rho: \rho \in \mathcal{P}(\Omega), ~
  \int_{\Omega}|x|^2 \rho \,\rd x < + \infty \right\}\,$, and $\dW$ denotes the quadratic Wasserstein distance between two
probability measures. Therefore, \eqref{JKO} offers a positivity
preserving, energy dissipating, and unconditionally stable time
discretization. A major bottleneck of this approach is the computation
of $\dW^2 (\rho, \rho^k)$, which is an infinite dimensional
minimization problem. Hence, early works that use \eqref{JKO} avoid
direct computing $\dW^2 (\rho, \rho^k)$ either by linearization
\cite{GT06, BCW09} or by diffeomorphisms \cite{BCC08,CM09, CRW16},
which lead to methods that lose some inherited properties in
\eqref{JKO} or are limited by complicated geometry and structure. Only
recent progress in computing $\dW$ has enabled the direct application
of \eqref{JKO} \cite{peyre2015entropic, BCL16, CPSV18, CCWW18}.

In the present work, we will adopt Benamou-Brenier's dynamic formulation for the Wasserstein distance \cite{BB00}. In particular, given two measures $\rho_0$ and $\rho_1$, their Wasserstein distance can be obtained by solving
\begin{equation} \label{BB00} 
\begin{dcases}
&\dW(\rho_0,\rho_1)   = \inf_{\rho,v}  \left\{  \int_0^1 \int_\Omega |v(t,x)|^2 \rho(t,x) \rd x  \rd t \right\}^{1/2} ,
\\ & \textrm{s.t.}  \qquad  \partial_t \rho + \nabla \cdot (\rho v) = 0  
\\ & \hspace{1.2cm} (\rho v) \cdot \nu = 0   \text{ on } \partial \Omega \times [0,1], \quad 
 \rho(0, x ) = \rho_0(x), \ \rho(1, x) = \rho_{1}(x) \,,
\end{dcases}
\end{equation}
where $\nu$ is the outer unit normal on the boundary of the domain $\Omega$. Adapting \eqref{BB00} into \eqref{JKO}, and let $m = \rho v$, we have the following computable reformulation of the JKO scheme: given $\rho^k(x)$, $\rho^{k+1}(x) = \rho(1,x)$ with $\rho(t,x)$ solving 
\begin{equation} \label{classicalJKO}
\begin{dcases}
& (\rho,m)=\arg\inf_{ \rho,m}~ \int_0^1\int_\Omega F(\rho,m)dxdt  + 2\tau\energy(\rho(1,\cdot))
\\ & \textrm{s.t.} \quad  \partial_t \rho + \nabla \cdot m = 0, ~ \rho(0,x) = \rho^k(x)\,, ~ m\cdot \nu = 0\,,
\end{dcases}
\end{equation}
where 
\begin{align} \label{PHI}
F(\rho, m) = \left\{  \begin{array}{cc}  \frac{\|m \|^2}{ \rho} & \textrm{ \hspace{-1.45cm} if } \rho>0\,, \\ 0 & \textrm{\hspace{0.2cm} if } (\rho, m)=(0,0) \,,
\\ +\infty & \textrm{ \hspace{-1cm} otherwise\,.}  \end{array} \right.
\end{align}
To solve \eqref{classicalJKO}--\eqref{PHI}, there are two sources of difficulties. One lies in the non-smooth function of $F$, so that second order information that often used to accelerate the optimization can not be applied. Occasionally, erroneous solution near $\rho =0$ may be produced. The other comes from the artificial time introduced in the dynamic formulation \eqref{BB00}, which increases the dimension of the problem. 

To overcome these two issues, we propose the following scheme: $\rho^{k+1} = \rho(1,x)$ where 
\begin{equation}\label{MP}
\begin{dcases}
&\rho^{k+1}(x)=\arg\inf_{\rho,m}~\int_\Omega \Big(\frac{\|m(x)\|^2}{\rho(x)}+{\beta^{-2}} \tau^2 \|\nabla\log\rho(x)\|^2\rho(x)\Big)\rd x + 2\tau \energy(\rho)\\
&\textrm{s.t.} \quad  \rho(x)-\rho^k(x)+\nabla\cdot m(x)=0,~ m \cdot \nu = 0\,.
\end{dcases}
\end{equation}
The additional term,
$\int_\Omega \|\nabla\log\rho(x)\|^2\rho(x) \rd x$ is the Fisher
information functional. It keeps $\rho$ away from zero (see
Theorem~\ref{thm-property}) and thus simplifies
$\int_\Omega F(\rho, m)\rd x$ to just
$\int_\Omega\frac{\|m\|^2}{\rho}\rd x$. More importantly, it improves
the convexity of the cost functional and gives access to the second
order sequential programming which enjoys much faster convergence. In
addition, we replace the time derivative in the dynamics by a {\it one
  step} finite difference. We shall show in Theorem~\ref{th3} that
such a simplification will not violate the first order accuracy of the
original JKO formulation \eqref{classicalJKO}. Furthermore, as we
shall see in Section \ref{section3}, compared to classical backward
Euler method that may suffer from ill-conditioned Jacobian \cite{BCH},
\eqref{MP} provides a symmetric, structure preserving version of
implicit method that is insensitive to the condition number of the
Hessian, and has guaranteed convergence. We note that the relation
between Fisher information and Schr{\"o}dinger bridge problem (SBP)
can be seen from
\cite{Carlen,Chen2016,Conforti,Leger,Leonard2013_surveyb}, and will be
further discussed in Section~\ref{section2.5}.

It is also important to mention that the Fisher information
regularization is closely related to the entropic regularization that
has been successfully applied in many optimal transport problems
\cite{cuturi13, peyre2015entropic,
  CarlierDuvalPeyreSchmitzer2015_convergence, gentil15, CPSV18}.  There, the
Kantorovich formulation based on the joint distribution $\pi(x,y)$
between two measures is adopted, and the entropic regularization term
$\int\int \pi(x,y)\log \pi(x,y)dxdy$ is added to the cost function, so
that an iterative projection method, Sinkhorn method or more general
Dykstra's algorithm, can be applied with linear convergence. This
method, when applied to gradient flow problem, has a major difficulty
in computing the proximal of the energy \eqref{eqn:energy} with
respect to the Kullback-Leibler divergence, which does not have a
closed form in general.

The paper is organized as follows. In the next section, we provide necessary background on the dynamical formulation of Schrodinger bridge problem (SBP), its relation with Wasserstein gradient flows and the Fisher information functional. We then derive the Fisher information regularized semi-discrete scheme in the end. In Section 3, we introduce a fully discrete scheme and study the properties of this new scheme. Numerical results are provided in Section \ref{section4}, and the paper is concluded in Section 5.

\section{Semi-discretization with Fisher information regularization}\label{section2.5}
In this section, we briefly review the Schr{\"o}dinger bridge problem, Fisher information regularization and Wasserstein gradient flow. We then weave together these ideas to derive our new regularized time discretization. 

\subsection{Schr{\"o}dinger Bridge problem and Fisher regularization}
Consider a bounded convex domain $\Omega\subset \mathbb{R}^n$, and the probability density space 
$$\mathcal{P}(\Omega)=\Big\{\rho\in L^1(\Omega)\colon \int_\Omega\rho(x)\rd x=1,~\rho(x)\geq 0\Big\}.$$
\begin{definition}[Schr{\"o}dinger bridge problem]
Denote $\textrm{SBP}\colon\mathcal{P}(\Omega)\times \mathcal{P}(\Omega)\rightarrow \mathbb{R}$. Given $\rho^0$, $\rho^1\in\mathcal{P}(\Omega)$, let
\begin{equation}
\label{eq:SB}
\textrm{SBP}(\rho^0, \rho^1)=\inf_{\rho, b} \int_0^1 \!\!\!\int_\Omega \| b(t,x)\|^2\,\rho(t,x)\,\rd x\,\rd t\,,
\end{equation}
where the infimum is taken among all drift functions $b\colon [0,1]\times \Omega\rightarrow \mathbb{R}^n$ and density functions $\rho\colon [0,1]\times \Omega\rightarrow \mathbb{R}$ satisfying the Fokker-Planck equation
\begin{equation}\label{eq: ct}
\partial_t\rho(t,x) + \nabla\cdot(\rho(t,x) b(t,x)) = \beta^{-1} \tau \Delta\rho(t,x),
\end{equation}
with the fixed initial and ending density functions
\begin{equation*}
\rho(0,x) = \rho^0(x), \quad \rho(1,x) = \rho^1(x)\,, \quad x \in \Omega \,,
\end{equation*}
and Neumann boundary condition for $b$: $b\cdot \nu = 0$ on $[0,1] \times \partial \Omega$.
\end{definition}
Here, $\beta$, $\tau$ are two given constant parameters in $\mathbb{R}$. Note specifically that when $\beta^{-1} \tau=0$,  $\textrm{SBP}(\rho^0, \rho^1)$ equals to the Wasserstein-2 distance between $\rho^0$ and $\rho^1$, where the problem \eqref{eq:SB} is equivalent to the Benamou--Brenier formula \cite{Benamou2000}. Here we are using the product $\beta^{-1} \tau$ as a regularization parameter just to facilitate the derivation for the gradient flow later. 

Interestingly, the variational problem \eqref{eq:SB} has the following symmetric reformulations. 

\begin{proposition}[Fisher information regularization]  \label{prop1}
Denote $\mathcal{H}(\rho)=\int_\Omega\rho(x)\log\rho(x)\rd x$, then
\begin{equation}\label{SBPN}
\textrm{SBP}(\rho^0,\rho^1)=\inf_{\rho, m} \int_0^1 \!\!\!\int_\Omega \left(\frac{\|m\|^2}{\rho}+ \beta^{-2} \tau^2 \rho \left\|\nabla \delta  \entropy \right\|^2\right)\,\rd x\,\rd t+2\beta^{-1} \tau (\mathcal{H}(\rho^1)-\mathcal{H}(\rho^0))
\end{equation}
subject to the dynamical constraint
\begin{equation} \label{ct2}
\partial_t\rho(t,x)+ \nabla\cdot m(t,x) =0,
\end{equation}
with initial and boundary conditions: 
\[
\rho(0,x) = \rho^0(x),~ \rho(1,x) = \rho^1(x) , ~x \in \Omega, \qquad m \cdot \nu = 0, ~ (t,x) \in [0,1] \times \partial \Omega\,.
\]
\end{proposition}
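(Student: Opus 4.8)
The plan is to reduce Proposition~\ref{prop1} to a single ``completing the square'' identity together with a telescoping (integration by parts in time) for the entropy. The bridge between the two formulations is the flux substitution
\[
m = \rho\, b - \beta^{-1}\tau\,\grad\rho,
\qquad\text{equivalently}\qquad
b = \frac{m}{\rho} + \beta^{-1}\tau\,\grad\log\rho,
\]
valid wherever $\rho>0$. The first thing I would check is that this substitution maps admissible pairs for the SBP onto admissible pairs for \eqref{SBPN} bijectively: the Fokker--Planck constraint \eqref{eq: ct} reads $\partial_t\rho + \grad\cdot(\rho b - \beta^{-1}\tau\grad\rho)=0$, which is exactly \eqref{ct2} once $m=\rho b-\beta^{-1}\tau\grad\rho$; and the no-flux condition $m\cdot\nu=0$ in \eqref{SBPN} is the reflecting boundary condition for the diffusion, compatible with $b\cdot\nu=0$ once $\grad\rho\cdot\nu=0$ on $\partial\Omega$. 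For fixed $\rho>0$ the map $b\mapsto m$ is a pointwise affine bijection, so the two infima range over the same set of densities with corresponding velocity/momentum fields.

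Next I would substitute into the kinetic cost and expand:
\[
\rho\,\|b\|^2
= \rho\left\| \frac{m}{\rho} + \beta^{-1}\tau\,\grad\log\rho\right\|^2
= \frac{\|m\|^2}{\rho}
+ 2\beta^{-1}\tau\, m\cdot\grad\log\rho
+ \beta^{-2}\tau^2\,\rho\,\|\grad\log\rho\|^2 .
\]
Since $\delta\entropy = \log\rho + 1$, we have $\grad\delta\entropy = \grad\log\rho$, so the last term is precisely the Fisher term $\beta^{-2}\tau^2\rho\|\grad\delta\entropy\|^2$ appearing in \eqref{SBPN}, and the first term is the Benamou--Brenier integrand. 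It then remains only to account for the cross term.

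The crux is to show that the cross term integrates to the fixed boundary entropy difference, independent of the interior path. Using the continuity equation \eqref{ct2} and $\delta\entropy = \log\rho+1$,
\[
\frac{\rd}{\rd t}\,\entropy(\rho(t,\cdot))
= \int_\Omega (\log\rho + 1)\,\partial_t\rho\,\rd x
= -\int_\Omega (\log\rho+1)\,\grad\cdot m\,\rd x
= \int_\Omega m\cdot\grad\log\rho\,\rd x,
\]
where the last equality is integration by parts in space, the boundary term vanishing by $m\cdot\nu = 0$. Integrating in $t\in[0,1]$ gives $\int_0^1\!\!\int_\Omega m\cdot\grad\log\rho\,\rd x\,\rd t = \entropy(\rho^1)-\entropy(\rho^0)$, which depends only on the fixed endpoints. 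Hence the cross term contributes the constant $2\beta^{-1}\tau(\entropy(\rho^1)-\entropy(\rho^0))$, can be pulled outside the infimum, and the two variational problems coincide.

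The step I expect to be most delicate is not the algebra but the regularity and positivity bookkeeping: one needs $\rho>0$ (with enough integrability of $\grad\log\rho$) for the substitution and for $\entropy$ to be differentiable in $t$, and one must justify that the spatial integration by parts is legitimate and that the boundary flux genuinely vanishes, i.e.\ that the reflecting boundary data are equivalent to $m\cdot\nu=0$. These are the points where one would invoke the convexity of $\Omega$ and the assumed smoothness of the minimizing path; formally, however, the identity is exact, and the proof is precisely the completing-the-square plus telescoping argument above.
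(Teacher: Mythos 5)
Your proposal is correct and is essentially the paper's own proof: the same substitution $m=\rho b-\beta^{-1}\tau\nabla\rho$ (i.e.\ $v=b-\beta^{-1}\tau\nabla\delta\entropy$, $m=\rho v$), the same expansion of the square producing the kinetic, Fisher, and cross terms, and the same integration-by-parts/telescoping argument showing the cross term equals $\entropy(\rho^1)-\entropy(\rho^0)$. Your added remarks on the bijectivity of the change of variables and on positivity/boundary regularity are finer bookkeeping than the paper records, but the mathematical route is identical.
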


\begin{proof}
  First, rewrite the the Fokker-Planck equation \eqref{eq: ct} as
  follows:
\begin{equation*}
0 = \partial_t \rho +\nabla\cdot(\rho b)-\beta^{-1} \tau \Delta\rho = \partial_t \rho+\nabla\cdot(\rho (b-\beta^{-1} \tau \nabla \delta \entropy)) \,.
\end{equation*}
where we notice the fact that $\delta\mathcal{H}(\rho)=\log\rho+1$, and $\nabla\cdot(\rho\nabla\delta\mathcal{H}(\rho))=\nabla\cdot(\rho\nabla\log\rho)=\nabla\cdot(\nabla\rho)=\Delta\rho$. 

Denote $v = b - \beta^{-1} \tau  \nabla \delta \entropy$, and let $m = \rho v$, then \eqref{eq: ct} reduces to \eqref{ct2}. Next, we shall show that with the above definition of $m$, the cost functional in \eqref{SBPN} is the same as that in \eqref{eq:SB}. Indeed,
\begin{equation*}
\begin{split}
\int_0^1\int_{\Omega}\|b(t,x)\|^2\rho(t,x)\rd x\rd t=&\int_0^1 \int_{\Omega}\|v(t,x) + \beta^{-1} \tau \nabla \delta \mathcal{H}(\rho)(t,x)\|^2\rho(t,x)\rd x\rd t\\
=&\int_0^1\int_{\Omega}\{\|v\|^2\rho+\beta^{-2} \tau^2 \|\nabla \delta  \entropy\|^2\rho +2\beta^{-1} \tau \rho v\cdot\nabla \delta  \entropy\} \rd x\rd t\\
=&\int_0^1\int_\Omega\{ \frac{\|m\|^2}{\rho}+\beta^{-2} \tau^2  \|\nabla \delta  \entropy\|^2\rho+ 2\beta^{-1} \tau m\cdot \nabla \delta  \entropy\} \rd x\rd t.
\end{split}
\end{equation*}
We then show that last term in the above equation only depends on the initial and final condition. This is seen from the fact that 
\begin{equation}\label{term}
\begin{split}
&\int_0^1\int_{\Omega} m\cdot\nabla \delta  \entropy \rd x\rd t\\ 
=&-\int_0^1 \int_{\Omega}\delta  \entropy\nabla\cdot m \rd x \rd t \hspace{2.8cm} \textrm{Integration by parts w.r.t. $x$}  \\
=&\int_0^1\int_{\Omega} \delta  \entropy \partial_t \rho \rd x \rd t\\
=&\int_0^1 \frac{d}{dt}\mathcal{H}(\rho)\rd t = \mathcal{H}(\rho^1)-\mathcal{H}(\rho^0),
\end{split}
\end{equation}
where the second last equality comes from the definition of $L^2$
first variation. The other direction of the equivalence follows
similarly.
\end{proof}

Here, the symmetric version of Schr{\"o}dinger bridge problem relates to the optimal control problem of gradient flows. See related geometric studies in \cite{LegerLi2019_hopfcoleb, LiG}.
The additional term
\begin{equation*}
\mathcal{I}(\rho)=\int_\Omega \left\| \nabla \delta  \entropy \right\|^2\rho(x)\rd x=\int_\Omega \|\nabla\log\rho(x)\|^2\rho(x) \rd x
\end{equation*}
in the cost functional is named the Fisher information. 
In the sequel, we will apply the symmetric SBP to compute the Wasserstein gradient flow with $\mathcal I (\rho)$ serving as a regularization. The numerical benefits of this regularization will be discussed in Section \ref{section3}.

\subsection{Energy splitting and time discretization}\label{section2}
We are now ready to derive the main scheme \eqref{MP} of this paper. Starting from the classical JKO formulation \eqref{classicalJKO} of the Wasserstein gradient flow \eqref{gd}, we split the energy into two parts
\[
\energy(\rho) = \left( \energy(\rho) - \beta^{-1} \entropy(\rho)\right) + \beta^{-1} \entropy(\rho) := \energy_1 (\rho) + \energy_2(\rho)\,,
\]
and move $\energy_2$ to the flow constraint in \eqref{classicalJKO}. Here $\entropy$ is the entropy $\int_\Omega \rho \log \rho \rd x$ defined above. More specifically, given $\rho^{k}(x)$, we update $\rho^{k+1}(x) := \rho(1,x)$ by solving the following new form for $\rho(1,x)$:
\begin{equation} \label{JKOnew1}
\begin{dcases}
&(\rho, m) = \arg\inf_{\rho,m}~ \int_0^1 \int_\Omega \frac{\|m(t,x)\|^2}{\rho(t,x)}  \rd x \rd t  + 2\tau\energy_1(\rho(1,\cdot))
\\ & \textrm{s.t.} \quad  \partial_t \rho + \nabla \cdot m =  \tau \nabla \cdot (\rho \nabla \delta \energy_2(\rho)) = \tau \beta^{-1} \Delta \rho, 
\\ & \hspace{1cm}  \rho(0,x) = \rho^k(x)\,, \quad  (m - \tau \beta^{-1} \nabla \rho)\cdot \nu = 0\,.
\end{dcases}
\end{equation}
Intuitively, the difference between \eqref{classicalJKO} and \eqref{JKOnew1} lies in the flow of $\rho(t,x)$, $0<t<1$, between $\rho^k$ and $\rho^{k+1}$. In \eqref{classicalJKO}, the flow $\rho(t,x)$ is purely convective, and one controls it at final time $t=1$ using full energy $\energy$; whereas in \eqref{JKOnew1}, the diffusion effect in full energy (i.e., $\energy_2$) is moved to modify the flow so that the flow is both convective and diffusive, and therefore one only need to control its partial energy $\energy_1$ at the final time. Moreover, we observe that, the flux for $\rho(1,x)$ in this new form is the same as that in the original form \eqref{classicalJKO}. Since \eqref{classicalJKO} provides a first order approximation of $\rho(t,x)$ that resembles backward Euler scheme, the equivalence in the flux implies that \eqref{JKOnew1} is also a first order approximation in terms of $\tau$. Indeed, we rewrite \eqref{classicalJKO} using the Lagrangian multipliers 
\begin{align*}
\mathcal{L}_1 (\rho, m, \phi) &= \int_0^1 \int_\Omega \frac{\|m\|^2}{\rho} + \phi (\partial_t \rho + \nabla \cdot m) \rd x \rd t + 2\tau \energy(\rho(1, \cdot))
\\ & = \int_0^1 \int_\Omega \frac{\|m\|^2}{\rho} - \rho \partial_t \phi  - m \cdot \nabla \phi  \rd x \rd t + \int_\Omega \rho \phi \big|_{t=0}^{t=1}\rd x  + 2\tau \energy(\rho(1,\cdot))\,,  
\end{align*}
then the optimality condition $ \delta_{\rho, \phi, \rho(1,\cdot)}  \mathcal{L}_1= 0$ leads to 
\begin{align}
-\frac{\|m\|^2}{\rho^2} - \partial_t \phi = 0, \quad \frac{2m}{\rho} - \nabla \phi = 0 ,\quad  \phi(1,x) + 2\tau \delta  \energy(\rho(1,\cdot)) = 0\,.
\end{align}
Therefore $\phi$ satisfies the Hamilton-Jacobi equation $\partial_t \phi + \frac{1}{4} |\nabla \phi|^2  = 0$, and 
\begin{equation} \label{531}
m(1,x) = -\tau \rho(1, x) \nabla \delta  \energy(\rho(1,\cdot))\,.
\end{equation}
Plugging it into the constraint PDE in \eqref{classicalJKO}, one gets the flux for the original gradient flow equation \eqref{gd} after one time step $\tau$. Similarly, we rewrite \eqref{JKOnew1} as
\begin{align*}
\mathcal{L}_2 (\rho, m, \phi) &= \int_0^1 \int_\Omega \frac{\|m\|^2}{\rho} + \phi (\partial_t \rho + \nabla \cdot m - \tau \nabla \cdot (\rho \nabla \delta  \energy_2)) \rd x \rd t + 2\tau \energy_1(\rho(1, \cdot))
\\ & = \int_0^1 \int_\Omega \frac{\|m\|^2}{\rho} - \rho \partial_t \phi  - m \cdot \nabla \phi + \tau \rho \nabla \phi \cdot \nabla \delta \energy_2  \rd x \rd t + \int_\Omega \rho \phi \big|_{t=0}^{t=1}\rd x  + 2\tau \energy_1(\rho(1,\cdot)), 
\end{align*}
then the optimality condition $\delta_{m, \rho(1,\cdot)} \mathcal{L}_2 = 0$ leads to 
\[
\frac{2m}{\rho} - \nabla \phi = 0, \quad \phi(1,x) + 2\tau \delta  \energy_2(\rho(1,\cdot)) = 0\,.
\]
Consequently, $m(1,x) =  -\tau \rho(1,x) \nabla \delta  \energy_2(\rho(1,\cdot))$, which substituting back into the constraint of \eqref{JKOnew1} leads to the same flux for $\rho(1,x)$ as in \eqref{531}.  

Next, we rewrite \eqref{JKOnew1} in line with Proposition~\ref{prop1}. Let $\tilde{m} = m - \tau\beta^{-1} \nabla \rho $ (so that $\partial_t \rho + \nabla \cdot \tilde{m} = 0$), and plug it into the objective function in \eqref{JKOnew1}, we have 
\begin{align}
& \int_0^1\!\int_\Omega  \left( \frac{\|\tilde{m}\|^2}{\rho} + \tau^2 \beta^{-2} \frac{\|\nabla \rho\|^2}{\rho} + 2 \tau \beta^{-1} \frac{\tilde{m} \cdot  \nabla \rho}{ \rho}  \right) \rd t \rd x + 2 \tau \energy_2(\rho(1,\cdot)) \nonumber
\\  = & \int_0^1\!\int_\Omega \left( \frac{\|\tilde{m}\|^2}{\rho} + \tau^2 \beta^{-2} \rho \left\| \nabla \delta  \entropy\right\|^2 + 2 \tau \beta^{-1} \tilde{m} \cdot \nabla \delta  \entropy  \right) \rd t \rd x + 2 \tau \energy_2(\rho(1,\cdot)) \nonumber
\\  = & \int_0^1\!\int_\Omega \left( \frac{\|\tilde{m}\|^2}{\rho} + \tau^2 \beta^{-2} \rho \left\|  \nabla\delta  \entropy\right\|^2  \right) \rd t \rd x +  2\tau \beta^{-1} \left[ \entropy(\rho(1,\cdot)) - \entropy(\rho^k(x)) \right] + 2 \tau \energy_2(\rho(1,\cdot))  \nonumber
\\ = & \int_0^1\!\int_\Omega \left( \frac{\|\tilde{m}\|^2}{\rho} + \tau^2 \beta^{-2} \rho \left\|  \nabla\delta  \entropy\right\|^2  \right) \rd t \rd x + 2\tau \energy(\rho(1,\cdot))   -  2\tau \beta^{-1}  \entropy(\rho^k(x)) \,,
\label{530}
\end{align}
where the reformulation of the third term  in the second equation follows \eqref{term}. Omitting the tilde in \eqref{530}, \eqref{JKOnew1} can be reformulated as
\begin{equation} \label{JKOnew2}
\begin{dcases}
&\rho^{k+1}(x) = \arg\inf_{m, \rho}~ \int_0^1 \int_\Omega \frac{\|m(t,x)\|^2}{\rho(t,x)}  + \beta^{-2} \tau^2  \rho (t,x)\left\| \nabla \delta  \entropy (t,x)\right\|^2  \rd t \rd x  + 2\tau\energy(\rho(1,\cdot))
\\ & \quad \textrm{s.t.} \quad  \partial_t \rho + \nabla \cdot m = 0, ~ \rho(0,x) = \rho^k(x)\,, ~m \cdot \nu = 0. 
\end{dcases}
\end{equation}

In practice, we want to remove the additional dimension $t$ induced by
the flow, and therefore approximate the derivate in $t$ in the
constraint PDE of \eqref{JKOnew2} by a {\it one step} difference and
the integral in time in the objective function by a {\it one term}
quadrature. This leads to our main scheme \eqref{MP}. In the following
theorem, we show that, such an approximation does not violate the
first order accuracy of the original JKO scheme.

\begin{theorem}[Fisher information regularization scheme]\label{th3}
The minimizer of the variational problem \eqref{MP}
is a first-order time consistent scheme for Wasserstein gradient flow \eqref{gd}. 
\end{theorem}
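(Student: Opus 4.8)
The plan is to read off the first-order optimality (KKT) conditions of the one-step problem \eqref{MP}, extract the induced momentum $m$, substitute it into the flow constraint, and match the resulting update against the Taylor expansion of the exact flow \eqref{gd}. I would introduce a Lagrange multiplier $\phi(x)$ for the constraint $\rho-\rho^k+\nabla\cdot m=0$ and form
\[
\mathcal{L}(\rho,m,\phi)=\int_\Omega\Big(\frac{\|m\|^2}{\rho}+\beta^{-2}\tau^2\rho\|\nabla\log\rho\|^2\Big)\rd x+2\tau\energy(\rho)+\int_\Omega\phi\,(\rho-\rho^k+\nabla\cdot m)\,\rd x.
\]
Varying in $m$ and using $m\cdot\nu=0$ gives $m=\tfrac12\rho\nabla\phi$, while varying in $\rho$ gives $\phi=\frac{\|m\|^2}{\rho^2}-\beta^{-2}\tau^2\,\delta\mathcal{I}(\rho)-2\tau\,\delta\energy(\rho)$, with $\mathcal{I}(\rho)=\int_\Omega\|\nabla\log\rho\|^2\rho\,\rd x$ the Fisher information. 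This mirrors the Lagrangian computation for $\mathcal{L}_2$ preceding \eqref{JKOnew2}, now specialized to the single-step setting in which $\rho$ is simply $\rho^{k+1}$, so no intermediate flow is needed.

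The next step is size bookkeeping in $\tau$. The key point is that on the minimizer the momentum is small, $m=O(\tau)$; granting this, the kinetic term $\|m\|^2/\rho^2$ and the Fisher correction $\beta^{-2}\tau^2\delta\mathcal{I}(\rho)$ are both $O(\tau^2)$, so the multiplier collapses to $\phi=-2\tau\,\delta\energy(\rho^{k+1})+O(\tau^2)$. Taking gradients (the correction remains $O(\tau^2)$ since $m$ and its derivatives are $O(\tau)$ and $\rho^{k+1}$ is smooth and bounded away from $0$) yields
\[
m=-\tau\,\rho^{k+1}\nabla\delta\energy(\rho^{k+1})+O(\tau^2),
\]
the exact analogue of \eqref{531}. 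Substituting into the constraint produces
\[
\rho^{k+1}-\rho^k=\tau\,\nabla\cdot\big(\rho^{k+1}\nabla\delta\energy(\rho^{k+1})\big)+O(\tau^2),
\]
a backward-Euler step for $\partial_t\rho=\nabla\cdot(\rho\nabla\delta\energy(\rho))$. Finally I would compare this with $\rho(\tau,\cdot)-\rho^k=\tau\,\nabla\cdot(\rho^k\nabla\delta\energy(\rho^k))+O(\tau^2)$, obtained by Taylor-expanding the exact solution of \eqref{gd}; since $\rho^{k+1}=\rho^k+O(\tau)$ the two right-hand sides agree up to $O(\tau^2)$, so the local truncation error is $O(\tau^2)$ and the scheme is first-order consistent.

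The main obstacle is promoting the formal bound $m=O(\tau)$ to something rigorous and controlling the remainders uniformly enough to differentiate them. A comparison argument only gives a first, weaker estimate: the competitor $(\rho,m)=(\rho^k,0)$ is admissible, so the optimal cost is at most $\beta^{-2}\tau^2\mathcal{I}(\rho^k)+2\tau\energy(\rho^k)$, and if $\energy$ is bounded below this forces $\int_\Omega\|m\|^2/\rho\,\rd x=O(\tau)$, i.e. merely $m=O(\sqrt{\tau})$. This must then be bootstrapped through the optimality system: with $m=O(\sqrt{\tau})$ the kinetic term in $\phi$ is $O(\tau)$, hence $\phi=O(\tau)$, whence $m=\tfrac12\rho\nabla\phi=O(\tau)$, closing the estimate at the right order. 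Carrying the bootstrap out pointwise, rather than in an integrated sense, requires elliptic regularity for the coupled system $\rho-\rho^k+\nabla\cdot(\tfrac12\rho\nabla\phi)=0$ together with the $\rho$-equation, which is itself second order through the $\Delta\rho/\rho$ term in $\delta\mathcal{I}$ and is precisely where the Fisher regularization supplies the ellipticity.

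It also leans on the positivity and boundedness of $\rho^{k+1}$ furnished by Theorem~\ref{thm-property}, so that $1/\rho$, $\nabla\log\rho$ and $\delta\mathcal{I}$ remain controlled. Granting sufficient smoothness of $\rho^k$, these ingredients legitimize both the Taylor expansion of the exact flow and the gradient bounds on the $O(\tau^2)$ remainders; this analytic step is the true cost behind the otherwise short formal consistency computation.
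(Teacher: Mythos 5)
Your proposal is correct and follows essentially the same route as the paper's own proof: introduce a Lagrange multiplier for the discrete continuity constraint, read off the optimality conditions $m=\tfrac12\rho\nabla\phi$ and $\phi=\frac{\|m\|^2}{\rho^2}-\beta^{-2}\tau^2\delta\mathcal{I}(\rho)-2\tau\delta\energy(\rho)$ (the paper's version differs only by a $1/(2\tau)$ rescaling of the Lagrangian and hence of $\phi$), drop the $O(\tau^2)$ terms to get $m=-\tau\rho^{k+1}\nabla\delta\energy(\rho^{k+1})+O(\tau^2)$, and substitute into the constraint to recover a backward-Euler-type update for \eqref{gd}. Your additional bootstrap argument upgrading $m=O(\sqrt{\tau})$ to $m=O(\tau)$, and the appeal to positivity from Theorem~\ref{thm-property}, supply rigor for the size bookkeeping that the paper's proof simply asserts, but the underlying scheme of the argument is identical.
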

\begin{proof}
First it is straightforward to check that variational problem \eqref{MP} is strictly convex. We then solve it by the Lagrange multiplier method. Define the Lagrangian as:   
\begin{equation*}
\mathcal{L}(\rho, \phi, m)= \mathcal{E}(\rho)+\int_\Omega \Big\{ \frac{1}{2\tau} \left[ \frac{\|m\|^2}{\rho}+\beta^{-2}\tau^2\|\nabla\delta\mathcal{H}\|^2\rho \right]+\phi(\rho-\rho^k)+\phi\nabla\cdot m\Big\}\rd x.
\end{equation*}
The critical solution of above variation problem forms 
\begin{equation*}
\delta_{m,\rho,\phi}\mathcal{L}=0\Rightarrow\left\{
\begin{aligned}
&\frac{1}{\tau}\frac{m}{\rho}= \nabla\phi \,,\\
&\delta\mathcal{E}(\rho)-\frac{1}{2\tau}\frac{\|m\|^2}{\rho^2}+ \half\beta^{-2}\tau\delta \int_\Omega \|\nabla\delta\mathcal{H}\|^2\rho \rd x+\phi=0 \,,\\
&\rho-\rho^k+\nabla\cdot m=0\,.
\end{aligned}\right.
\end{equation*}
The solution of above system $(m, \rho)$ satisfies
\begin{equation*}
\left\{
\begin{split}
&m= \tau\rho\nabla\phi\,, \\
&\phi=-\delta\mathcal{E}(\rho)+\frac{\tau}{2}\|\nabla\phi\|^2- \half\beta^{-2}\tau\delta \int_\Omega \|\nabla\delta\mathcal{H}\|^2\rho \rd x \,,\\
&\rho=\rho^k-\tau\nabla\cdot (\rho \nabla\phi).
\end{split}\right.
\end{equation*}
Denote the solution of above system as follows: $\rho=\rho^{k+1}$. 
We then derive the following update  
\begin{equation*}
\rho^{k+1}= \rho^k + \nabla \cdot m =\rho^k+ \tau \nabla\cdot(\rho^{k+1}\nabla\delta\mathcal{E}(\rho^{k+1}))+O(\tau^2).
\end{equation*}
Therefore scheme \eqref{MP} is a first order time discretization. 
\end{proof}
It is worth mentioning that there are several cases that the Fisher information regularization schemes are {\it exact} for the computation of gradient flows. 
\begin{proposition}[Exact cases]
If $\beta = \sqrt{\tau/2}$, then the iterative scheme \eqref{MP} is a first order scheme for the equation
\begin{equation*}
\partial_t\rho(t,x)=\nabla\cdot\Big(\rho\nabla\delta(\mathcal{I}(\rho)+\mathcal{E}(\rho))\Big).
\end{equation*}
\end{proposition}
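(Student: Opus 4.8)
The plan is to recycle the optimality system already derived in the proof of Theorem~\ref{th3} and simply track how the special choice $\beta=\sqrt{\tau/2}$ reshuffles the coefficients. Recall that the critical point equations for \eqref{MP} read
\[
m=\tau\rho\nabla\phi,\qquad
\phi=-\delta\energy(\rho)+\frac{\tau}{2}\|\nabla\phi\|^2-\half\beta^{-2}\tau\,\delta\mathcal{I}(\rho),\qquad
\rho=\rho^k-\tau\nabla\cdot(\rho\nabla\phi),
\]
where I have used $\int_\Omega\|\nabla\delta\entropy\|^2\rho\,\rd x=\mathcal{I}(\rho)$, so that the variation of the Fisher regularization term is exactly $\delta\mathcal{I}(\rho)$. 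Denote the solution by $\rho^{k+1}$. The only quantity distinguishing this proposition from Theorem~\ref{th3} is the scalar coefficient $\half\beta^{-2}\tau$ sitting in front of $\delta\mathcal{I}$.

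First I would substitute $\beta=\sqrt{\tau/2}$, i.e.\ $\beta^{-2}=2/\tau$, which yields the clean identity $\half\beta^{-2}\tau=1$. With this, the middle equation collapses to
\[
\phi=-\delta\energy(\rho^{k+1})-\delta\mathcal{I}(\rho^{k+1})+\frac{\tau}{2}\|\nabla\phi\|^2
=-\delta(\mathcal{I}+\energy)(\rho^{k+1})+\frac{\tau}{2}\|\nabla\phi\|^2.
\]
The conceptual point is that this particular $\tau$-dependent choice of $\beta$ promotes the Fisher term from the negligible $O(\tau)$ regularization it was in Theorem~\ref{th3} to a genuine leading-order contribution, which then merges with $\delta\energy$ into $\delta(\mathcal{I}+\energy)$.

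Next I would treat the leftover Hamilton--Jacobi term $\frac{\tau}{2}\|\nabla\phi\|^2$ exactly as in Theorem~\ref{th3}: assuming $\phi$ and hence $\nabla\phi$ stay bounded uniformly in $\tau$, it is $O(\tau)$, so $\phi=-\delta(\mathcal{I}+\energy)(\rho^{k+1})+O(\tau)$. Feeding this into the update $\rho^{k+1}=\rho^k-\tau\nabla\cdot(\rho^{k+1}\nabla\phi)$ and noting that the $O(\tau)$ error in $\phi$ is multiplied by the outer factor $\tau$, I obtain
\[
\rho^{k+1}=\rho^k+\tau\,\nabla\cdot\bigl(\rho^{k+1}\nabla\delta(\mathcal{I}+\energy)(\rho^{k+1})\bigr)+O(\tau^2),
\]
which is precisely the backward-Euler, first-order consistent discretization of $\partial_t\rho=\nabla\cdot(\rho\nabla\delta(\mathcal{I}+\energy)(\rho))$, establishing the claim.

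The main obstacle I anticipate is the same analytic gap left implicit in Theorem~\ref{th3}: rigorously justifying that the dropped term $\frac{\tau}{2}\|\nabla\phi\|^2$ contributes only $O(\tau^2)$ to the density update. This needs a uniform-in-$\tau$ a priori bound on $\nabla\phi$ (equivalently on the optimal velocity $m/\rho$) together with enough regularity of $\delta(\mathcal{I}+\energy)$ for the consistency expansion to be meaningful. Since $\delta\mathcal{I}$ carries fourth-order spatial derivatives of $\rho$, the smoothness assumptions required here are stronger than for the pure gradient flow in Theorem~\ref{th3}; I would carry out the estimate under the same formal smoothness hypotheses used there.
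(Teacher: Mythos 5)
Your proof is correct and takes essentially the same route as the paper: both hinge on the observation that $\beta=\sqrt{\tau/2}$ gives $\beta^{-2}\tau^2=2\tau$ (equivalently $\half\beta^{-2}\tau=1$), so the Fisher term acquires the same weight as the energy and merges into $\delta(\mathcal{I}+\mathcal{E})$, after which the consistency argument of Theorem~\ref{th3} applies verbatim. The only difference is organizational: the paper absorbs the Fisher term at the level of the objective functional, rewriting the scheme as the one-step JKO for $\mathcal{I}+\mathcal{E}$ and then citing Theorem~\ref{th3}, whereas you perform the same merge inside the Euler--Lagrange system derived in that theorem's proof.
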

\begin{proof}
If $\beta=\sqrt{\tau/2}$, then $\beta^{-2}\tau^2 = 2 \tau$, the scheme \eqref{MP} becomes 
\begin{equation}\label{scheme001}
\begin{split}
\rho^{k+1}(x)=&\arg\inf_{\rho, m}~\int_\Omega \Big(\frac{\|m\|^2(x)}{\rho(x)}+2\tau\|\nabla\delta\mathcal{H}(\rho)(x)\|^2\rho(x)\Big)\rd x+2\tau \mathcal{E}(\rho)\\
&\textrm{s.t.}\quad\quad \rho(x)-\rho^k(x)+\nabla\cdot m(x)=0.
\end{split}
\end{equation}
Following Theorem \ref{th3}, the algorithm is a consistent first time discretization of Wasserstein gradient flow for functional $\mathcal{I}(\rho)+\mathcal{E}(\rho)$.
\end{proof}

Several remarks are in order. 
\begin{remark}[Comparison with the classical JKO scheme]
Compared with the classical approach of JKO \eqref{classicalJKO}, our method does not require any inner time interpolation in the underlying dynamical formulation. It still preserves the first order time accuracy of the time discretization. 
\end{remark}

\begin{remark}[Schr{\"o}dinger Bridge problem proximal]
The variational problem \eqref{JKOnew2} can be viewed as a Schr{\"o}dinger bridge proximal method of Wasserstein gradient flows.
\end{remark}

\begin{remark}[Comparison with entropic regularization of gradient flow \cite{Peyre2015_entropic}]
We also compare our method with the entropic gradient flow studied in \cite{CarlierDuvalPeyreSchmitzer2015_convergence,Peyre2015_entropic}. A known fact is that when $\mathcal{H}(\rho)=\int \rho(x)\log\rho(x)\rd x$, the SBP problem has the static formulation \cite{dataSBP}
\begin{equation*}
\textrm{SBP}(\rho^0,\rho^1)=\inf_{\pi}\int\int \Big(\alpha \pi(x,y)\log\pi(x,y)+\|x-y\|^2\pi(x,y)\Big)\rd x\rd y,
\end{equation*}
 where $\alpha\geq 0$ is a constant and the infimum is over all joint histogram $\pi(x,y)\geq 0$ with marginals $\rho^0(x)$, $\rho^1(y)$. In \cite{CarlierDuvalPeyreSchmitzer2015_convergence,Peyre2015_entropic}, the algorithm applies the above static formulation and considers the iterative regularization algorithm for the computation of gradient flow. Our formulation mainly uses the dynamical formulation of SBP, especially its time symmetric version in Proposition \ref{prop1}.

\end{remark}

\begin{remark}[Generalized regularization functional]
Besides using $\mathcal{H}(\rho)=\int \rho\log\rho \rd x$, we can also study other types of regularizations, e.g., $\mathcal{H}(\rho)=\frac{1}{(1-\gamma)(2-\gamma)}\int (\rho^{2-\gamma}-1)\rd x$. We leave these studies for future works.
\end{remark}


\section{Full discretization and optimization algorithm}\label{section3}
In this section, we detail the spatial discretization and provide a complete algorithm for the fully discrete problem. The underlying principle for spatial discretization is to preserve the structure of Wasserstein metric tensor in the discrete sense so that it can be easily adapted to unstructured grid and more complicated equations with energy involving high order derivatives. Thanks to the Fisher information regularization, the resulting optimization is strictly convex and therefore gives access to second order Newton type optimization algorithms. 

\subsection{Spatial Discretization}
To better explain the idea, we first consider the discretization in one spatial dimension on uniform grid. Let $[0,L]$ be the computational domain and $\Delta x$ and $\tau$ be the spatial grid and temporal step respectively. Choose $ 0 = x_{\half} < x_{\frac{3}{2}} < \cdots < x_{N_x+\half} = L$, and define  
\begin{align*} 
& \rho_{j}^k = \rho( t_k, x_j),  \quad 1 \leq j \leq N_x, ~ k \in \mathbb{N}_+\,;
\\ & m_{j+\half}^k = m( t_k, x_{j+\half}),  \quad 0 \leq j \leq N_x, ~ k \in \mathbb{N}_+\,,
\end{align*}
where $x_j = j \Delta x,~ x_{j+\half} = (j+\half)\Delta x$, and $t_k = k \tau$. Note first that $m_{\half}^k =  m_{N_x+\half}^k = 0$ from the boundary condition, then the cost function in scheme \eqref{MP} can be discretized as 
\begin{align}
F(\boldsymbol{\rho}, \bold{m}) =&   \sum_{j=1}^{N_x-1} \left[ \frac{m_{j+\half }^2}{\rho_{j+\half }} + \beta^{-2}\tau^2 (\nabla \log \rho)_{j+\half }^2 \rho_{j+\half } \right] \Delta x  + 2 \tau \energy(\vecrho) \nonumber
\\ =& \sum_{j=1}^{N_x-1}  \left[ \frac{2 m_{j+\half }^2}{\rho_{j} + \rho_{j+1}} + \frac{\beta^{-2}\tau^2}{\Delta x^2} (\log \rho_{j+1} -\log \rho_{j})^2 \frac{ \rho_{j+1} + \rho_{j}}{2}  \right] \Delta x  + 2 \tau \energy(\vecrho)\,, \label{F1D}
\end{align}
where $\mathcal{E}(\boldsymbol{\rho}) $ in its general form reads
\[
\mathcal{E}(\boldsymbol{\rho}) = \sum_{j=1}^{N_x} [U(\rho_j) + V_j \rho_j ] \Delta x + \frac{1}{2} \sum_{j,l=1}^{N_x} W_{j,l} \rho_j \rho_l \Delta x^2\,.
\]
Here $\boldsymbol{\rho}$ and $\bold{m}$ are vector representations of vectors $\rho_{j}$ and $m_{j}$, i.e., $\vecrho = (\rho_1, \rho_2, \cdots \rho_{N_x})$, and $\bold{m} = (m_{1/2}, m_{3/2}, \cdots, m_{N_x+1/2})$.
The constraint is discretized with center difference in space as follows
\begin{eqnarray} 
\rho_{j} - \rho_{j}^k + \frac{1}{ \Delta x } (m_{j+\half} - m_{j-\half}) = 0, \quad 1\leq j \leq N_x, 
\label{constraint1D}
\end{eqnarray}
and the zero boundary conditions $m_{\half} = m_{N_x+\half} = 0$ is applied. 

Extension to two dimension is straightforward. Denote
\begin{align*} 
& \rho_{j,l}^k = \rho( t_k, x_j, y_l),  \quad 1 \leq j, l \leq N_x, ~~ k \in \mathbb{N}_+\,;
\\ & (m_x)_{j+\half,l}^k = m_x( t_k, x_{j+\half}, y_{l}),  \quad 0 \leq j, \leq N_x, ~~1\leq l \leq N_y,~~ k \in \mathbb{N}_+\,;
\\& (m_y)_{j,l+\half}^k = m_y( t_k, x_{j}, y_{l+\half}),  \quad 1 \leq j, \leq N_x,  ~~0\leq l \leq N_y,
    ~~ k \in \mathbb{N}_+\,,
\end{align*}
then the no-flux boundary condition on $m$ are imposed dimension by dimension, i.e.,  
\[
(m_x)_{\half,l} = (m_x)_{N_x + \half,l} = 0, ~~1 \leq l \leq N_y; \quad 
(m_y)_{j,\half} = (m_x)_{j, N_y + \half} = 0, ~~1 \leq j \leq N_x\,.
\]
The cost function then writes
\begin{align}
F(\boldsymbol{\rho}, \bold{m}_x, \bold{m}_y) = & \sum_{j=1}^{N_x-1}  \sum_{l = 1}^{N_y} \left[ \frac{2 (m_x)_{j+\half ,l }^2}{\rho_{j,l} + \rho_{j+1,l}} + \frac{\beta^{-2}\tau^2}{\Delta x^2} (\log \rho_{j+1,l} -\log \rho_{j,l})^2 \frac{ \rho_{j+1,l} + \rho_{j,l}}{2}  \right] \Delta x^2  \nonumber
\\  &  + \sum_{j=1}^{N_x}  \sum_{l = 1}^{N_y-1} \left[ \frac{2 (m_y)_{j ,l+\half }^2}{\rho_{j,l} + \rho_{j,l+1}} + \frac{\beta^{-2}\tau^2}{\Delta x^2} (\log \rho_{j,l+1} -\log \rho_{j,l})^2 \frac{ \rho_{j,l+1} + \rho_{j,l}}{2}  \right] \Delta x^2  \nonumber
\\& + 2 \tau \mathcal{E}(\boldsymbol{\rho}) \,, \label{F2D}
\end{align}
and the constraint becomes
\begin{align}
&\rho_{j,l} - \rho_{j,l}^k + \frac{1}{\Delta x} [(m_x)_{j+\half,l} - (m_x)_{j-\half,l}] + \frac{1}{\Delta y} [(m_x)_{j,l+\half} - (m_x)_{j,l-\half}] = 0, \nonumber
\\ &\hspace{9cm} 1\leq j \leq N_x , 1 \leq l \leq N_y\,. \label{constraint2D}
\end{align}
Generalization on graphs can be found in related studies \cite{LiSBP2,LiSBP}.

Upon spatial discretization, we therefore have the following finite dimensional variational problem: 
\begin{equation}\label{new_form3}
\begin{dcases}
& \min_{\vecrho \geq 0, \vecm} F(\vecrho, \vecm) := \sum_{\veci} \left[\frac{m^2_{\veci+\frac{1}{2} \vece}}{(\rho_{\veci}+\rho_{\veci+\vece})/2}+\frac{\beta^{-2}\tau^2}{{\bf \Delta x_\veci}^2}(\log \rho_{\veci}-\log \rho_{\veci+\vece})^2\frac{\rho_{\veci}+\rho_{\veci + \vece}}{2} \right] {\bf\Delta x} + 2\tau \energy(\vecrho) \\
& \text{s.t}\quad\quad   
\rho_{\veci}-\rho^k_{\veci}+\sum_{\bf e_i}\frac{1}{\bf \Delta x_i} (m_{\veci +\frac{1}{2}\vece}-m_{\veci-\frac{1}{2}\vece})=0\,.
\end{dcases}
\end{equation}
Here $\veci $ is a vector of sub index (e.g., $\veci = (j,l)$ in two dimension), ${\bf \Delta x}_\veci = \Delta x$ or $\Delta y$, ${\bf \Delta x} = \Pi_{\veci} {\bf \Delta x}_\veci$. Written in this way, the discretization can be directly generalized to unstructured grid. 

\begin{remark}
In practice, we will impose an non-negativity of $\rho: \rho_\veci \geq 0$ to avoid unexpected negative solution when the optimization is not fully converged, i.e., the iteration terminates when the stopping criteria is met. However, as we will show in Theorem~\ref{thm-property}, the non-negativity shall be preserved when the underlying optimization is solved exactly. 
\end{remark}

Denote its minimizer as $(\vecrho^*, \vecm^*)$, then
$\vecrho^{k+1} = \vecrho^*$.  We study the property of problem
\eqref{new_form3}. Note that the constraints contain both equalities
and inequalities, we will demonstrate that the Fisher information
regularization plays the crucial role of penalty function, which
enforces the density solution staying in the interior of probability
simplex. We next prove several properties of the proposed algorithm.
\begin{theorem} \label{thm-property} For each $k\in \mathbb{N}_+$, the
  following properties hold for scheme \eqref{new_form3}:
\begin{itemize}
\item[(i)] There exists a unique minimizer $\rho^{k+1}$ for the problem;
\item[(ii)]The modified energy decays 
\begin{equation*}
\frac{\beta^{-2}\tau}{2}\mathcal{I}(\rho^{k+1})+\mathcal{E}(\rho^{k+1})\leq \frac{\beta^{-2}\tau}{2}\mathcal{I}(\rho^k)+\mathcal{E}(\rho^k);
\end{equation*}
\item[(iii)] There exists a constant $c>0$, such that 
\begin{equation*}
\min_{j}\rho_j^{k+1}>c>0; 
\end{equation*}
\item[(iv)] The total mass is conserved 
\begin{equation*}
\sum_{j}\rho_j^{k+1}=\textrm{Constant}.
\end{equation*}
\end{itemize}
\end{theorem}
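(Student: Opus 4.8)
The plan is to handle the four claims in an order that lets the later ones reuse the structure built for the earlier ones: I would first establish mass conservation and the convexity/compactness of \eqref{new_form3}, and then read off positivity and energy decay from a comparison with the \emph{stationary competitor} $(\rho^k,0)$. Throughout, $F$ denotes the discrete objective in \eqref{new_form3} and $(\vecrho^{k+1},\vecm^*)$ its minimizer. For part (iv), I would simply sum the discrete constraint over all $\veci$: the divergence $\sum_{\veci}\sum_{\vece}\frac{1}{\Delta x_\veci}(m_{\veci+\frac12\vece}-m_{\veci-\frac12\vece})$ telescopes and vanishes thanks to the no-flux boundary values $m_{1/2}=m_{N_x+1/2}=0$ and their analogues in each direction, so $\sum_\veci\rho_\veci=\sum_\veci\rho^k_\veci$, which by induction equals $\sum_\veci\rho^0_\veci$. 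Since this holds for \emph{every} feasible point and $\rho\ge 0$, all admissible densities lie in a fixed bounded simplex; in particular they are uniformly bounded, a fact I will reuse for compactness.

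For existence in part (i), I would show $F$ is convex, lower semicontinuous, and coercive on the feasible set. Convexity of the kinetic term $m^2/\big((\rho_\veci+\rho_{\veci+\vece})/2\big)$ is the joint convexity of the perspective map $(m,z)\mapsto m^2/z$ precomposed with a linear map. For the discrete Fisher term $g(a,b)=(\log a-\log b)^2\frac{a+b}{2}$ I would directly compute the $2\times 2$ Hessian: writing $\ell=\log(a/b)$ and $P=(a+b)+\ell(a-b)>0$, one finds the Hessian equals $\frac{P}{a^2b^2}\begin{pmatrix} b^2 & -ab\\ -ab & a^2\end{pmatrix}$, which is positive semidefinite (rank one, degenerate only along the scaling direction $(a,b)$); summing over edges gives convexity of $\mathcal{I}$, and convexity of $\mathcal{E}$ follows from convexity of $U$ and positive semidefiniteness of the interaction kernel $W$. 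For coercivity, on a sublevel set $\{F\le F(\rho^k,0)\}$ the densities are bounded by the simplex constraint above, and each momentum is bounded because the kinetic bound yields $m_{\veci+\frac12\vece}^2\le C\,(\rho_\veci+\rho_{\veci+\vece})$ for some constant $C$; hence sublevel sets are compact and a minimizer exists by Weierstrass. Uniqueness then follows from strict convexity of $F$, already noted for \eqref{MP} in the proof of Theorem~\ref{th3} and inherited by the discretization.

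Parts (iii) and (ii) I would obtain together from the competitor $(\rho^k,0)$, which is feasible (taking $m=0$ forces $\rho=\rho^k$ and respects the no-flux condition) and, inductively assuming $\rho^k>0$ (true at $k=0$ for positive initial data), has finite objective. For positivity, suppose some $\rho^{k+1}_j=0$. Because the total mass is positive and the grid is connected, the zero set must share an edge with the positive set, and along that edge the Fisher contribution $(\log\rho_\veci-\log\rho_{\veci+\vece})^2\frac{\rho_\veci+\rho_{\veci+\vece}}{2}\to+\infty$ as the vanishing entry approaches $0$, contradicting $F(\rho^{k+1},\vecm^*)\le F(\rho^k,0)<\infty$. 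Hence every $\rho^{k+1}_j>0$ and $c:=\min_j\rho^{k+1}_j>0$. For energy decay, I would use $F(\rho^{k+1},\vecm^*)\le F(\rho^k,0)$, drop the nonnegative kinetic term on the left, note that $m=0$ annihilates the kinetic term on the right, and divide by $2\tau$; identifying $\frac{1}{2\tau}\big(\beta^{-2}\tau^2\mathcal{I}+2\tau\mathcal{E}\big)=\frac{\beta^{-2}\tau}{2}\mathcal{I}+\mathcal{E}$ on both sides yields exactly the claimed inequality $\frac{\beta^{-2}\tau}{2}\mathcal{I}(\rho^{k+1})+\mathcal{E}(\rho^{k+1})\le\frac{\beta^{-2}\tau}{2}\mathcal{I}(\rho^k)+\mathcal{E}(\rho^k)$.

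The genuinely routine parts are (iv), (ii), and (iii); the real work sits in (i). The non-obvious structural point is the convexity of the \emph{logarithmic} discretization of the Fisher information, which must be verified through the edgewise Hessian rather than inherited automatically from the convexity of the continuous functional, together with the compactness argument that controls the a priori unbounded momentum $\vecm$ via the mass budget. The hardest piece will be strict convexity, hence uniqueness: since the energy $\mathcal{E}$ need not be strictly convex and the Fisher term degenerates along scaling directions, one must argue that its combination with the strictly-convex-in-$m$ kinetic term, subject to the affine constraint $\rho=\rho^k-\operatorname{div} m$, leaves no flat direction within the feasible set.
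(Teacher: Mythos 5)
Your overall strategy coincides with the paper's: part (iv) by telescoping the discrete divergence against the no-flux boundary values, parts (ii)--(iii) by comparing the minimizer with the feasible competitor $(\rho^k,0)$ and using the logarithmic blow-up of the discrete Fisher term on a connected grid, and part (i) by convexity of the perspective (kinetic) term plus an edgewise Hessian computation for the Fisher term. Your per-edge Hessian is in fact identical to the paper's: with $t=(a-b)\log(a/b)+(a+b)$, your matrix $\frac{t}{a^2b^2}\begin{pmatrix} b^2 & -ab\\ -ab & a^2\end{pmatrix}$ reproduces the paper's entries $t/a^2$, $t/b^2$, $-t/(ab)$, with quadratic form $t\left(\frac{\sigma_a}{a}-\frac{\sigma_b}{b}\right)^2$. (Incidentally, your orientation of the comparison inequality in (ii) is the correct one; both displayed inequalities in the paper's proof of (ii) have their sides transposed, evidently a typo, since the stated conclusion matches your direction.)

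The genuine gap is exactly where you flag it: uniqueness. You invoke ``strict convexity of $F$, already noted for \eqref{MP} in the proof of Theorem~\ref{th3} and inherited by the discretization,'' but the continuous statement is itself only asserted there, and strict convexity is not inherited automatically --- indeed $F$ is \emph{not} strictly convex on all of $(\rho,m)$-space: $\mathcal{E}$ may be merely convex, and each edge Hessian of the Fisher term is rank one with kernel along the scaling direction. The paper closes this with an argument whose ingredients you have already assembled but never combine. First, feasibility forces mass conservation (your part (iv)), so any difference $\sigma$ of two feasible densities satisfies $\sum_i\sigma_i=0$. Second, if the assembled Fisher quadratic form vanishes, i.e.\ $\frac12\sum_{(i,j)\in E}t_{ij}\left(\frac{\sigma_i}{\rho_i}-\frac{\sigma_j}{\rho_j}\right)^2=0$, then connectivity of the grid gives $\sigma_i=c\,\rho_i$ for all $i$; since every $\rho_i>0$ and the total mass is positive, $\sum_i\sigma_i=0$ forces $c=0$, hence $\sigma=0$. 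Third, with $\sigma=0$ the remaining form is $h^T\mathcal{K}_{mm}h$ with $\mathcal{K}_{mm}=\mathrm{diag}\bigl(4/(\rho_i+\rho_{i+e})\bigr)$ positive definite at interior points (interiority being your part (iii)), so $h=0$. Thus the Hessian of the kinetic-plus-Fisher part is positive definite on feasible directions at every interior point, which, together with convexity of $\mathcal{E}$, gives strict convexity of $F$ along the segment joining any two (necessarily interior) minimizers, and uniqueness follows. Without this step, your part (i) establishes existence only.
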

\begin{proof}
(i) The proof is based on the result of \cite{Li2018}. For the completeness of paper, we present it here. 
We shall show that 
\begin{itemize}
\item[(1)] The discrete Fisher information functional is shown to be positive infinity on the boundary of the probability set. Thus the minimizer of \eqref{new_form3} is obtained in the interior of simplex.
\item[(2)] The optimization problem \eqref{new_form3}  is strict convex in the interior of the constraint. 
\end{itemize} 
For notational convenience, we denote 
\begin{equation*}
\mathcal{K}(m,\rho)=\sum_{\veci } \frac{m^2_{\veci+\frac{\vece}{2}}}{(\rho_{\veci}+\rho_{\veci+\vece})/2}, \quad \mathcal{I}(m,\rho)=\sum_{\veci}\frac{1}{{\Delta x^2}}(\log \rho_{\veci}-\log \rho_{\veci+\vece})^2\frac{\rho_{\veci}+\rho_{\veci + \vece}}{2}.
\end{equation*}
We first show that the minimizer of \eqref{new_form3} in term of $\rho$ is strictly positive. This is true since $\mathcal{I}(\rho)$ is positive infinity on the boundary of simplex set, i.e.
\begin{equation*}
\lim_{\min_{i\in V}{ \rho_i}\rightarrow 0}\mathcal{I}(\rho)=+\infty,
\end{equation*}
where $V$ is the vertices set of the discretization. 
Suppose the above is not true, there exists a constant $M>0$, such that if there exists some $i^*\in V$, $\rho_{i^*}=0$, then
\begin{equation}\label{a}
\begin{split}
M\geq \mathcal{I}( \rho)=&\sum_{i+\frac{e_v}{2}\in E}\frac{1}{\Delta x^2}(\log \rho_i-\log \rho_{i+e_v})^2\frac{ \rho_i+ \rho_{i+e_v}}{2}\\
\geq& \sum_{i+\frac{e_v}{2}\in E}\frac{1}{\Delta x^2}(\log \rho_i-\log \rho_{i+e_v})^2 \frac{1}{2}\max\{ \rho_i,  \rho_{i+e_v}\}\,,
 \end{split}
 \end{equation} 
 where $E$ is the edge set of the discretization.
Notice that each term in \eqref{a} is non-negative, thus $$(\log \rho_i-\log \rho_{i+e_v})^2 \max\{ \rho_i,  \rho_{i+e_v}\}\leq 2M<+\infty,$$
for any edge$(i, i+e_v)\in E$. Since $ \rho_{i^*}=0$, the above formula further implies that for any $\tilde{\imath}\in N(i^*)$, $ \rho_{\tilde{i}}=0$. This is true since if $\rho_{i^*}\neq 0$, we have $$\lim_{ \rho_{i^*}\rightarrow 0}(\log \rho_{i^*}-\log \rho_{\tilde{\imath}})^2 \max\{ \rho_{i^*},  \rho_{\tilde{\imath}}\}=+\infty.$$ 
Similarly, we show that for any nodes $\tilde{\tilde{\imath}}\in N(\tilde{\imath})$, $\rho_{\tilde{\tilde{\imath}}}=0$. Here $N(\tilde{\imath})$ is the neighborhood of node $\tilde{\imath}$ in the discretization grids. We iterate the above steps a finite number of times.
Since the lattice graph is connected and the set $V$ is finite, we obtain $ \rho_i=0$, for any $i\in V$. This contradicts the assumption that $\sum_{i\in V} \rho_i=\textrm{Constant}$, which finishes the proof.

We now prove that $\mathcal{I}(\rho)$ is strictly convex in the variable $\rho$ with a constraint $\sum_{i\in V}\rho_i=\textrm{Constant}$, $\rho_i>0$, for any $i\in V$.  We shall show
\begin{equation}\label{claim3}
\min_{\sigma}~\{\sigma^T \mathcal{I}_{\rho\rho} \sigma~:~\sigma^T\sigma=1,~\sum_{i\in V}\sigma_i=0\}>0.
\end{equation}
Here $\mathcal{I}_{\rho\rho}=(\frac{\partial^2 \mathcal{I}(\rho)}{\partial \rho_i\partial \rho_j})_{i\in V, j\in V}\in\mathbb{R}^{|V|\times|V|} $, and $\sum_{i\in V}\sigma_i=0$ is the constraint for $\rho$ lying on the simplex set.
Notice the fact that \begin{equation}\label{Hessian_F}
\frac{\partial^2 \mathcal{I}(\rho) }{\partial \rho_i\partial \rho_j}=
\begin{cases}
-\frac{1}{\rho_i\rho_j}\frac{1}{\Delta x^2}t_{ij}&\textrm{if $j\in N(i)$}\ ;\\
\frac{1}{\rho_i^2}\sum_{k\in N(i)}\frac{1}{\Delta x^2}t_{ik} &\textrm{if $i=j$}\ ;\\
0& \textrm{otherwise},\\
\end{cases}
\end{equation}
where 
\begin{equation*}\label{tij}
t_{ij}=(\rho_i-\rho_j)(\log \rho_i-\log \rho_j)+(\rho_i+\rho_j)>0.
\end{equation*}
Hence
\begin{equation*}
\begin{split}
\sigma^T \mathcal{I}_{\rho\rho}(p) \sigma
=&\frac{1}{2}\sum_{(i,j)\in E}  t_{ij}\Big\{(\frac{\sigma_i}{\rho_i})^2+ (\frac{\sigma_j}{\rho_j})^2  -2 \frac{\sigma_i}{\rho_i} \frac{\sigma_j}{\rho_j}\Big\}\\
=&\frac{1}{2}\sum_{(i,j)\in E} t_{ij}(\frac{\sigma_i}{\rho_i}-\frac{\sigma_j}{\rho_j})^2\geq 0,
\end{split}
\end{equation*}
where $\frac{1}{2}$ is due to the convention that each edge $(i,j)\in E$ is summed twice. 

We next show that the strict inequality in \eqref{claim3} holds. Suppose \eqref{claim3} is not true, there exists a unit vector $\sigma^*$ such that 
\begin{equation*}
\sigma^{*T} \mathcal{I}_{\rho\rho} \sigma^*=\frac{1}{2}\sum_{(i,j)\in E} t_{ij}(\frac{\sigma_i^*}{\rho_i}-\frac{\sigma^*_j}{\rho_j})^2= 0.
\end{equation*}
Then $\frac{\sigma_1^*}{\rho_1}=\frac{\sigma_2^*}{\rho_2}=\cdots \frac{\sigma_n^*}{\rho_{|V|}}$.
Combining this with the constraint $\sum_{i\in V}\sigma_i^*=0$, we have  $\sigma_1^*=\sigma_2^*=\cdots=\sigma_{|V|}^*=0$, which contradicts that $\sigma^*$ is a unit vector.

Second, we show that
$\mathcal{K}(m,\rho)+\beta^{-2} \tau^2 \mathcal{I}(\rho)$ is strictly
convex in $(m,\rho)$. Notice that $(m,\rho)$ is in the interior of optimization domain, we
have $\rho_i>0$, thus the objective function is smooth. We shall show
that $\lambda(m,\rho)>0$, where \begin{equation}\label{claim4}
  \lambda(m,\rho):=\min_{h,\sigma}~
\begin{pmatrix}
h\\ \sigma
\end{pmatrix}^T
\left\{\begin{pmatrix}
\mathcal{K}_{mm} & \mathcal{K}_{m\rho}\\
\mathcal{K}_{\rho m} & \mathcal{K}_{\rho\rho}
\end{pmatrix}
+\beta^{-2} \tau^2\begin{pmatrix}
0 & 0\\
0 & \mathcal{I}_{\rho\rho}
\end{pmatrix}
\right\}
\begin{pmatrix}
h\\ \sigma
\end{pmatrix}
\end{equation}
subject to 
$$ h\in \mathbb{R}^{|E|},\quad \sigma \in\mathbb{R}^{|V|},\quad 
h^Th+\sigma^T\sigma=1,~\sum_{i\in V}\sigma_i=0.$$ 
Here, $\lambda(m,\rho)$ is the smallest eigenvalue of Hessian matrix for the objective function with tangent vectors $(h, \sigma)$. We last show that $\mathcal{K}(m,\rho)$ is a smooth, convex function in the interior of simplex set. We have
\begin{equation*}
\mathcal{K}(m,\rho)=\sum_{i+\frac{e_v}{2}\in E} \frac{2m_{i+\frac{e_v}{2}}^2}{\rho_i+\rho_{i+e_v}}.
\end{equation*}
Since $\frac{x^2}{y}$ is convex when $y>0$ and $\rho_i+\rho_{i+e_v}$ is concave on variables $\rho_i$, $\rho_{i+e_v}>0$. Then $\mathcal{K}$ is convex. From \eqref{claim3}, we have 
\begin{equation}\label{f}
\mathcal{J}(h,\sigma):=\begin{pmatrix}
h\\ \sigma
\end{pmatrix}^T
\begin{pmatrix}
\mathcal{K}_{mm} & \mathcal{K}_{m\rho}\\
\mathcal{K}_{\rho m} & \mathcal{K}_{\rho\rho}
\end{pmatrix}
\begin{pmatrix}h \\ \sigma\end{pmatrix}+\beta^{-2} \tau^2\sigma^T\mathcal{I}_{\rho\rho}\sigma \geq 0 .\end{equation}
We claim that the inequality in \eqref{f} is strict. Suppose there exists $(h^*,\sigma^*)$, such that \eqref{f} is zero, i.e.
\begin{equation*}
\mathcal{J}(h^*, \sigma^*)=0.
\end{equation*}
In this case, from \eqref{claim3}, $\sigma^*=0$ . Thus \eqref{f} forms 
\begin{equation*}
\mathcal{J}(h^*,\sigma^*)={h^*}^T\mathcal{K}_{mm} h^*=0.
\end{equation*}
Since $\mathcal{K}_{mm}=\textrm{diag}(\frac{4}{\rho_i+\rho_{i+e_v}})_{i+\frac{e_v}{2}\in E}$ is strictly positive, we have $h^*=0$, which contradicts the fact that $h^Th+\sigma^T\sigma=1$.
From the above statements, we prove that there exists a unique solution $\rho^{k+1}$

(ii) Denote $(m^*, \rho^{k+1})$ as the minimizer of variation problem \eqref{new_form3}. Then 
\begin{equation*}
F(\rho^k, 0)\leq F(\rho^{k+1},m^*).
\end{equation*}
This further implies 
\begin{equation*}
\beta^{-2} \tau^2\mathcal{I}(\rho^k)+2\tau\mathcal{E}(\rho^k)\leq \beta^{-2} \tau^2\mathcal{I}(\rho^{k+1})+2\tau \mathcal{E}(\rho^{k+1}),
\end{equation*}
which finishes the proof. 

(iii) holds since $\mathcal{I}(\rho)$ goes to infinity on the boundary
of simplex set.

(iv) is true, because the continuity equation in \eqref{new_form3} satisfies 
$$\sum_{i}(\rho^{k+1}_i-\rho^k_i)=-\sum_{i}\sum_{e_i}\frac{1}{\Delta x}(m_{i+\frac{e_i}{2}}-m_{i-\frac{e_i}{2}})=0.$$
This finishes the proof.
 \end{proof}

 \subsection{Optimization method}
To solve \eqref{new_form3}, we first rewrite our problem into a vector form. Let $u = (\vecrho, \vecm )$, then \eqref{new_form3} can be written as 
\begin{equation} \label{scheme100}
\min_{u} F(u), \quad \text{s.t.} \quad \Amat u = b, ~ \Smat u \geq 0,
\end{equation}
where $F(u)$ is defined in \eqref{F1D}, $\Amat$ is the matrix representation of the constraint \eqref{constraint1D} or \eqref{constraint2D}, and $\Smat$ is a selection matrix that only selects the $\rho$ components in $u$. Let $\indi$ be the indicator function, then \eqref{scheme100} can be further reformulated as 
\begin{equation} \label{objfun}
\min_{u} F(u) + \indi( u), \quad \quad  \indi ( u) = \left\{ \begin{array}{cc} 0 & \Amat u = b, ~ \Smat u \geq 0 \\ +\infty & \text{ otherwise} \end{array} \right. \,.
\end{equation}
Here $F(u)$ defined in either \eqref{F1D} or \eqref{F2D} is a smooth, convex function (provided $E$ is convex), and indicator function $\indi$ is also convex. Therefore we adopt the (approximate) sequential quadratic programming to solve it:
\begin{equation} \label{proximalNewton}
\begin{cases}{}
z^{(l+1)} \in \arg\min_{z} \half (z-u^{(l)})^T  \Hmat^{(l)} (z-u^{(l)}) + \nabla F(u^{(l)})^T (z-u^{(l)}) + \indi( z)\,,
\\ u^{(l+1)} =u^{(l)} + t_{l} (z^{(l+1)}-u^{(l)})\,.
\end{cases}
\end{equation}
where $\Hmat^{(l)}$ is either the Hessian $\nabla^2 F(u^{(l)})$ or an approximation of it.

\begin{tabbing}
aaaaa\= aaa \=aaa\=aaa\=aaa\=aaa=aaa\kill  
 \rule{\linewidth}{0.8pt}\\
 \textbf{Algorithm}: Sequential quadratic programming for one step regularized JKO\\
   \rule{\linewidth}{0.8pt}\\
  \1 \textbf{Input}: $\rho(t_0,x)$ $\rho(t_0,x) = 0$, $\text{Iter}_{\max}$\\
    \3 Parameter $\beta>0$, step size $\alpha_k\in(0,1)$, discretization parameters $\Delta x$, $\Delta t$ \\
  \1 \textbf{Output}: $\rho( t_k,x)$ for $1\leq k \leq n$\\
    \rule{\linewidth}{0.8pt}\\ 
1.  \1 \For $k = 2, 3,  \cdots ,n $ \textrm{\bf do } \\
2. \2  $m^{(0)} = m(t_k,x)$ \\
3. \2 $\rho^{(0)} = \left\{ \begin{array}{cc} 2 \rho(t_k,x) - \rho(t_{k-1},x) & \text{if} \quad 2 \rho(t_k,x) - \rho(t_{k-1},x) \geq1e-6 \\ \rho(t_k,x) & \text{if}  \quad 2 \rho(t_k,x) - \rho(t_{k-1},x) <1e-6  \end{array} \right.$ \\
4.  \2 $u^{(0)}  = (\rho^{(0)}, m^{(0)})$, $l=0$\\
5.  \2 \While $l \leq \text{Iter}_{\max}$, \textrm{\bf do} \\
6. \3 $z^{(l+1)} \in \arg\min_{z} \half (z-u^{(l)})^T  \Hmat^{(l)} (z-u^{(l)}) + \nabla F(u^{(l)})^T(z-u^{(l)}) + \indi( z)$ \\
7. \3 $u^{(l+1)} =u^{(l)} + t_{l} (z^{(l+1)}-u^{(l)})$  \\
8. \3 update $\Hmat^{(l+1)}$ and $\nabla F(u^{(l+1)})$ (see Remark~\ref{remark:H})\\
9. \2  \textbf{until} stopping criteria is achieved \\ 
10. \2 $\rho(t_{k+1},x) =  \rho^{(l+1)}$, \quad $ m(t_{k+1},x) = m^{(l+1)}$ \\
11.  \1 \End\\
   \rule{\linewidth}{0.8pt}
\end{tabbing}

There are several approaches to solve the subproblem in line 6. Among them, we have tried interior point method, projected preconditioned conjugate gradient method \cite{pCG01}, and first order fast iterative shrinkage thresholding algorithm (FISTA) \cite{FISTA09}. In our case where $\Hmat^{(l)}$ is sparse but ill conditioned, we found that the MATLAB built-in function `quadprog' with interior point solver performs the best.

\subsection{Convergence}
In this section, we analyze the convergence of \eqref{proximalNewton}, especially the role that $\Hmat$ plays. We have the following assumptions:
\begin{itemize}
\item[(A1)] $m\Imat \preceq \nabla^2F \preceq M \Imat$, \quad $M \geq m >0$; 
\item[(A2)] the subproblem in \eqref{proximalNewton} is solved exactly. 
\end{itemize}
First we note that $\ulp$ can be rewritten as
\begin{align}
\ulp &= \prox_{t_l \indi}^{\Hl} (\Ul  - t \Hl^{-1} \nabla F(\Ul) )  \nonumber
\\ & =  \arg\min_u \frac{1}{2t_l} \|  u - \Ul + t_l \Hl^{-1} \nabla F(\Ul)  \|_\Hl  + \indi (u)\,,
\label{ulp}
\end{align}
where $\|u\|_\Hl = u^T \Hl u$. Further, let $u^*$ be the unique minimizer to \eqref{objfun}, then $u^*$ solves
\begin{equation}
u^* = \prox_{t \indi}^{\Hmat} (u^*  - t \Hmat^{-1} \nabla F(u^*) ) \,,
\label{ustar}
\end{equation}
where $t>0$ and $\Hmat$ is any positive definite matrix. Our first result is, when $\Hl$ is only an approximation of $\nabla^2 F(\Ul)$, we get first order convergence with convergence rate depends on the condition number $\Hl^{-1} \nabla^2 F(\Ul)$. More specifically, we have
\begin{theorem}\label{quasiN}
Consider uniform time step $t$. Let $G_l = \int_0^1 \nabla^2 F(u^* + s(u_l - u^*)) \rd s$, then 
\begin{equation} \label{linearConv}
\norm{\ulp - u^*}_{\Hl} \leq \left| \frac{1-\kappa}{1+\kappa}\right| \norm{\Ul - u^*}_{\Hl}\,,
\end{equation}
where $\kappa$ is the condition number of $\Hl^{-1} G_l$.
\end{theorem}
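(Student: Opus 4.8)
The plan is to recognize the update \eqref{ulp} as one step of a metric proximal-gradient fixed-point iteration and to exploit the nonexpansiveness of the weighted proximal map. First I would observe that, since $\indi$ is the indicator of the feasible set $\{u : \Amat u = b,\ \Smat u \ge 0\}$, the map $\prox_{t_l \indi}^{\Hl}$ is exactly the $\Hl$-orthogonal projection $\proj^{\Hl}$ onto that convex set, and projections onto convex sets are nonexpansive in their defining metric. Using \eqref{ulp} for $\ulp$ and \eqref{ustar} for $u^*$ with the \emph{same} matrix $\Hmat = \Hl$ and the same step $t$ (legitimate because \eqref{ustar} holds for any positive definite matrix), both points become images of the same projection. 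Assumption (A2) guarantees the subproblem is solved exactly, so these identities hold with equality.

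Then I would apply nonexpansiveness to obtain
\begin{equation*}
\norm{\ulp - u^*}_{\Hl} \le \norm{(\Ul - u^*) - t\,\Hl^{-1}\bigl(\nabla F(\Ul) - \nabla F(u^*)\bigr)}_{\Hl}.
\end{equation*}
The fundamental theorem of calculus gives $\nabla F(\Ul) - \nabla F(u^*) = G_l (\Ul - u^*)$ with $G_l$ as defined, so the right-hand side equals $\norm{(\Imat - t\,\Hl^{-1} G_l)(\Ul - u^*)}_{\Hl}$, and it remains to bound the $\Hl$-operator norm of $\Imat - t\,\Hl^{-1} G_l$. Writing the symmetric square root $\Hl = \Hl^{1/2}\Hl^{1/2}$ so that $\norm{x}_{\Hl} = \normtwo{\Hl^{1/2} x}$, the change of variables $y = \Hl^{1/2} x$ turns this operator norm into the spectral norm $\normtwo{\Imat - t\, \Hl^{-1/2} G_l \Hl^{-1/2}}$. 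The symmetric matrix $\Hl^{-1/2} G_l \Hl^{-1/2}$ is similar to $\Hl^{-1} G_l$ and, by (A1), positive definite, so its eigenvalues $\mu_i$ are exactly the positive eigenvalues of $\Hl^{-1} G_l$, with a finite condition number.

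Finally, the bound reduces to the scalar min--max quantity $\max_i \lvert 1 - t\mu_i\rvert$. Choosing the step $t = 2/(\mu_{\min} + \mu_{\max})$ balances the two extreme eigenvalues and yields $\max_i \lvert 1 - t\mu_i\rvert = (\mu_{\max} - \mu_{\min})/(\mu_{\max} + \mu_{\min}) = \lvert (1-\kappa)/(1+\kappa)\rvert$, where $\kappa = \mu_{\max}/\mu_{\min}$ is the condition number of $\Hl^{-1} G_l$; this is precisely \eqref{linearConv}. I expect the main obstacle to be the nonexpansiveness step: one must verify carefully that the $\Hl$-weighted prox of the indicator is a genuine metric projection and hence $1$-Lipschitz in $\norm{\cdot}_{\Hl}$ rather than merely in $\normtwo{\cdot}$, and one should flag that attaining the exact factor $\lvert(1-\kappa)/(1+\kappa)\rvert$ requires the optimal step $t = 2/(\mu_{\min}+\mu_{\max})$, which in principle depends on $l$ through $G_l$ and $\Hl$.
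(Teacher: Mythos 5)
Your proposal is correct and takes essentially the same approach as the paper: nonexpansiveness of the $\Hl$-weighted proximal map (the paper's Lemma~\ref{lemma0}, which you rightly specialize to a metric projection since $\indi$ is an indicator), the identity $\nabla F(\Ul)-\nabla F(u^*)=G_l(\Ul-u^*)$ from the fundamental theorem of calculus, the operator-norm bound via the $\Hl^{1/2}$ change of variables reducing to the symmetric matrix $\Imat - t\,\Hl^{-1/2}G_l\Hl^{-1/2}$, and the optimal step $t = 2/(\lambda_1+\lambda_N)$. Your observation that the sharp factor requires an $l$-dependent step choice is a fair caveat that the paper leaves implicit in its ``uniform time step'' phrasing.
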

To prove the above theorem, we need the following lemma on the contraction of the proximal operator. 
\begin{lemma} \label{lemma0}
If $u = \prox_\indi^\Hmat (x)$, $v = \prox_\indi^\Hmat (y)$, where $\indi$ is a convex function, and $\Hmat$ is a positive definite matrix, then we have $(u-v)^T \Hmat (x-y) \geq \norm{u-v}_\Hmat^2$. Consequently, $\| u - v\|_\Hmat \leq \| x- y\|_\Hmat$.
\end{lemma}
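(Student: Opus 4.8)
The plan is to characterize each proximal point by its first-order optimality condition and then invoke monotonicity of the subdifferential of $\indi$. Recall that in the $\Hmat$-metric the proximal map is
\[
\prox_\indi^\Hmat(x) = \arg\min_{w} \Big\{ \indi(w) + \tfrac{1}{2}(w-x)^T \Hmat (w-x) \Big\},
\]
and since $\Hmat$ is positive definite this objective is strictly convex, so $u$ and $v$ are the unique minimizers and are well defined even when $\indi$ is the indicator of a convex set as in \eqref{objfun}.

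First I would record the optimality conditions. By Fermat's rule applied to the (possibly extended-real-valued) convex objective, $u = \prox_\indi^\Hmat(x)$ satisfies $0 \in \partial \indi(u) + \Hmat(u-x)$, that is $\Hmat(x-u) \in \partial\indi(u)$; likewise $\Hmat(y-v) \in \partial\indi(v)$.

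Next I would use that the subdifferential of a convex function is a monotone operator: for any $p \in \partial\indi(u)$ and $q \in \partial\indi(v)$ one has $(p-q)^T(u-v) \ge 0$. Substituting $p = \Hmat(x-u)$, $q = \Hmat(y-v)$ and using the symmetry of $\Hmat$ gives
\[
(u-v)^T\Hmat(x-y) - (u-v)^T\Hmat(u-v) \ge 0,
\]
which is precisely the asserted firm non-expansiveness $(u-v)^T\Hmat(x-y) \ge \norm{u-v}_\Hmat^2$.

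Finally, the stated consequence follows from Cauchy--Schwarz in the inner product $\langle a,b\rangle_\Hmat := a^T\Hmat b$ (an inner product because $\Hmat$ is positive definite): chaining $\norm{u-v}_\Hmat^2 \le (u-v)^T\Hmat(x-y) \le \norm{u-v}_\Hmat \, \norm{x-y}_\Hmat$ and dividing by $\norm{u-v}_\Hmat$ (the case $u=v$ being trivial) yields $\norm{u-v}_\Hmat \le \norm{x-y}_\Hmat$. The only point requiring care is that $\indi$ is nonsmooth and extended-real-valued, so I must work with subgradients and Fermat's rule rather than gradients; both are standard for closed proper convex functions, and the core estimate is then a one-line monotonicity computation, so I anticipate no genuine obstacle.
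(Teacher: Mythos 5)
Your proof is correct, and it is exactly the "standard" argument that the paper invokes without writing out (the paper omits the proof of this lemma entirely): Fermat's rule giving $\Hmat(x-u)\in\partial\indi(u)$ and $\Hmat(y-v)\in\partial\indi(v)$, monotonicity of the subdifferential to obtain $(u-v)^T\Hmat(x-y)\geq \|u-v\|_\Hmat^2$, and Cauchy--Schwarz in the $\Hmat$-inner product (with the trivial case $u=v$ noted) for the nonexpansiveness. No gaps; your handling of the extended-real-valued, nonsmooth $\indi$ via subgradients is precisely what is needed for the indicator function used in \eqref{objfun}.
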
 
The proof of this lemma is standard, so we omit the details and directly jump to the proof of Theorem \ref{quasiN}. 
\begin{proof}[Proof of Theorem \ref{quasiN}]
By virtue of \eqref{ulp}, and \eqref{ustar} with $\Hmat = \Hl$, we have 
\begin{align}
\normHl{\ulp - u^*} & = \normHl{\prox_{t \indi}^{\Hl} (\Ul  - t \Hl^{-1} \nabla F(\Ul) )  - \prox_{t \indi}^{\Hmat} (u^*  - t \Hmat^{-1} \nabla F(u^*) ) } \nonumber
\\ & \leq \normHl{ \Ul - u^* -  t \Hl^{-1} \nabla F(\Ul) + t \Hl^{-1} \nabla F(u^*)}  \nonumber
\\ &= \normHl{\Ul - u^* - t \Hl^{-1} G_l (\Ul - u^*)}  \nonumber
\\ & \leq \normHl{\Imat - t\Hl^{-1} G_l } \normHl {\Ul - u^*}\,. \label{16}
\end{align}
where the first inequality uses Lemma~\ref{lemma0}.
Since both $\Hl$ and $G_l$ are positive definite from assumption (A1), we denote $\lambda_1 \geq \lambda_2 \geq \cdots \geq \lambda_N>0 $ as the eigenvalues of $\Hl^{-1} G_l $, then
\begin{equation} \label{624}
\normHl{\Imat - t\Hl^{-1} G_l } \leq \max \{ \normHl{(1-t\lambda_1) \Imat} , ~\normHl{(1-t\lambda_N) \Imat} \}\,.
\end{equation}
Here we have used the fact that for two symmetric positive semi-definite matrix $\Amat$ and $\Bmat$, if $A \preceq B$, then $\normHl{\Amat} \leq \normHl{\Bmat}$. Indeed, since $\normHl{\Amat} = \sup_x {x^T \Amat^T \Hl \Amat x}/{x^T \Hl  x}$, let $y = \Hl^{\half} x$, we have $\normHl{\Amat} = \sup_y {y^T \Hl^{-\half} \Amat^T \Hl^\half \Hl^\half \Amat \Hl^{-\half} y}/{ y^T y}$, therefore, $\normHl{\Amat} = \|\Hl^{-\half} \Amat \Hl^\half \|_2 = \|\Amat\|_2$.

Choose $t = \frac{2}{\lambda_1 + \lambda_N}$ in \eqref{624} so that it minimize its RHS, and plug it into \eqref{16} to get the final result. 
\end{proof}

\begin{remark}[Comparison with proximal gradient]
Consider the proximal gradient method for solving \eqref{objfun}
\begin{equation} \label{proxgrad}
\ulp = \prox_{t\indi}(\Ul - t \nabla F(\Ul))\,.
\end{equation}
Comparing it to \eqref{ulp}, we see that \eqref{ulp} is a preconditioned version of \eqref{proxgrad}. Indeed, substituting  
\eqref{ustar} with $\Hmat = \Imat$ from \eqref{proxgrad}, we get
\begin{align} \label{conv22}
\normtwo{\ulp - u^*} \leq \normtwo{\Imat - t G_l} \normtwo{\Ul - u^*}  \leq \left|\frac{1-\kappa_G}{1+\kappa_G}\right| \normtwo{\Ul - u^*}\,,
\end{align}
where $\kappa_G$ is the condition number of $G_l$. Therefore when $G_l$ is ill-conditioned, which is the case in the presence of vacuum due to the nonlinear diffusion, the convergence rate in \eqref{conv22} is much slower than that in \eqref{linearConv}. 
\end{remark}

\begin{remark}[Choice of $\Hl$] \label{remark:H} In our problems when
  the energy term $\energy$ only contains internal and potential
  energies, both of which are local in $\rho$, we directly compute the
  Hessian of $F$ as $\Hmat$, since in this case the Hessian is sparse
  and very cheap to compute. When $\energy$ also contains interaction
  energy, the Hessian of $F$ is dense, and we instead approximate the
  Hessian of $F$ by replacing the interaction energy with entropy
  $ \int \rho \log \rho \rd x$, and adjusting the parameter in the
  Fisher information term to approximate the original Hessian. More
  specifically, for the general case where $F(u)$ is
\[
F(u) = \int_0^1  \!\! \int_\Omega \frac{m^2}{\rho}   + \beta^{-2} \tau^2\rho \left( \nabla \log \rho \right)^2\rd x \rd t  + 2\tau  \int V(x) \rho + U(\rho) + \half ( W\ast \rho) \rho \rd x.
\]
We compute the Hessian $\Hmat$ of 
\begin{equation}
\tilde{F}(u) = \int_0^1  \!\! \int_\Omega \frac{m^2}{\rho}   + \tilde{ \beta}^{-2} \tau^2 \rho \left( \nabla \log \rho \right)^2\rd x \rd t  + 2\tau  \int V(x) \rho + U(\rho) +  \rho \log \rho ~ \rd x 
\end{equation}
as an approximation of $\nabla^2 F$. Here $\tilde{\beta}^{-2}$ is an integer multiple of $\beta^{-2}$. 
\end{remark}

We close this subsection by stating following result that when $\Hl$
is exact Hessian of $F$, we obtain local quadratic convergence. We
omit the proof, which is standard.
\begin{theorem}
Assume further that $\normtwo{\nabla^2 F(x) - \nabla^2 F(y)} \leq L \normtwo{x-y}$. If $\Hl = \nabla^2 F(\Ul)$ in \eqref{proximalNewton}, then for sufficiently large $l$, $t_l \rightarrow 1$, and $\Ul$ satisfies
\[
\normtwo{\ulp - u^*} \leq \frac{L}{2m} \normtwo{ \Ul - u^*}^2\,.
\] 
\end{theorem}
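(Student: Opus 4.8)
The plan is to carry out the standard local analysis of the proximal Newton iteration \eqref{proximalNewton} in two stages: first reduce to the unit-step regime, and then establish a quadratic contraction in the $\Hl$-norm and transfer it to the Euclidean norm. For the first stage I would invoke the global convergence of the damped scheme (which follows from strong convexity via (A1) together with the descent property of the backtracking line search) to place $\Ul$ in an arbitrarily small neighborhood of $u^*$; inside such a neighborhood one shows that the full proximal-Newton step satisfies the sufficient-decrease acceptance test, so that $t_l\to 1$. From this point on I would set $t_l=1$ and work with the full step.

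With unit step, the fixed-point identity \eqref{ulp} with $\Hmat=\Hl$ and the optimality relation \eqref{ustar} read $\ulp=\prox_{\indi}^{\Hl}(\Ul-\Hl^{-1}\nabla F(\Ul))$ and $u^*=\prox_{\indi}^{\Hl}(u^*-\Hl^{-1}\nabla F(u^*))$. Applying the nonexpansiveness of the scaled proximal map (Lemma~\ref{lemma0}) in the $\Hl$-norm gives
\[
\normHl{\ulp-u^*}\le \normHl{(\Ul-u^*)-\Hl^{-1}\bigl(\nabla F(\Ul)-\nabla F(u^*)\bigr)}.
\]
Writing $\nabla F(\Ul)-\nabla F(u^*)=G_l(\Ul-u^*)$ with $G_l$ as defined in Theorem~\ref{quasiN} (the fundamental theorem of calculus for the gradient), and using $\Hl=\nabla^2 F(\Ul)$, the right-hand side collapses to $\normHl{\Hl^{-1}(\Hl-G_l)(\Ul-u^*)}$.

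I would then quantify the residual $\Hl-G_l$ using the Lipschitz hypothesis on the Hessian:
\[
\normtwo{\Hl-G_l}\le \int_0^1\normtwo{\nabla^2 F(\Ul)-\nabla^2 F(u^*+s(\Ul-u^*))}\,\rd s\le \int_0^1 L(1-s)\,\rd s\,\normtwo{\Ul-u^*}=\tfrac{L}{2}\normtwo{\Ul-u^*}.
\]
Recalling $\normHl{v}^2=v^T\Hl v$, I expand $\normHl{\Hl^{-1}(\Hl-G_l)(\Ul-u^*)}^2=(\Ul-u^*)^T(\Hl-G_l)\Hl^{-1}(\Hl-G_l)(\Ul-u^*)$ and bound $\Hl^{-1}\preceq m^{-1}\Imat$ from (A1), which yields $\normHl{\ulp-u^*}\le m^{-1/2}\normtwo{\Hl-G_l}\,\normtwo{\Ul-u^*}$. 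Finally, the lower bound $\normHl{\ulp-u^*}\ge\sqrt{m}\,\normtwo{\ulp-u^*}$, again from (A1), combines with the two preceding displays to give exactly
\[
\normtwo{\ulp-u^*}\le \frac{L}{2m}\normtwo{\Ul-u^*}^2.
\]

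The algebra in the second stage is routine: it is a composition of the prox contraction, a first-order Taylor expansion of $\nabla F$, the Hessian Lipschitz estimate, and the norm equivalence $\sqrt{m}\,\normtwo{\cdot}\le\normHl{\cdot}\le\sqrt{M}\,\normtwo{\cdot}$. The genuinely delicate point is the first stage, namely justifying that the step size saturates at $t_l=1$; this requires showing that once $\Ul$ is sufficiently close to $u^*$ the full step passes the line-search acceptance criterion, which is the most technical (though standard) ingredient and the one I would expect to require the most care. A minor bookkeeping issue is self-consistency: one must verify that the neighborhood on which $t_l=1$ is accepted is forward-invariant under the quadratic estimate, so that the contraction persists for all large $l$.
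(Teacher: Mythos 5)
Your proof is correct: the paper itself omits the argument (``We omit the proof, which is standard''), and what you give is exactly that standard argument --- reduction to the unit step, the scaled-prox nonexpansiveness of Lemma~\ref{lemma0} applied to \eqref{ulp} and \eqref{ustar} with $\Hmat=\Hl=\nabla^2 F(\Ul)$, the Lipschitz--Taylor estimate $\normtwo{\Hl-G_l}\le \tfrac{L}{2}\normtwo{\Ul-u^*}$, and the norm equivalences $\sqrt{m}\,\normtwo{\cdot}\le\normHl{\cdot}$, $\normHl{\Hl^{-1}w}\le m^{-1/2}\normtwo{w}$ from (A1), which combine to yield precisely the constant $\tfrac{L}{2m}$. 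The only piece you leave at sketch level is the line-search saturation $t_l\to 1$, which is also the piece the paper leaves entirely implicit (it never even specifies the step-size rule), so your write-up is at least as complete as the paper's own treatment.
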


\section{Numerical examples}\label{section4}
In this section, we demonstrate several numerical examples to show the accuracy and efficiency of the proposed scheme \eqref{new_form3}. The stopping criteria in the sub optimization problem (see line 9 in the Algorithm) is chosen as 
\[
|F(u^{(l+1)}) - F(u^{(l)})|/|F(u^{(l)})|< \text{TOL}\,,
\]
where $\text{TOL}$ is set to be $10^{-6}$ unless otherwise specified. 
\subsection{1D problem}
\subsubsection{Heat equation}
For heat equation, we directly choose $\beta = 1$, and let the initial condition be
\[
\rho(x,0) = e^{-100(x-1)^2}  + 10^{-5}\,, \qquad x \in [0,2]\,.
\]
In Fig.~\ref{fig1} on the left, we apply \eqref{new_form3} on a coarse mesh and compare the solution with the reference solution obtained by implicit diffusion solver on a fine mesh, and observe good agreements. 
When $\Delta x$ is sufficiently small, we check the first order accuracy of our scheme by computing the following relative error 
\begin{equation}\label{err1}
e_\tau = \| \rho_{\tau}(\cdot,\tmax) - \rho_{\tau/2}(\cdot,\tmax) \|_{l^1} = \sum_{j=1}^{N_x} |(\rho_\tau)_j(\tmax) - (\rho_{\tau/2})_j(\tmax)| \Delta x
\end{equation}
and error with respect to the reference solution 
\begin{equation} \label{err2}
e_\tau = \| \rho_\tau (\cdot,\tmax ) - \rho_{\text{ref}}(,\tmax)\|_{l^1}
\end{equation}
with decreasing $\tau$. 

\begin{figure}[h!]
\includegraphics[width=0.48\textwidth]{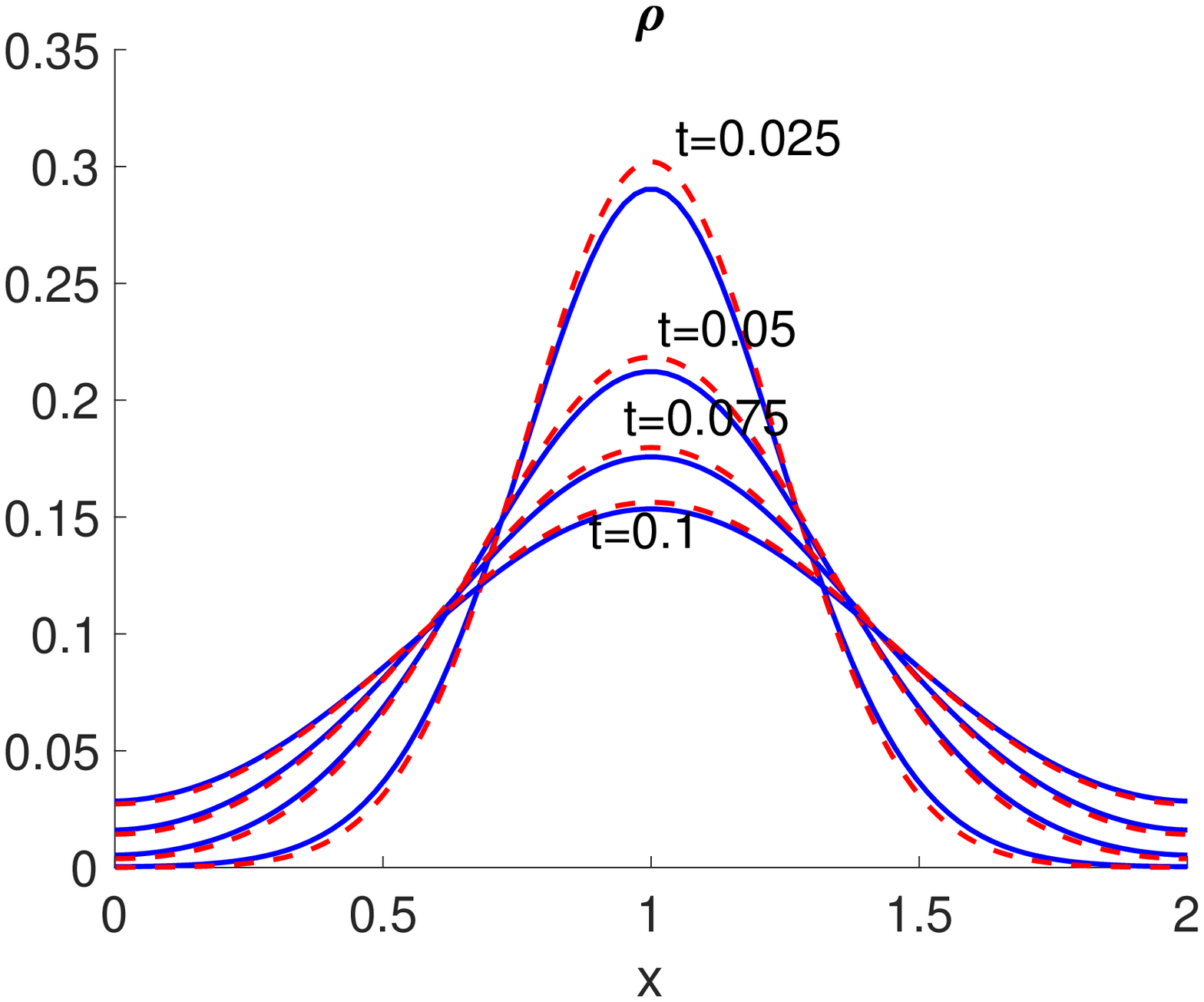}
\includegraphics[width=0.48\textwidth]{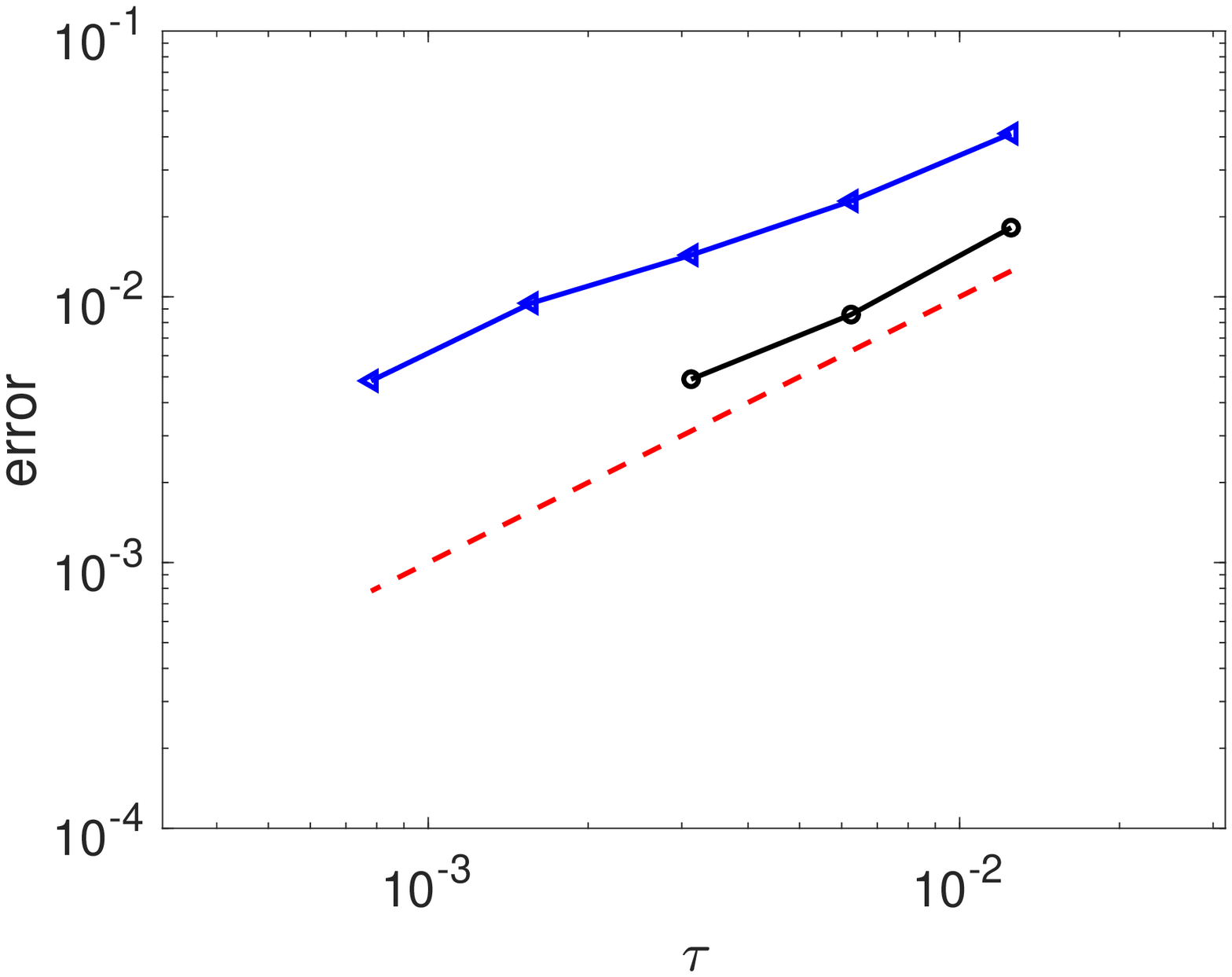}
\caption{Left: comparison of our scheme with the reference solution obtained by fully implicit diffusion solver with a refined grid (i.e., $\Delta x =0.0025 $, $\tau = 1.25\times 10^{-4}$). In our scheme, we used $\Delta x = 0.0202$, $\tau = 0.0025$, and the stoping criteria in the quadratic programming is $1.25\times 10^{-7}$. Right: check the order of accuracy with sufficiently small $\Delta x = 0.005$. The blue triangle is computed with \eqref{err2} and black circle with \eqref{err1}. The red dashed curve indicates the first order accuracy. Here $\tmax = 0.1$.}
\label{fig1}
\end{figure}

\subsubsection{Porous medium equation}
The porous medium equation
\begin{equation} \label{PMEeqn}
\partial_t \rho = \Delta \rho^{m}\,, \quad m>1, 
\end{equation}
can be considered as the Wasserstein gradient flow of the energy \eqref{eqn:energy}, with  ${U}(\rho) = \frac{1}{m-1} \rho^m$ and $V = W = 0\,$. A well-known family of exact solutions is given by Barenblatt profiles (c.f. \cite{VazquezPME}), which are densities of the form
\begin{equation} \label{eqn:Barenblatt}
\rho(x,t) = (t+t_0)^{-\frac{1}{m+1}} \left( C - \alpha \frac{m-1}{2m(m+1)}  x^2 (t+t_0)^{-\frac{2}{m+1}} \right)_{+}^{\frac{1}{m-1}} , \qquad \text{ for } C, t_0 > 0 .
\end{equation}
In our tests, we choose $m=2$, $t_0 = 10^{-3}$ and $C = 0.8$. We plot the evolution of the numerical solution over time in Fig.~\ref{fig:porous1}, and we observe good agreement with the exact solution of the form \eqref{eqn:Barenblatt}, which is shown in dashed curve. 
\begin{figure}[h!]
\centering
\includegraphics[width=0.48\textwidth]{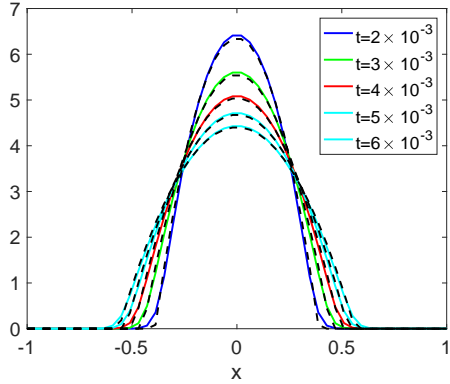}
\caption{Evolution of the solution $\rho(x,t)$ to the one dimensional porous medium equation \eqref{PMEeqn}, with $m=2$, on the domain $\Omega = [-1,1]$. Here the numerical parameters are $\Delta x = 0.0408$, $\tau = 0.5\times 10^{-3}$, $\beta = 1$, TOL = $10^{-8}$.}
\label{fig:porous1}
\end{figure}
Next, we examine how the entropic regularization affects the solution. In the left plot of Fig.~\ref{fig:porous2}, we compare solutions obtained by our scheme with various $\beta^{-1}$ and we observe that near the boundary of the solutions' support where a non-smooth transition is expected (see the black dashed curve for the exact solution), our solution with regularization inevitably smooth out the solution. As $\beta^{-1}$ decreases, the solution improves moderately. On the right, we compare the error between our solution with the exact formula \eqref{eqn:Barenblatt}:
\begin{equation}\label{eqn:err3}
e(t) = \| \rho(\cdot, t) - \rho_{\text{exact}}(\cdot, t)\|_{l^1}.
\end{equation}
As expected, smaller $\beta^{-1}$ leads to better accuracy. However, as the regularization parameter is closely related to the convexity of the problem and thus affects the convergence of the method, one has to strike a balance between the accuracy and efficiency by choosing $\beta^{-1}$ neither too big nor too small.

\begin{figure}[h!]
\centering
\includegraphics[width=0.48\textwidth]{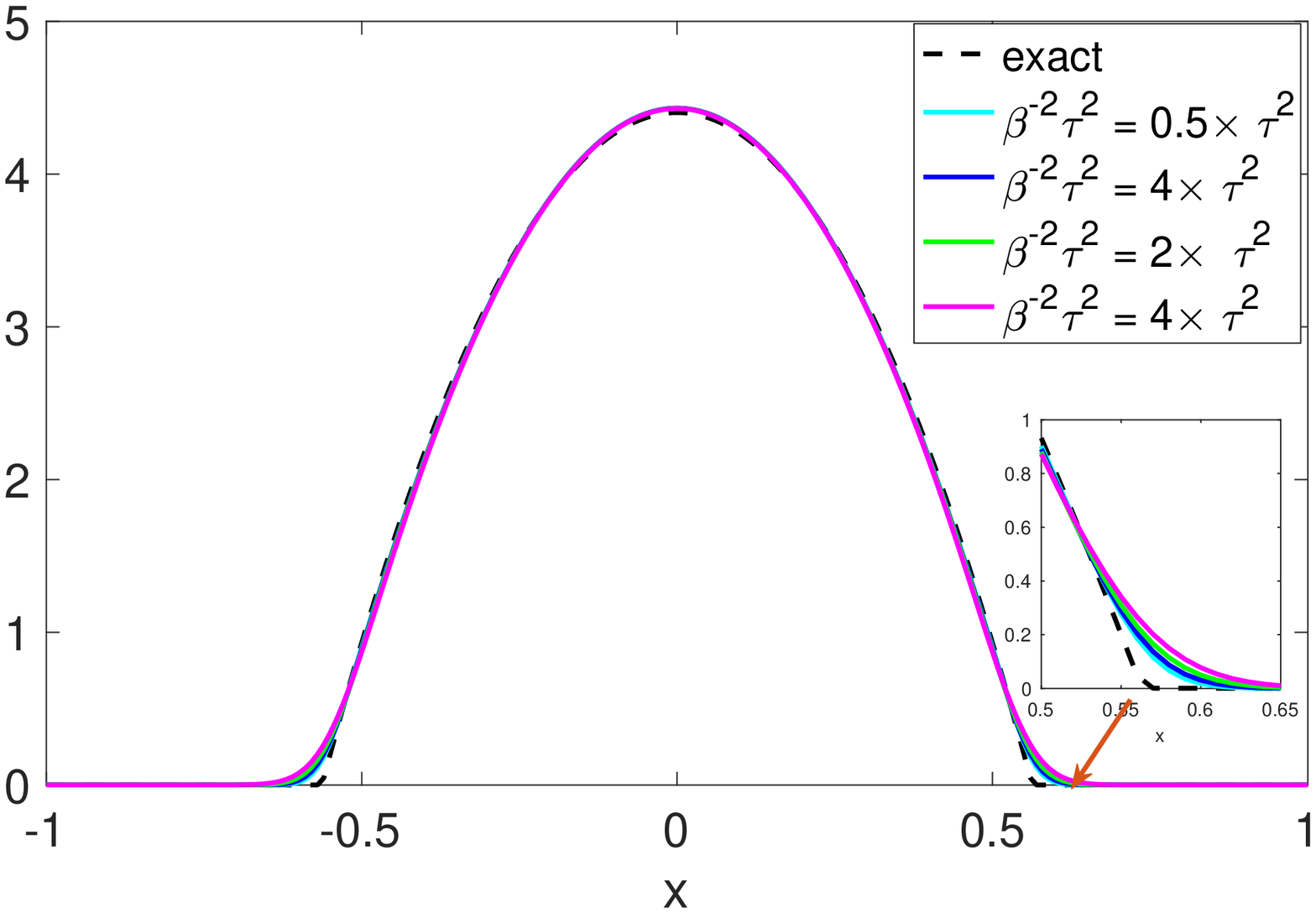}
\includegraphics[width=0.48\textwidth]{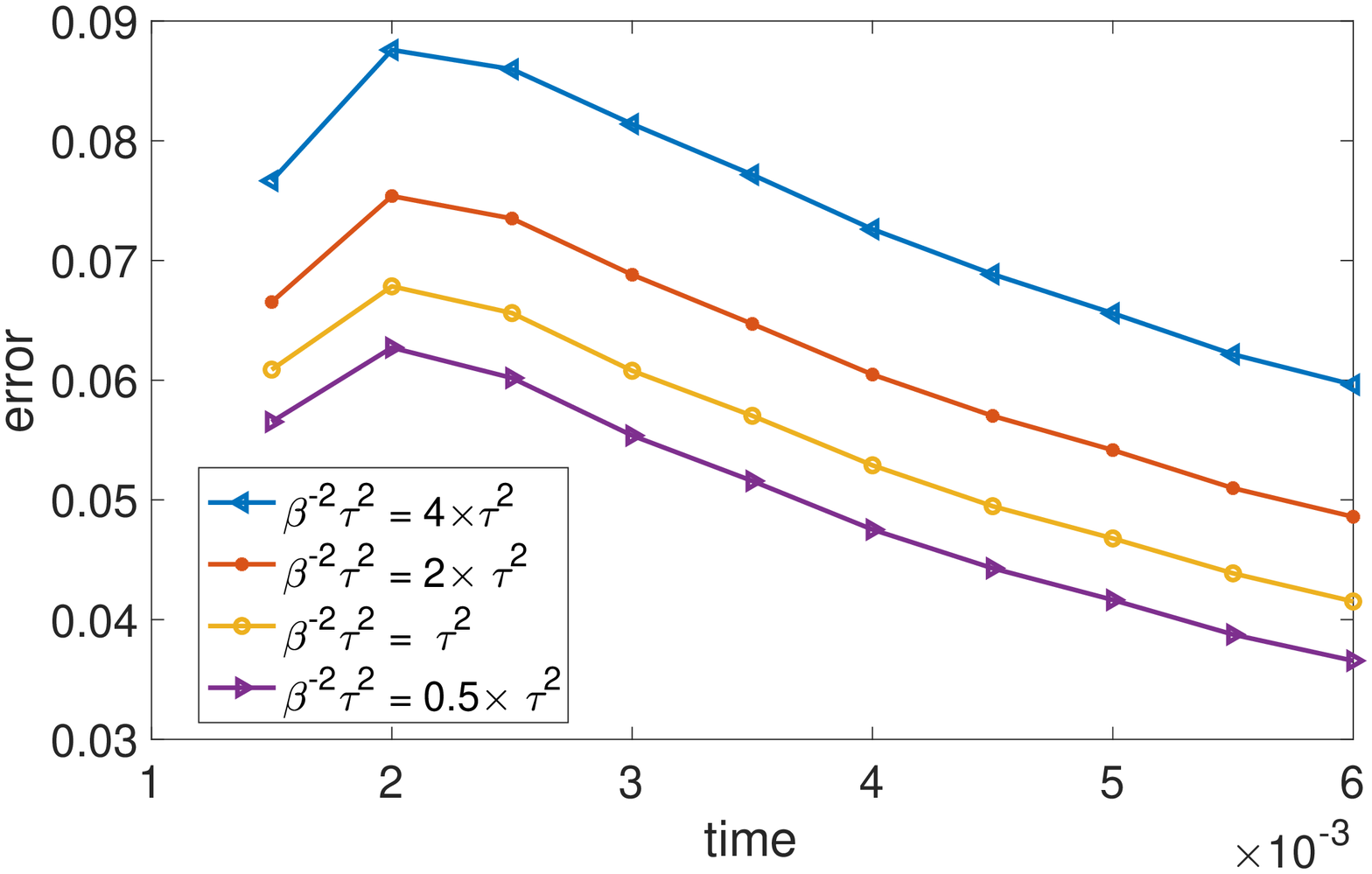}
\caption{Examine the effect of $\beta$. Left: comparison of solutions at $t=6\times 10^{-3}$ with various choices of $\beta$. A zoom in figure near the boundary of the solution's support is also provided. Right: plot the error \eqref{eqn:err3} with time for different $\beta$. Here $\Delta x = 0.01$, $\tau = 5\times 10^{-4}$, TOL = $10^{-8}$. }
\label{fig:porous2}
\end{figure}

\subsubsection{Nonlinear Fokker-Planck equation}
Next, we consider a nonlinear variant of the Fokker-Planck equation, by replacing the linear diffusion with the porous medium type nonlinear diffusion (\ref{PMEeqn}):
\begin{equation} \label{nonFPeqn}
\partial_t \rho = \nabla\cdot (\rho \nabla V) + \Delta \rho^{m}\,, \quad V: \Rd \to \R, \quad m>1, 
\end{equation}
When $V$ is a confining drift potential, all solutions approach the unique steady state
\[
\rho_\infty(x) =  \left( C -  \frac{m-1}{m}{V(x)} \right)_{+}^{\frac{1}{m-1}} \,,
\]
where $C>0$ depends on the mass of the initial data, i.e., denote $M = \int \rho_0 \rd x$, then $C = \left(\frac{3M}{8} \right)^{2/3}$, see \cite{CaTo00,CJMTU} for a derivation.

In Figure \ref{fig:nFP1}, we compute the solutions to the nonlinear Fokker-Planck equation with $V(x) = \frac{x^2}{2}$, $m=2$, and initial data given by $\rho(x,0) = \frac{1}{8} \left( \frac{1}{\sqrt{2\pi}\sigma} e^{-x^2/2\sigma^2} + 10^{-8}  \right) $. On the left, we plot the evolution of the density $\rho(x,t)$ towards the steady state $\rho_\infty(x)$. On the right, we compute the rate of decay of the corresponding energy \eqref{eqn:energy} as a function of time, observing exponential decay as the solution approaches equilibrium, which is consistent with the analytic results on convergence to equilibrium \cite{CaTo00, CDT07}.

\begin{figure}[h!]
\centering
\includegraphics[width=0.48\textwidth]{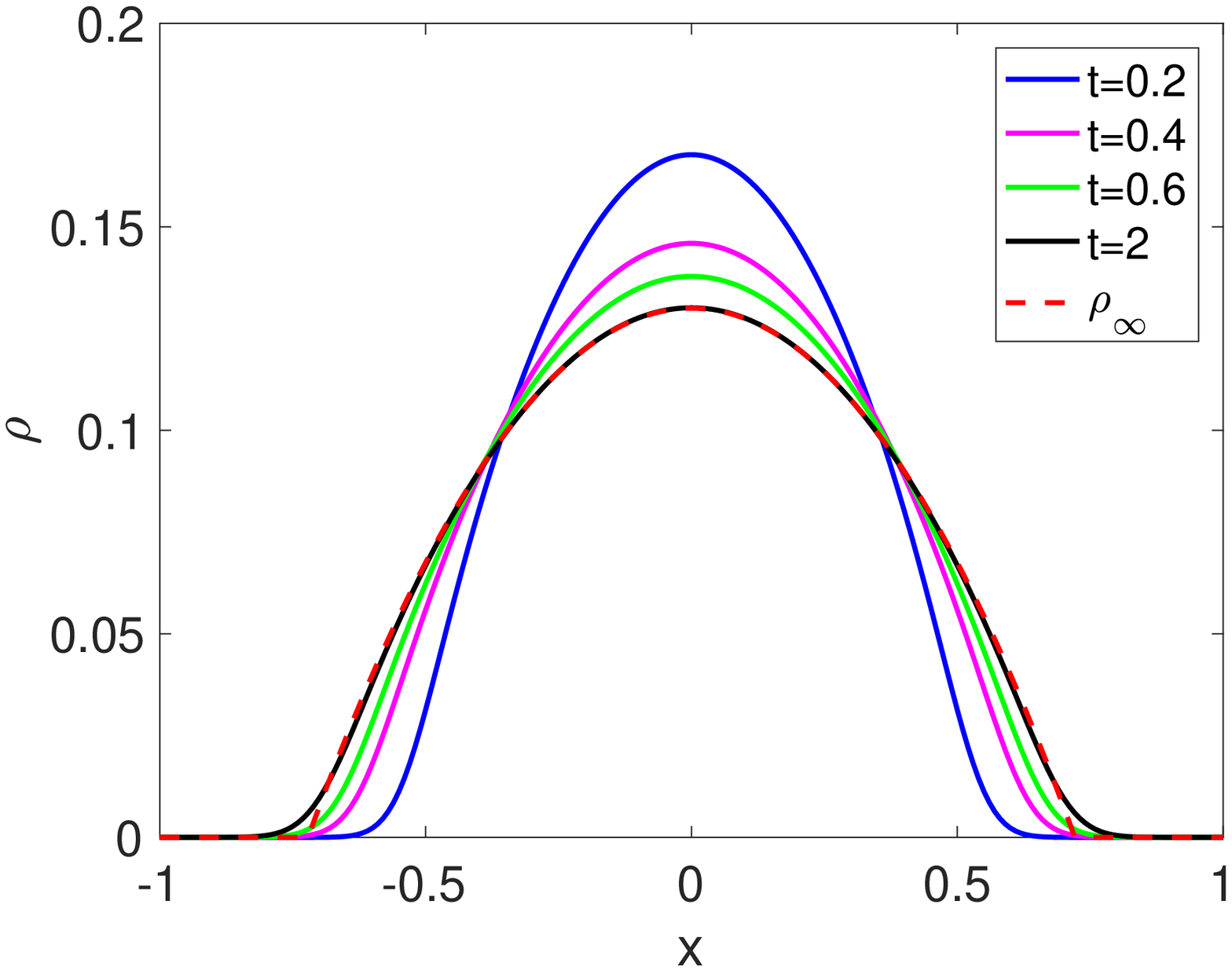}
\includegraphics[width=0.48\textwidth]{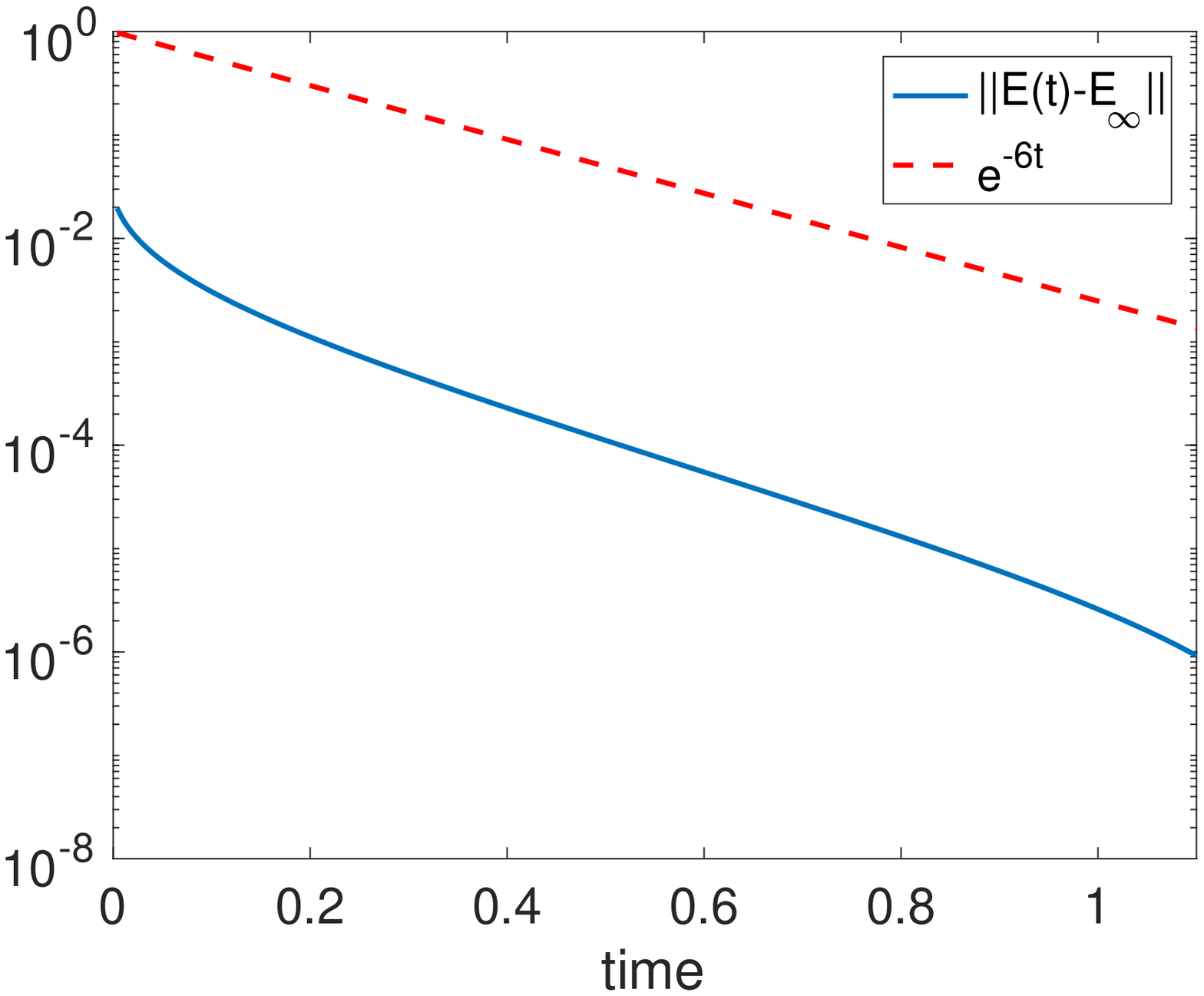}
\caption{Evolution of $\rho(x,t)$ to the one dimensional nonlinear Fokker Planck equation \eqref{nonFPeqn} with $V(x) = \frac{x^2}{2}$, $m=2$, and $x\in [-1,1]$. Numerical parameters are $\tau = 0.004$, $\Delta x = 0.01$, $\beta^{-2}\tau^2 = \tau^2/40 = 4 \times 10^{-7}$.  }
\label{fig:nFP1}
\end{figure}

\subsubsection{Aggregation equation}
In this subsection, we consider a nonlocal aggregation equation of the form
\begin{align} \label{aggeqn1}
\partial_t \rho = \nabla\cdot (\rho \nabla W*\rho)  \,, \quad W: \Rd \to \RR\,,
\end{align}
where the interaction kernel $W$ is repulsive at short length scales and attractive at longer distances. In particular, we choose the following kernel with logarithmic repulsion and quadratic attraction
\begin{equation} \label{Weqn1}
W(x) = \frac{|x|^2}{2} - \text{ln}(|x|)\,,
\end{equation}
then it is proved that there exists a unique equilibrium profile \cite{CFP12}, given by
\[\rho_\infty (x) =   \frac{1}{\pi} \sqrt{(2-x^2)_+}  .\]
In practice, to avoid evaluation of $W(x)$ at $x=0$, we set $W(0)$ to equal the average value of $W$ on the cell of width $2h$ centered at 0, i.e., $W(0) = \frac{1}{2h} \int_{-h}^{h} W(x) \rd x$, where we compute this value analytically. (See also \cite{CCH15, CCWW18} for a similar treatment.)

The numerical results are gathered in Fig.~\ref{fig:agg1}. On the left, we simulate the solution to the aggregation equation with Gaussian initial data $\rho(x,0) = \frac{1}{\sqrt{2\pi}\sigma} e^{-\frac{x^2}{2\sigma^2}} + 10^{-8}$ at varying times, observing convergence to the equilibrium profile $\rho_\infty(x)$. On the right, we compute the rate of the decay of the energy as a function of time, observing exponential decay with the theoretical rate as obtained by Carrillo et. al. \cite{CFP12}.

\begin{figure}[h!]
\centering
\includegraphics[width=0.48\textwidth]{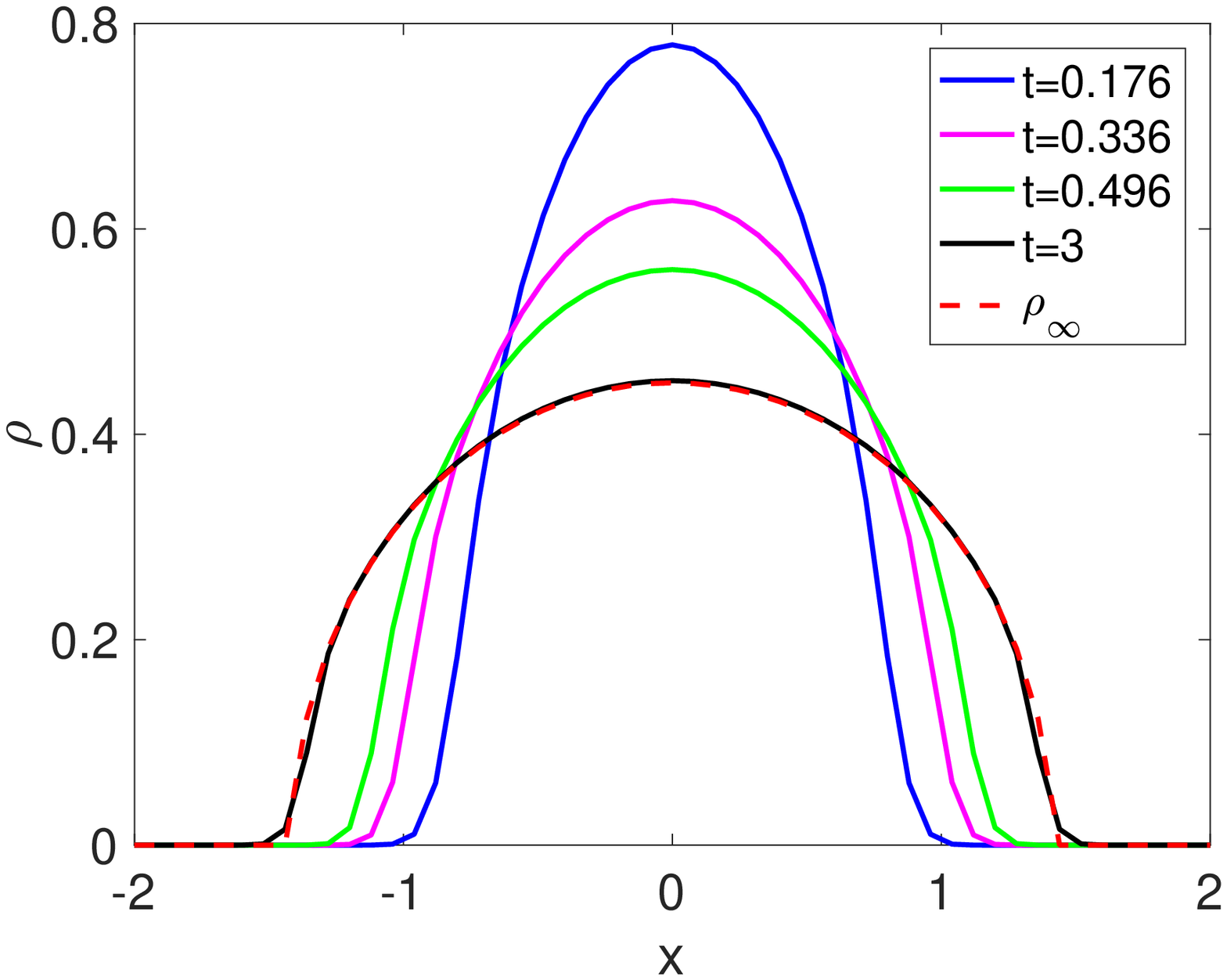}
\includegraphics[width=0.48\textwidth]{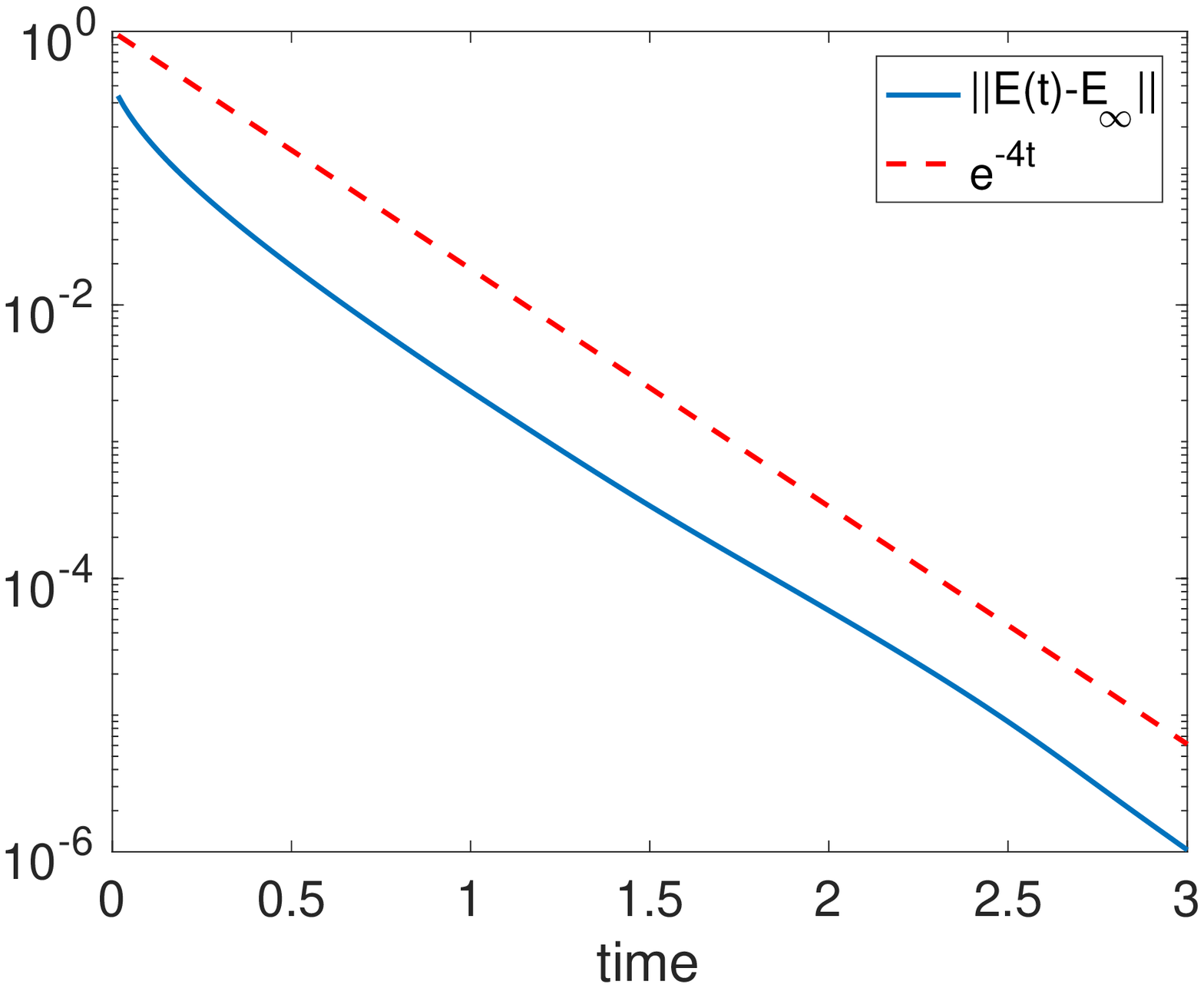}
\caption{Evolution of $\rho(x,t)$ to the one dimensional nonlocal aggregation equation \eqref{aggeqn1} with \eqref{Weqn1}, and $x\in [-2,2]$. Numerical parameters are $\tau = 0.016$, $\Delta x = 0.08$, $\beta^{-2}\tau^2 = \tau^2/640 = 4 \times 10^{-7}$.  }
\label{fig:agg1}
\end{figure}

\subsubsection{Derrida-Lebowitz-Speer-Spohn (DLSS) equation}
We now consider a DLSS equation
\begin{equation*}
\partial_t\rho=\nabla\cdot\left[\rho \nabla \left( \half \delta \mathcal I (\rho)+V(x) \right)\right],
\end{equation*}
where $\mathcal{I}(\rho)=\int_\Omega |\nabla\log\rho(x)|^2\rho(x) \rd x $ and $\delta$ is the first variation operator with 
\begin{equation*}
\delta\mathcal{I}(\rho)=\|\nabla\log\rho(x)\|^2-\frac{2}{\rho(x)}\nabla\cdot(\rho(x)\nabla\log\rho(x)).
\end{equation*}
As written, the DLSS equation can be considered as the Wasserstein gradient flow of functional: $\mathcal{E}(\rho)  = \int_{\mathbb{R}^n}\frac{1}{2}\|\nabla\log\rho(x)\|^2\rho(x)+ V(x)\rho(x) \rd x$. In practice, we just replace $\beta^{-2}\tau^2$ by $\tau$ and choose $\energy(\rho)=\int V(x)\rho(x) \rd x $ in \eqref{MP}.  

When $V(x) = \frac{x^2}{2}$, the stationary solution $\rho_{\infty}$ has an explicit form
\begin{equation} \label{equi_DLSS}
\rho_{\infty}(x)=\frac{1}{\sqrt{2\pi}}e^{-\frac{x^2}{2}}.
\end{equation}
With double-Gaussian initial condition 
\[
\rho(x,0) = \frac{1}{2\sqrt{2\pi} \theta} \left( e^{-\frac{(x-1.5)^2}{2\theta^2}}  + e^{-\frac{(x+1.5)^2}{2\theta^2}} + 10^{-8}  \right), \quad \theta = 0.1\,,
\]
we plot the results in Fig. \ref{fig:DLSS1}. On the left, one sees an evolution of $\rho$ towards the equilibrium \eqref{equi_DLSS}; on the right, an exponential convergence of the energy $E(\rho)$ is demonstrated. 
\begin{figure}[h!]
\centering
\includegraphics[width=0.48\textwidth]{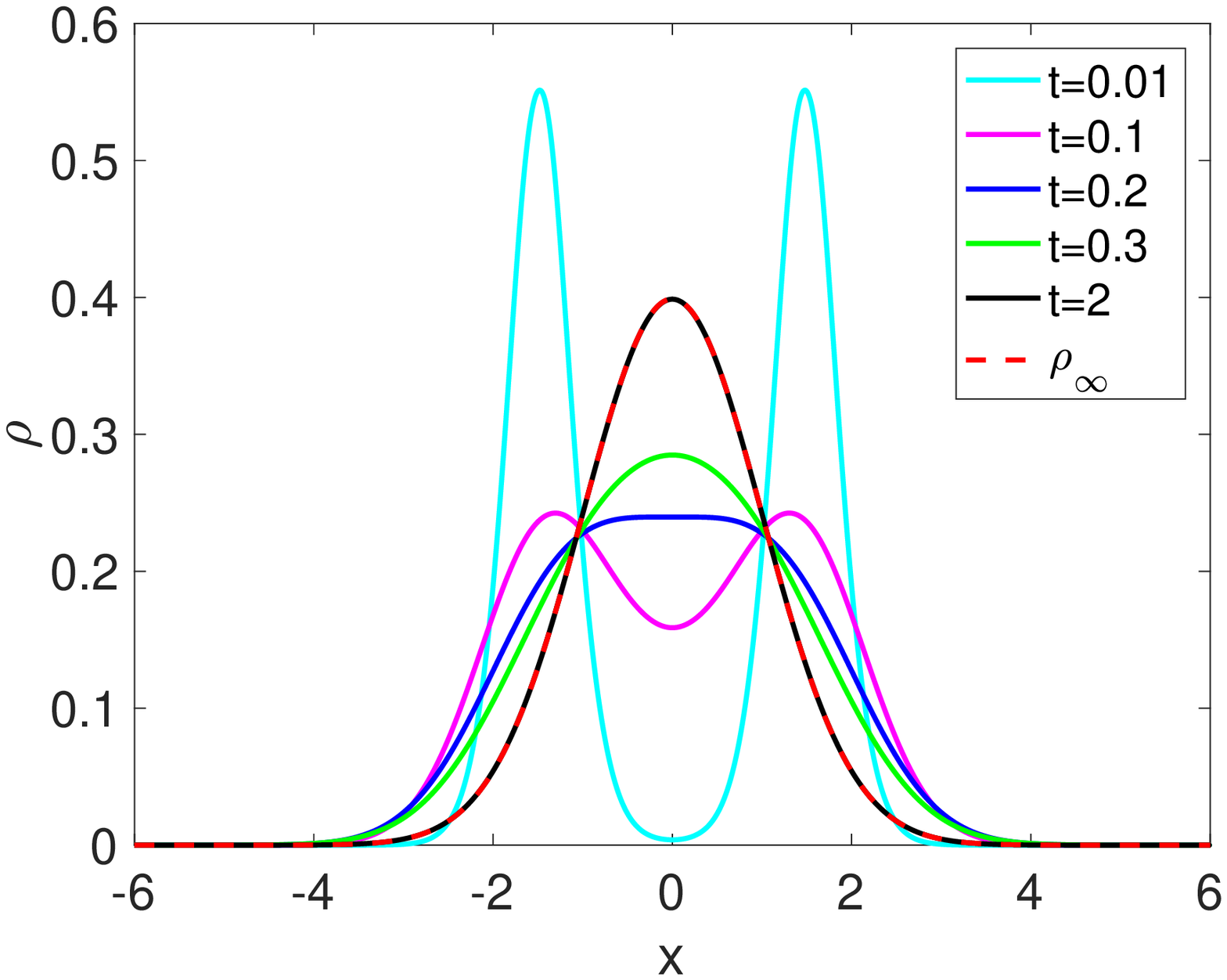}
\includegraphics[width=0.48\textwidth]{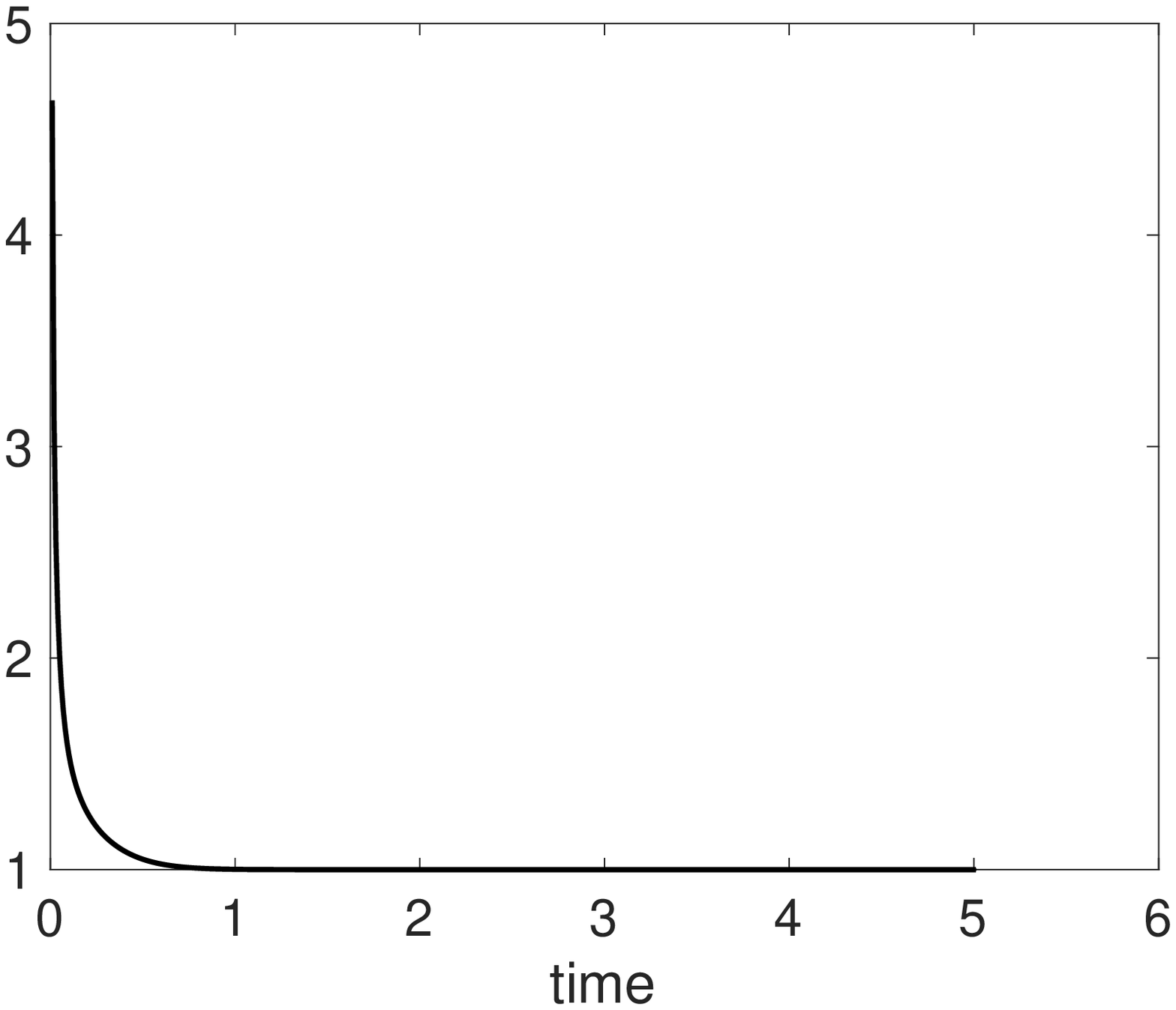}
\caption{Evolution of $\rho(x,t)$ to the one dimensional DLSS equation with $V(x) = \frac{x^2}{2}$. Numerical parameters are $\tau = 0.01$, $\Delta x = 0.01$.  }
\label{fig:DLSS1}
\end{figure}

Likewise, for a double-well potential $V(x) = 10(1-x^2)^2$, with the same initial condition, we collect the results in Fig.\ref{fig:DLSS1D_2}. Unlike the previous case, the steady state here has two bumps. 
\begin{figure}[h!]
\centering
\includegraphics[width=0.48\textwidth]{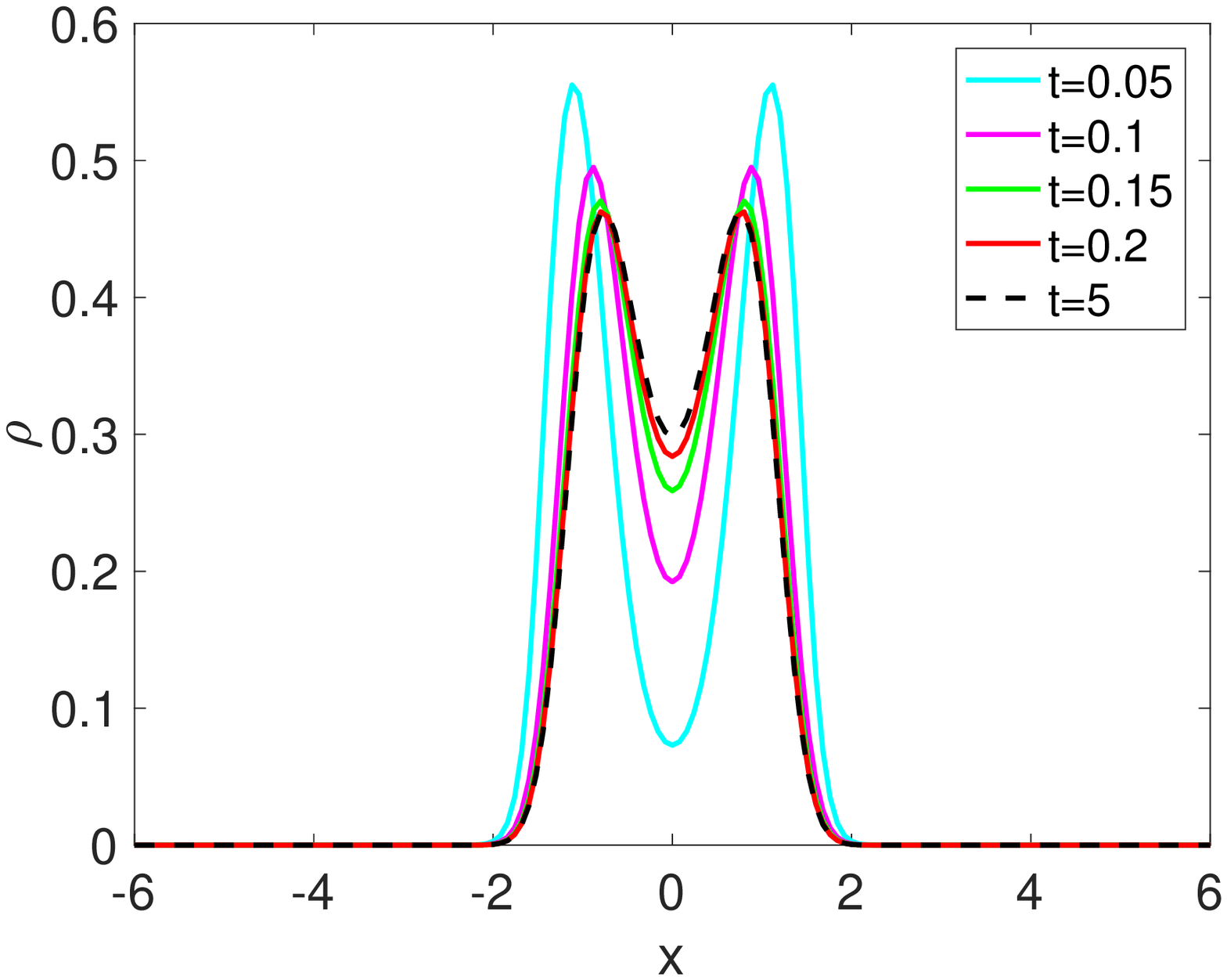}
\includegraphics[width=0.48\textwidth]{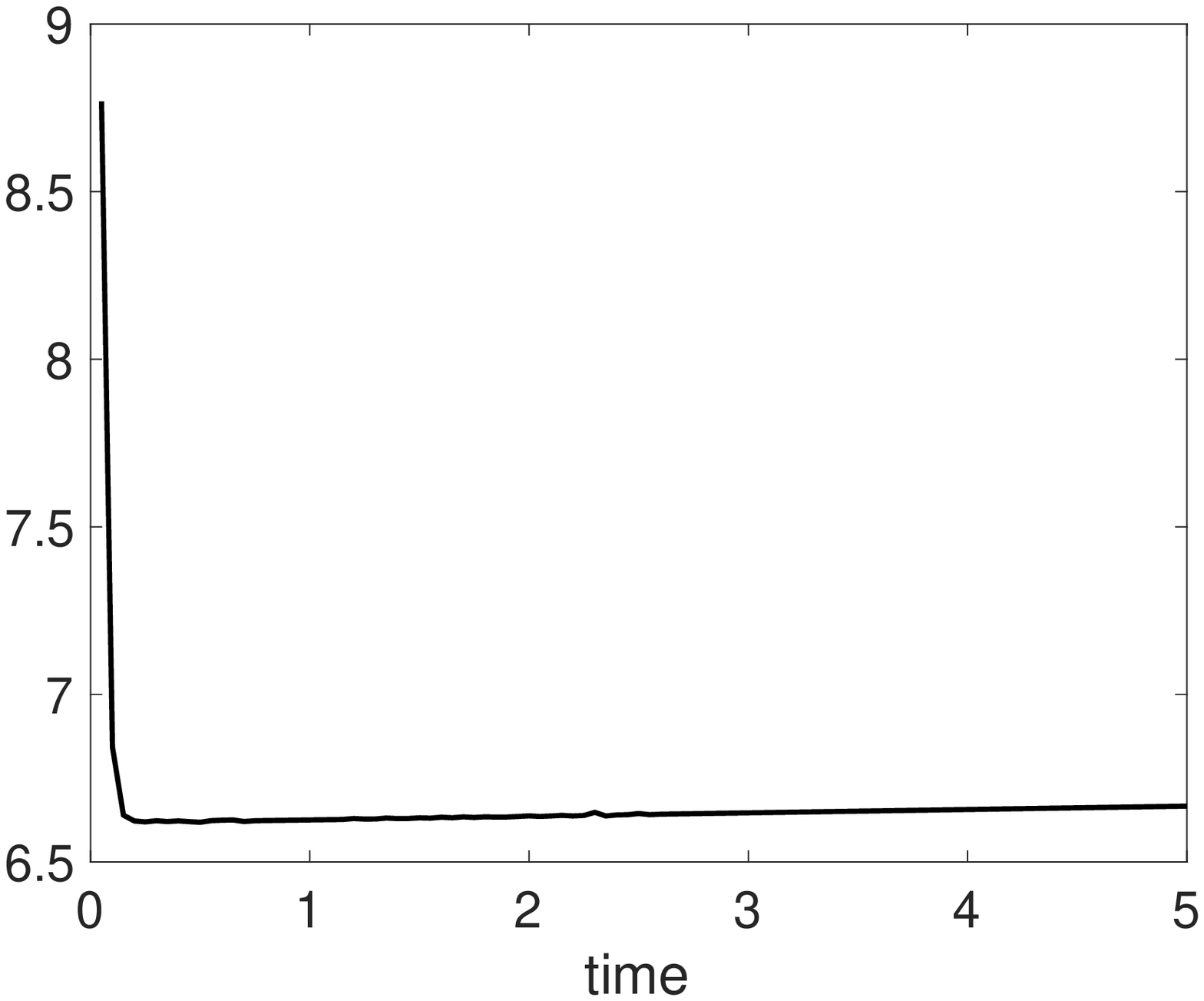}
\caption{Evolution of $\rho(x,t)$ to the one dimensional DLSS equation with $V(x) = 10(1-x^2)^2$. Numerical parameters are $\tau = 0.05$, $\Delta x = 0.08$. }
\label{fig:DLSS1D_2}
\end{figure}


\subsection{2D problem}

\subsubsection{Aggregation equation}
We first consider aggregation equation \eqref{aggeqn1} with attractive-repulsive potentials in two dimensions with interaction kernel 
\begin{equation} \label{W000}
W(x) = \frac{|x|^a}{a} - \frac{|x|^b}{b}, \quad x \in \RR^2, \quad a >b\geq 0\,,
\end{equation}
where $\frac{|x|^0}{0} = \ln (|x|)$. In this case, the repulsion near the origin determines the dimension of the support of the steady state measure, see \cite{BCLR13-2,CDM}. 

In the first example, we choose $a=4$, $b = 2$, and take the initial data to be a Gaussian 
\begin{equation} \label{Gaussian }
\rho(x,0) = \frac{1}{\sqrt{2\pi}\theta} e^{-(x-x^0)^2/\theta^2} + 10^{-5} , \quad x \in \RR^2
\end{equation}
with mean $x^0 =(1.25, 1.25)$ and variance  $\theta=0.2$. Here the steady state concentrates on a Dirac ring with radius 0.5 centered at $\rho^0$, recovering analytical results on the existence of a stable Dirac ring equilibrium \cite{BKSUV15}. We also compare the convergence in the first outer JKO time step of our regularized sequential quadratic programming with the un-regularized primal dual method \cite{CCWW18} in Fig.~\ref{convergence000}, and a much faster convergence in Newton's method is observed.

\begin{figure}[!h]
\includegraphics[width=0.31\textwidth]{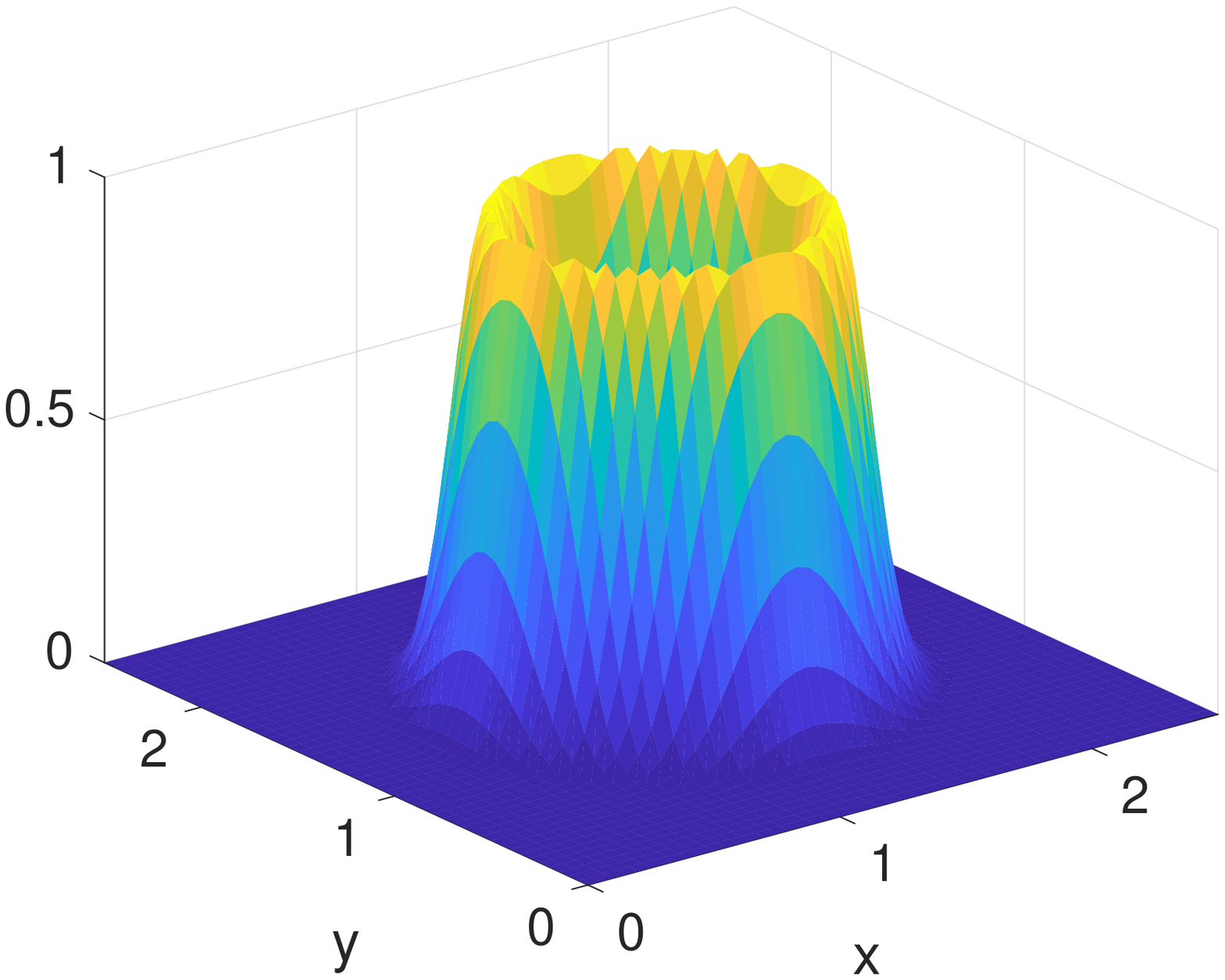}
\includegraphics[width=0.31\textwidth]{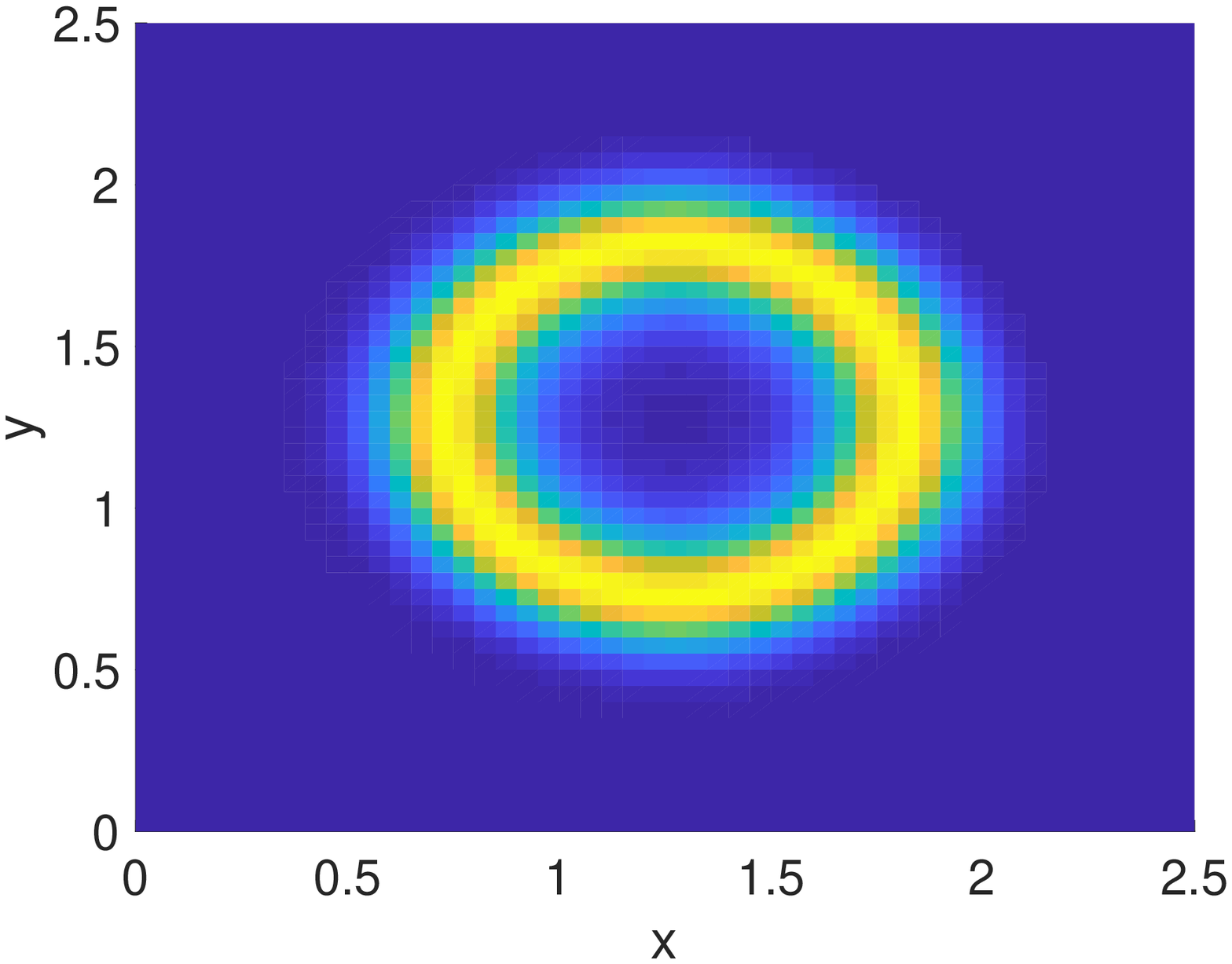}
\includegraphics[width=0.31\textwidth]{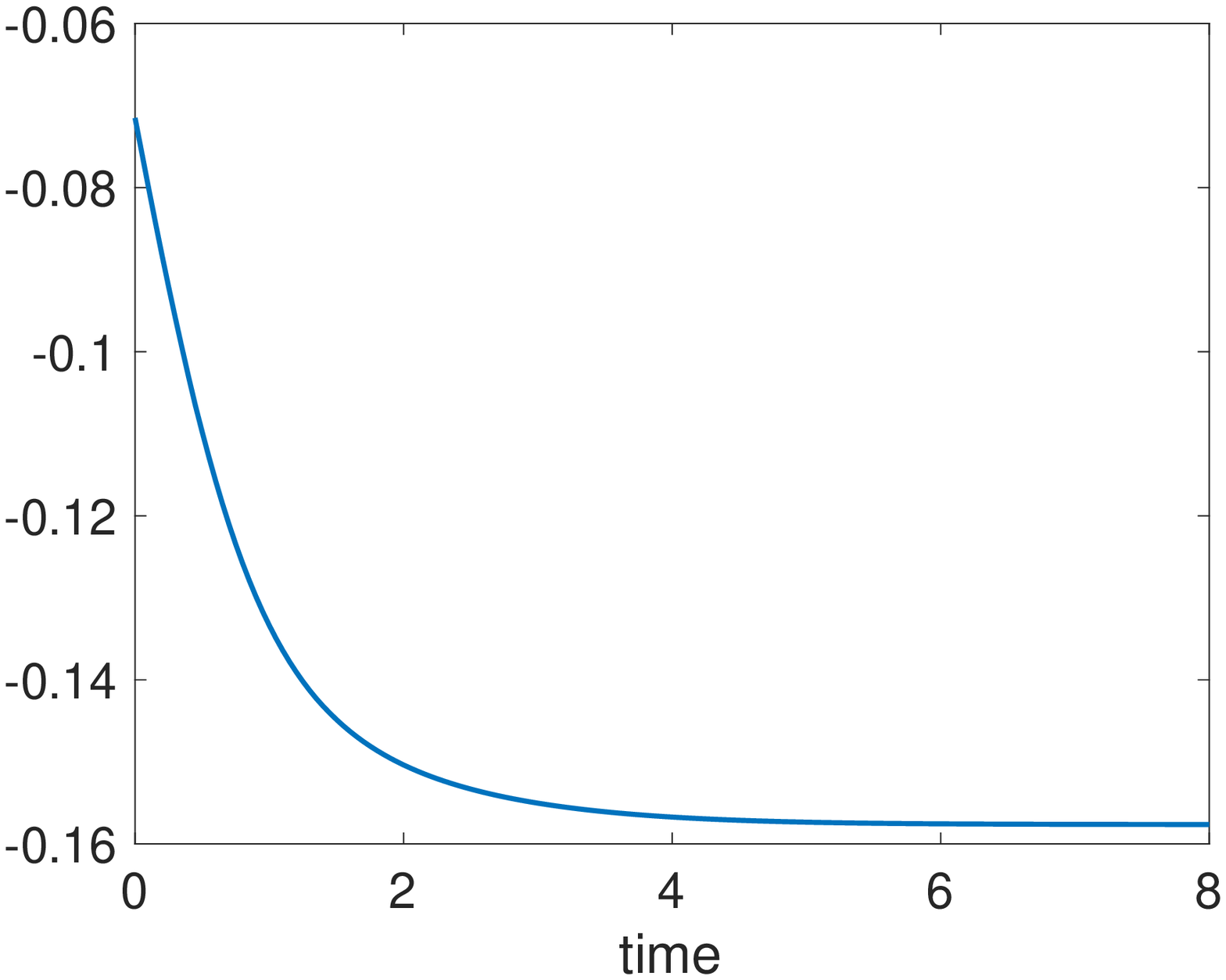}
\caption{We compute the steady state of a solution to the two dimensional aggregation equation \eqref{aggeqn1} with interaction kernel $W(x) = |x|^4/4 - |x|^2/2$, which is a Dirac ring with radius $0.5$ centered at $x^0$. Our computational domain is $(x,y) \in [0,2.5]^2$ and mesh sizes are $\tau = 0.04$, $\Delta x = \Delta y = 0.05$. Regularization constant is $\beta^{-2} \tau^2 = 3.2 \times 10^{-6}$ and Hessian is approximated with $ \tilde{\beta}^{-2} = 80 \beta ^{-2}$. The result showing here are computed at final time $t = 10$. Left: side view of equilibrium. Center: bird's eye view of equilibrium. Right: rate of decay of energy as solution approaches equilibrium.}
\end{figure}

\begin{figure}[!h]
\includegraphics[width=0.45\textwidth]{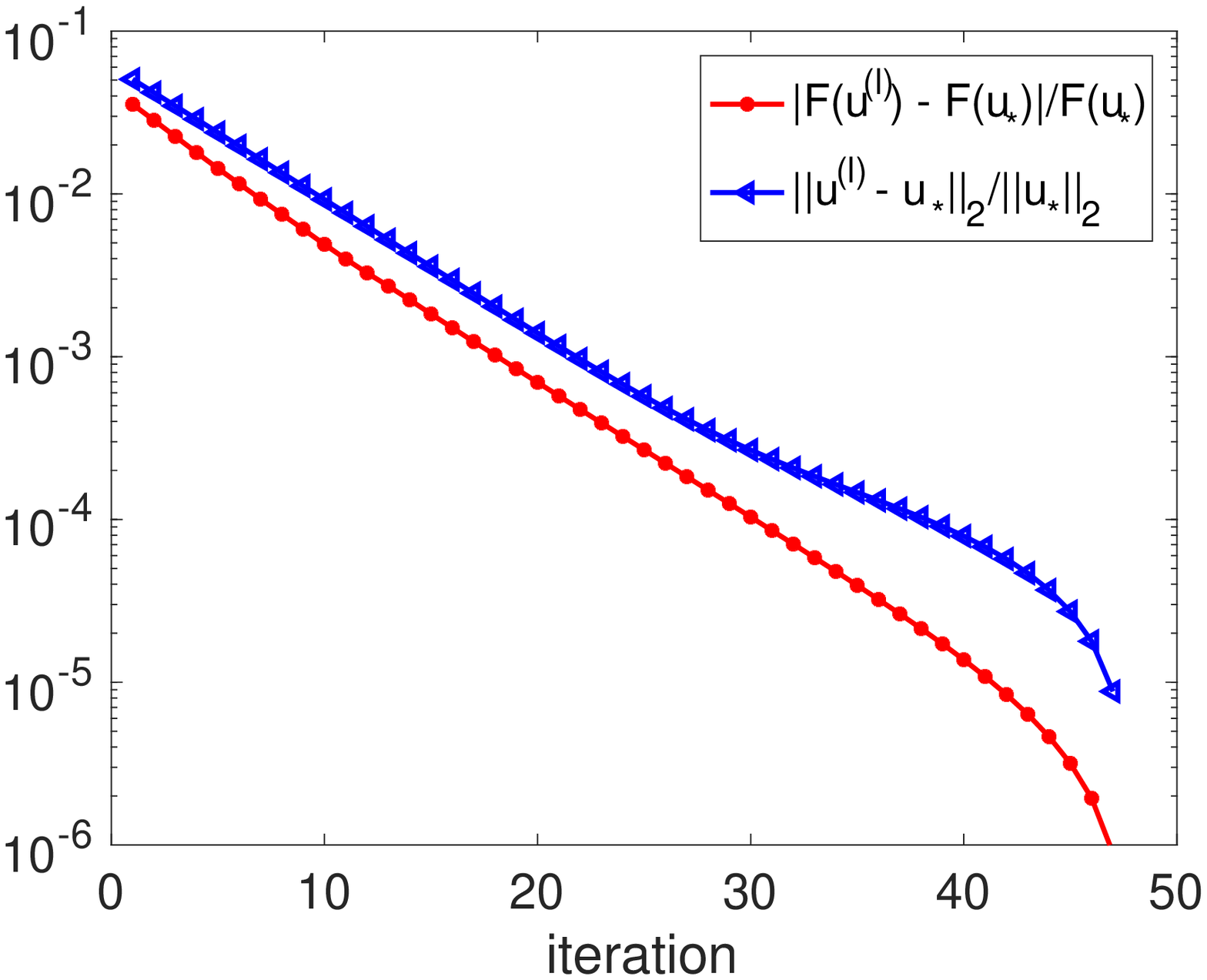}
\includegraphics[width=0.45\textwidth]{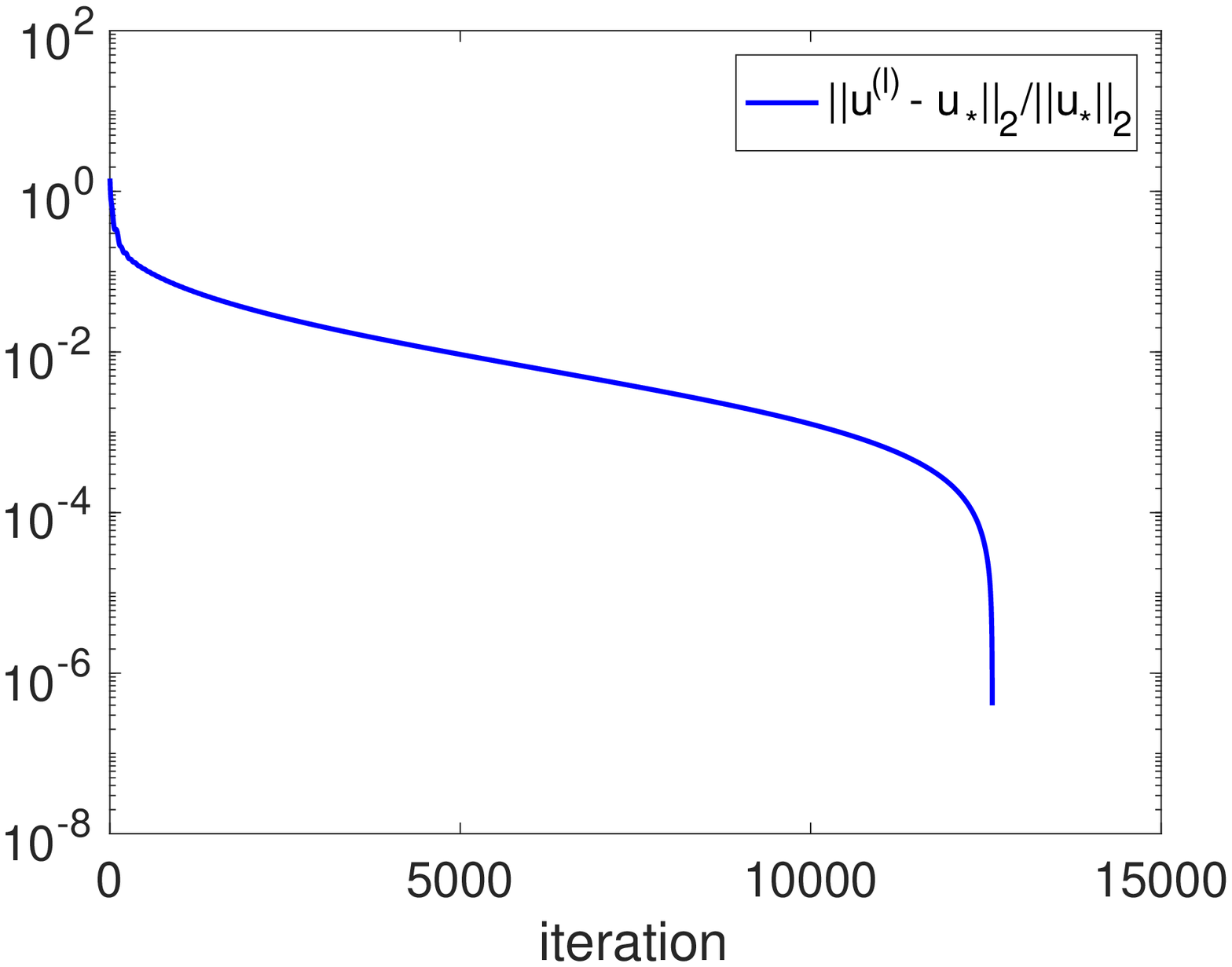}
\caption{Check of convergence in the first time step of computing the 2D aggregation equation with interaction kernel $W(x) = |x|^4/4 - |x|^2/2$. Left: proximal Newton method. Right: primal dual method.}
\label{convergence000}
\end{figure}

In the second example, we consider interaction kernel \eqref{W000} with different parameters: $a = 2$ and $b =0$ and the results are displayed in Fig. \ref{fig:agg2_2d}. We observe that the solution converges to a characteristic function on the disk of radius 1, centered at $x^0$, recovering analytic results on solutions of the aggregation equation with Newtonian repulsion \cite{FHK11, BLL12}. 
\begin{figure}[!h]
\includegraphics[width=0.31\textwidth]{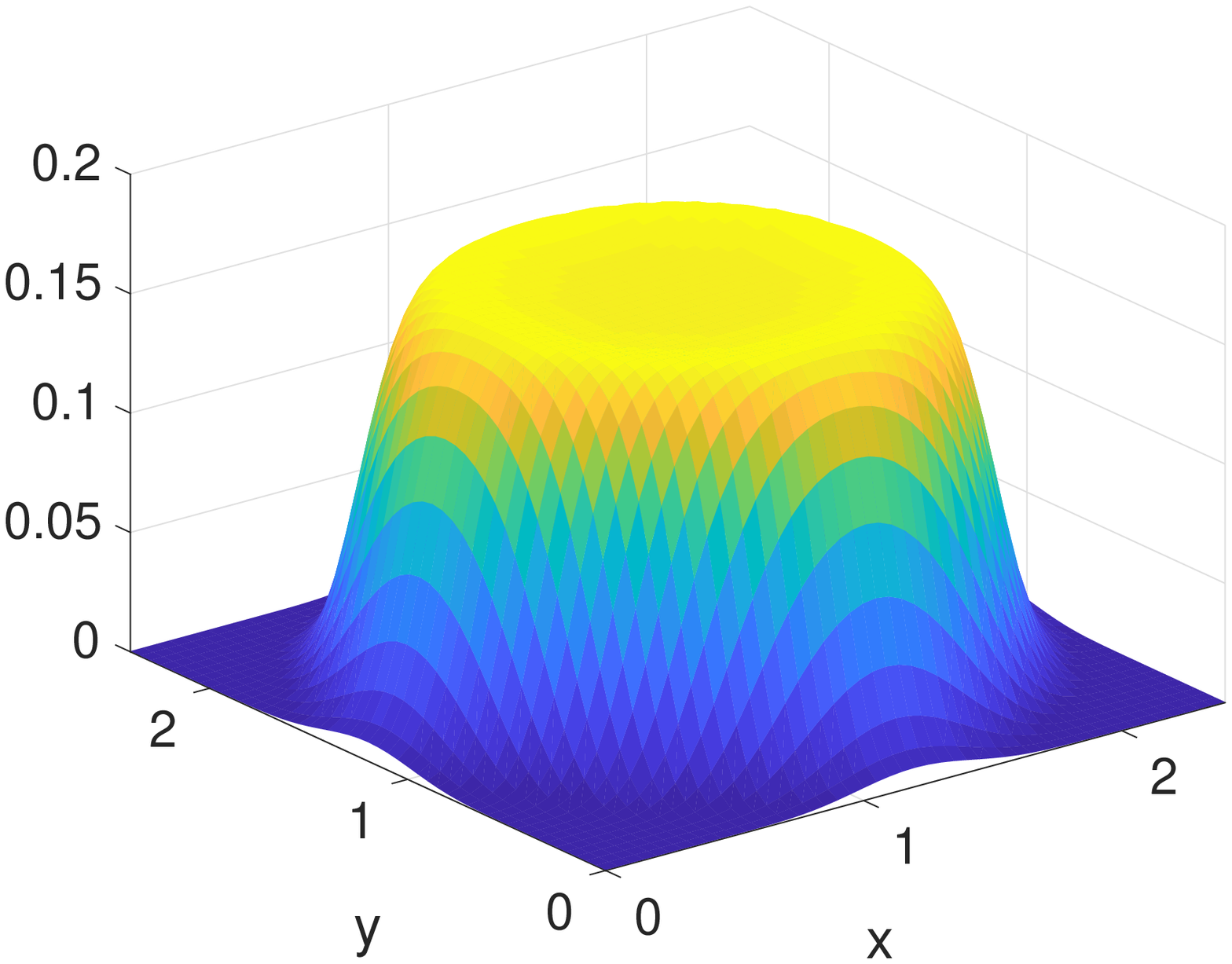}
\includegraphics[width=0.31\textwidth]{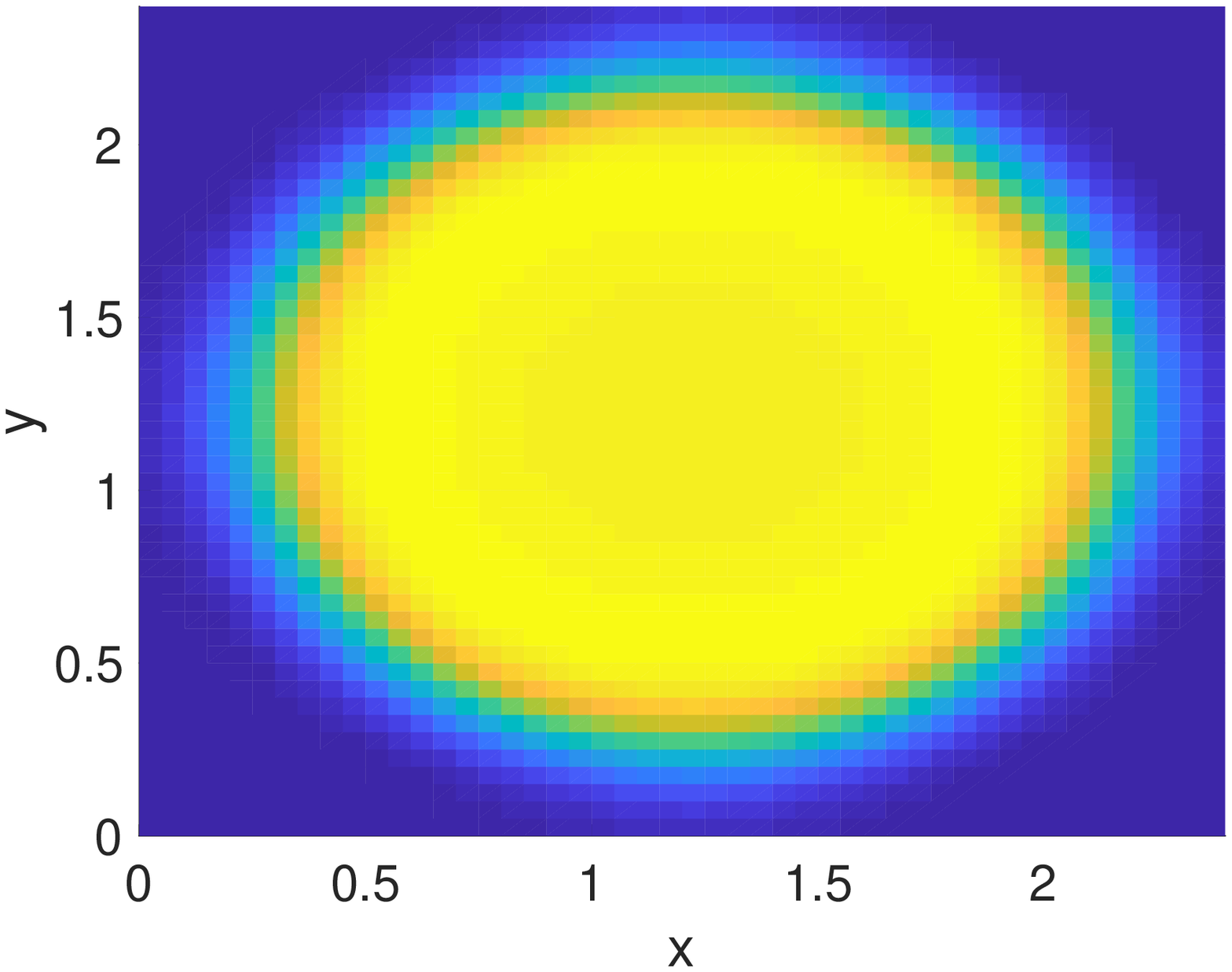}
\includegraphics[width=0.31\textwidth]{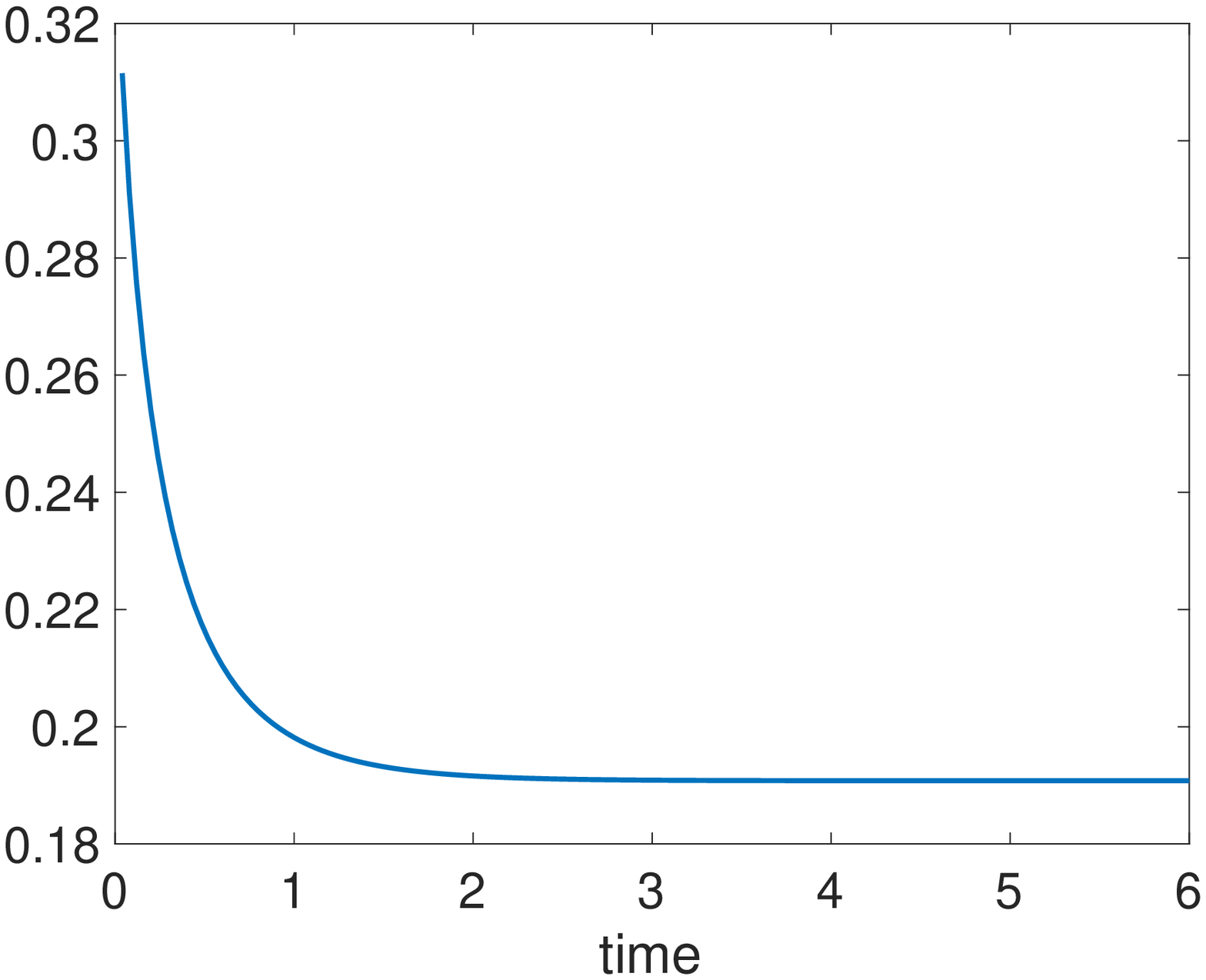}
\caption{We compute the steady state of a solution to the two dimensional aggregation equation \eqref{aggeqn1} with interaction kernel $W(x) = |x|^2/2 - \ln (|x|)$, which is a characteristic function on a disk of radius 1. Our computational domain is $(x,y) \in [0,2.5]^2$ and mesh sizes are $\tau = 0.04$, $\Delta x = \Delta y = 0.05$. Regularization constant is $\beta^{-2} \tau^2 = 3.2 \times 10^{-6}$ and Hessian is approximated with $\tilde{\beta} ^{-2}= 40 \beta^{-2}$. The result showing here are computed at final time $t = 6$. Left: side view of equilibrium. Center: bird's eye view of equilibrium. Right: rate of decay of energy as solution approaches equilibrium.}
\label{fig:agg2_2d}
\end{figure}

\subsubsection{Aggregation drift equation}
We compute solutions of aggregation-drift equations
\[
\partial_t \rho = \nabla \cdot (\rho \nabla W \ast \rho) + \nabla \cdot (\rho \nabla V) ,
\]
where $W(x) = \frac{|x|^2}{2} - \ln(|x|)$ and $V(x) = - \frac{1}{4} \ln (|x|)$. As shown in the analytical results \cite{CK14, CHM14}, the steady state is a characteristic function on a torus, with inner and outer radius given by
$R_1 = \frac{1}{2}$, $R_2 = \sqrt{\frac{5}{4}}$. The initial condition consists of five Gaussians, which is non-radially symmetric. The evolution of $\rho$ towards equilibrium along with the energy decay in time are displayed in Fig.~\ref{fig:aggdrift2_2D}.

\begin{figure}[!h]
\includegraphics[width=0.32\textwidth]{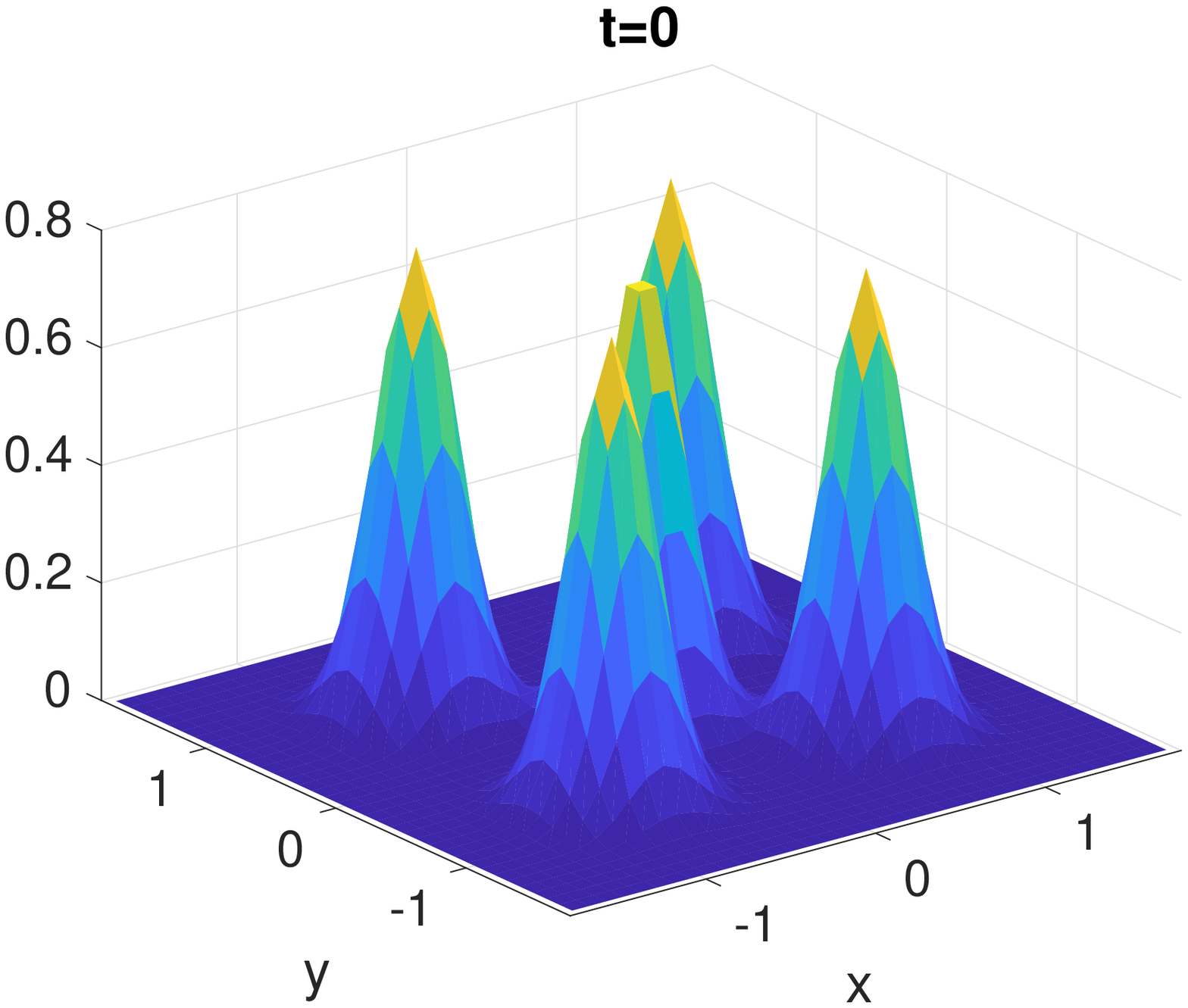}
\includegraphics[width=0.32\textwidth]{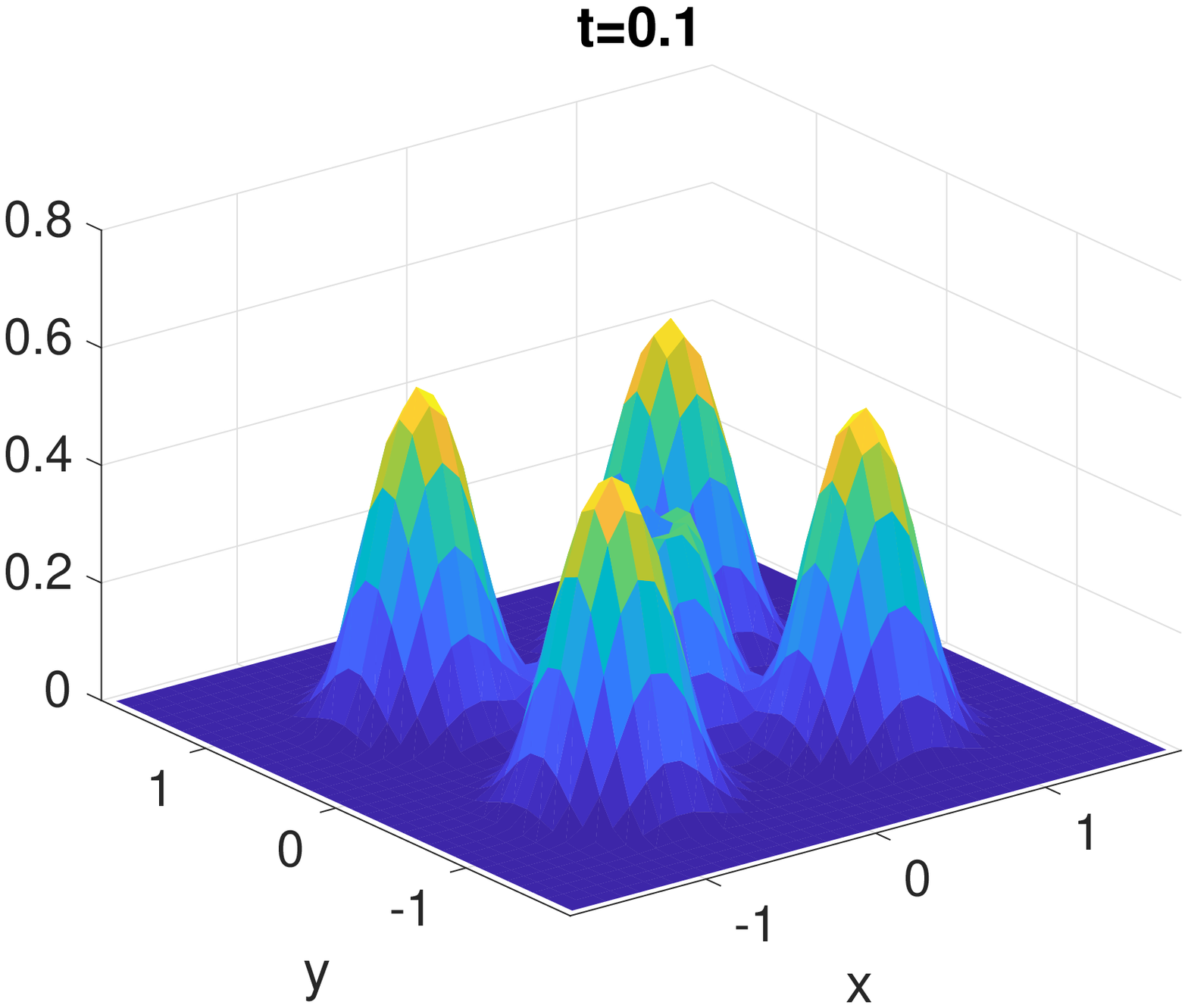}
\includegraphics[width=0.32\textwidth]{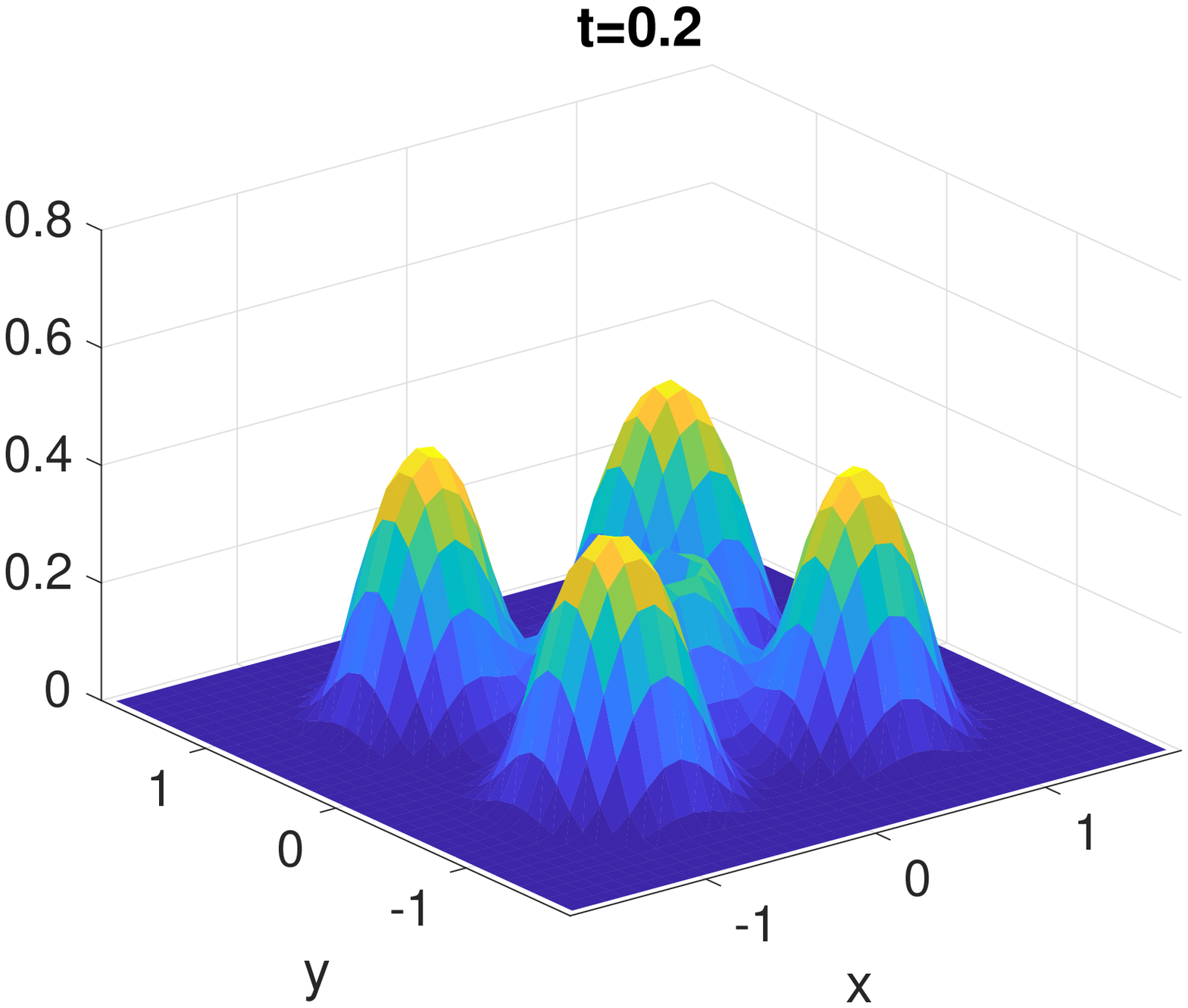}
\\
\includegraphics[width=0.32\textwidth]{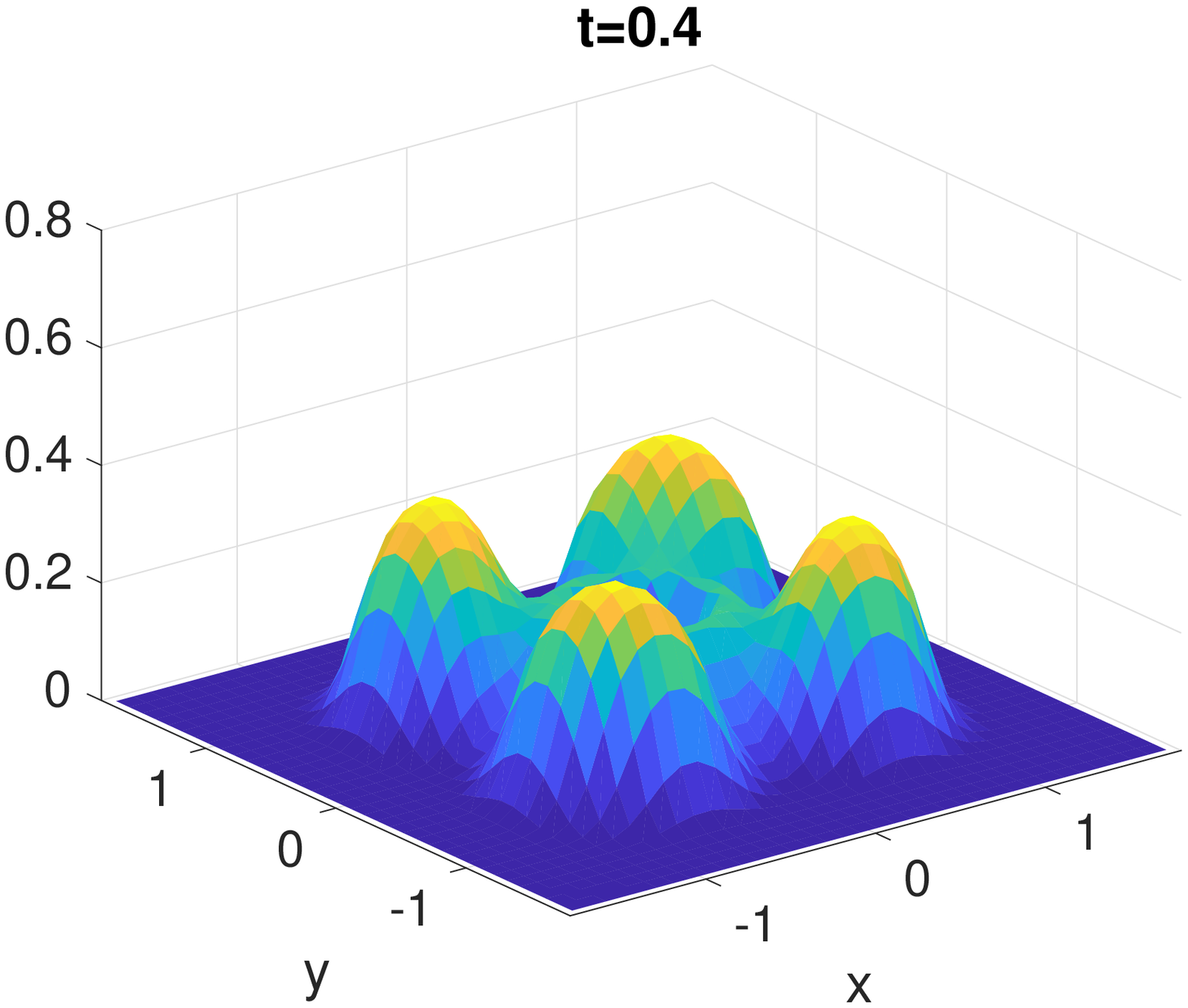}
\includegraphics[width=0.32\textwidth]{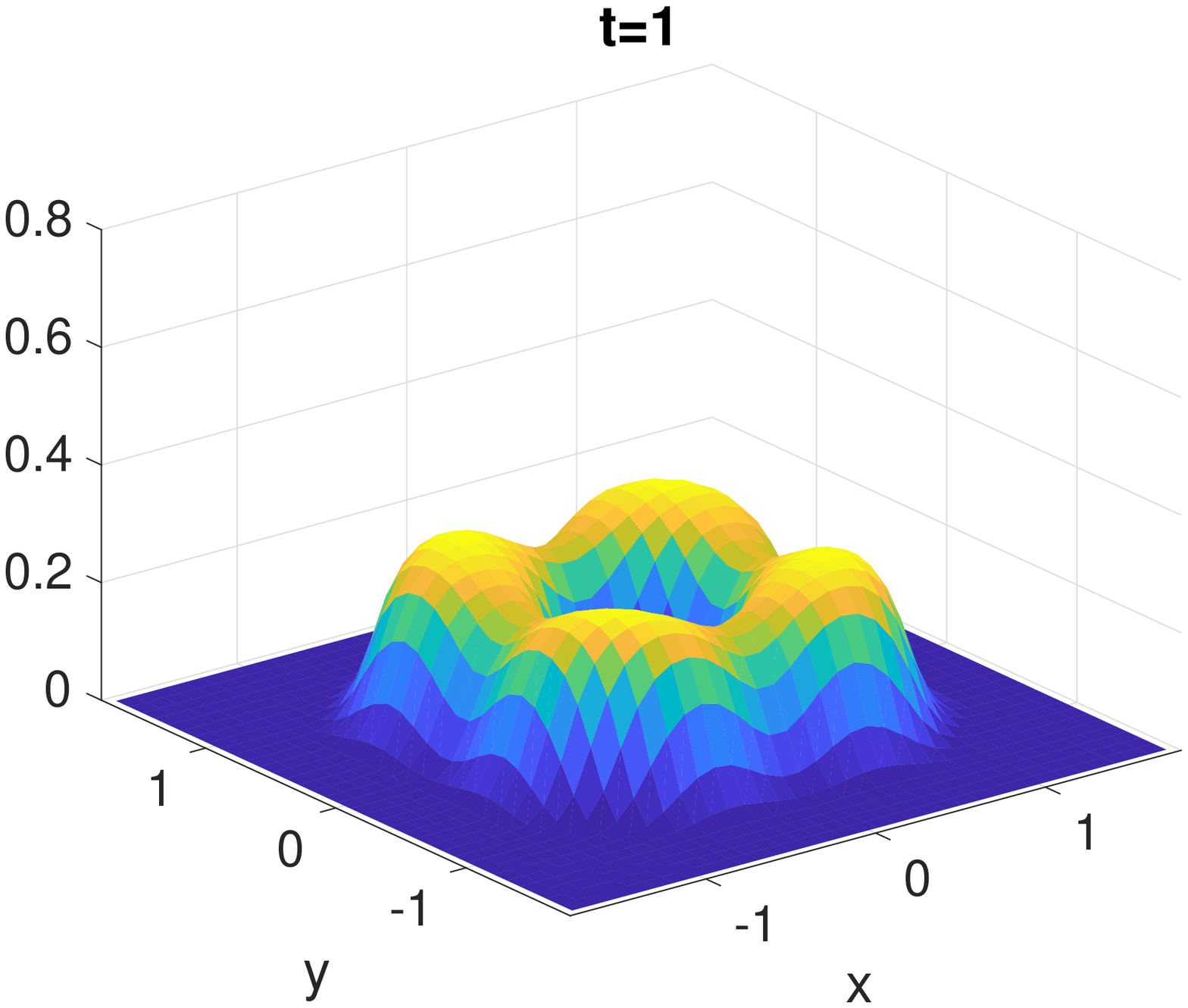}
\includegraphics[width=0.32\textwidth]{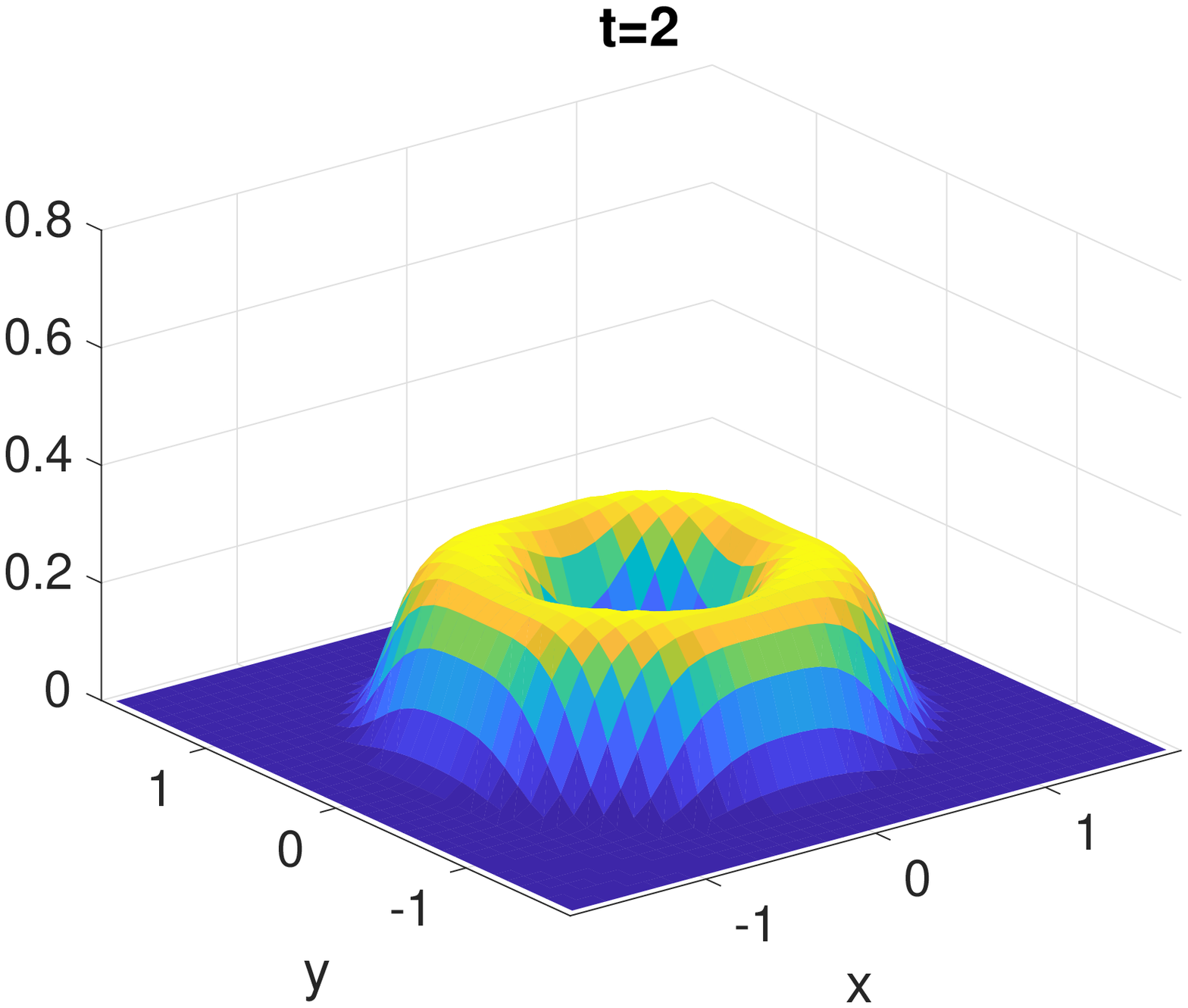}
\\
\includegraphics[width=0.32\textwidth]{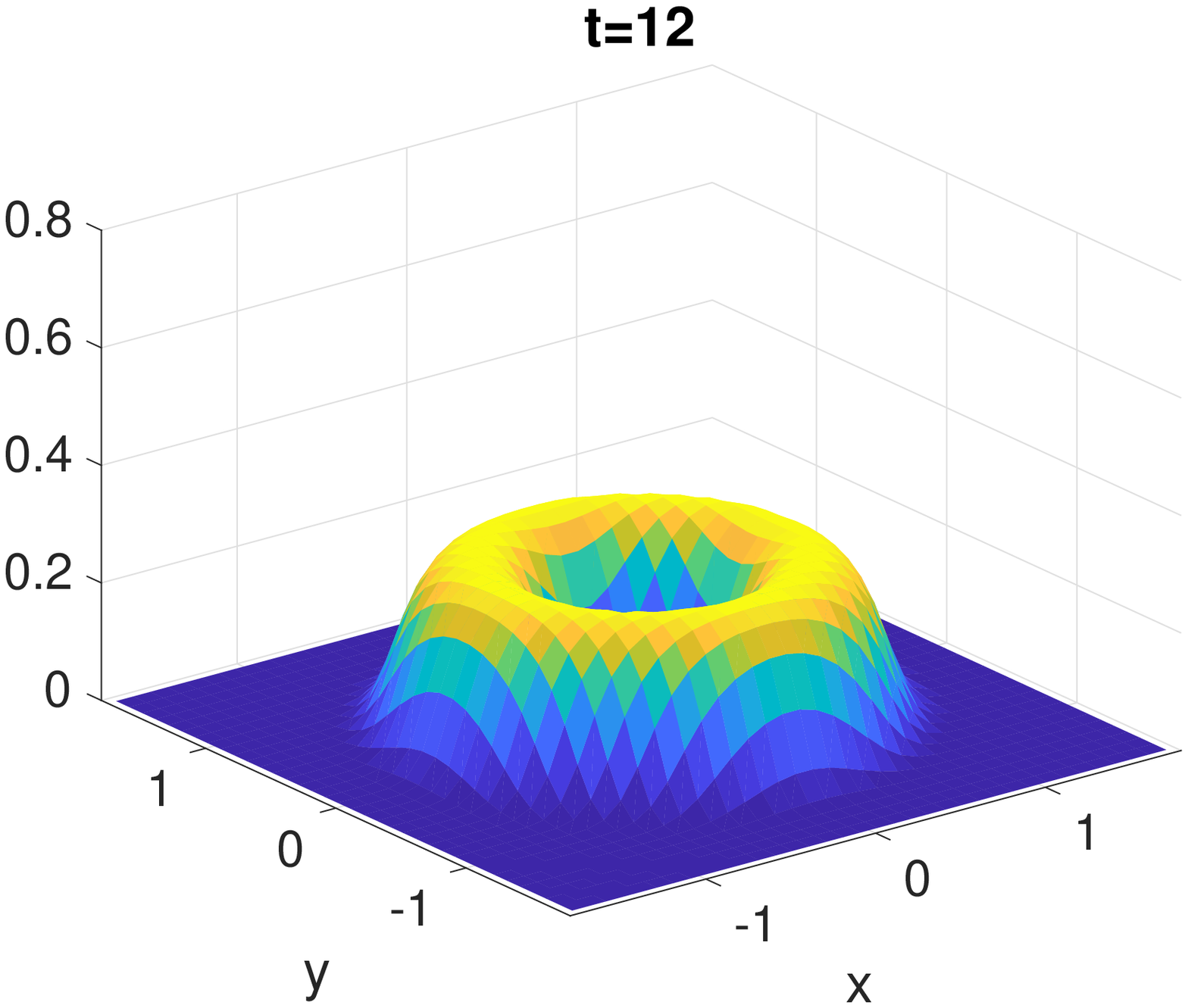}
\includegraphics[width=0.32\textwidth]{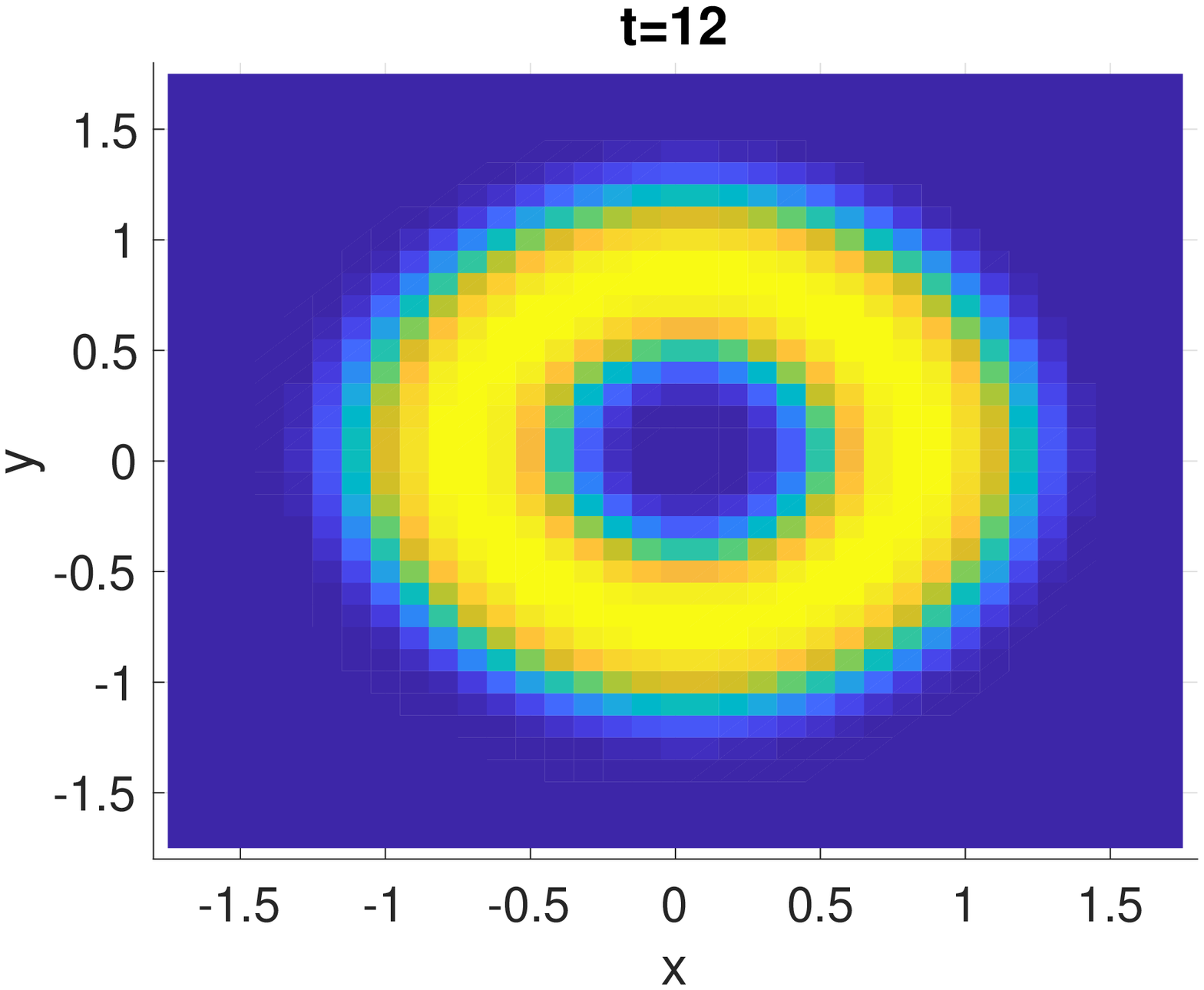}
\includegraphics[width=0.32\textwidth]{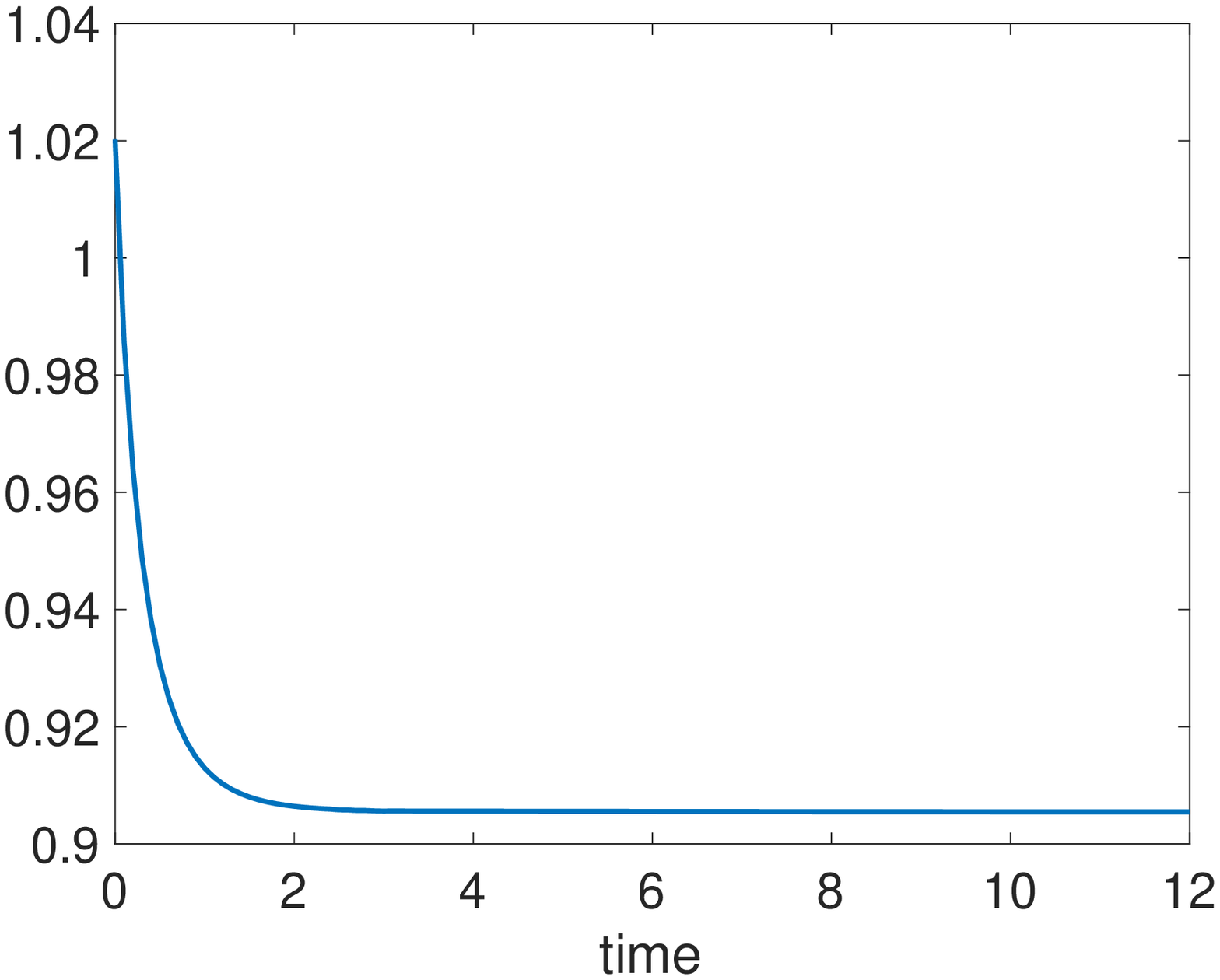}
\caption{The evolution of 2D aggregation drift equation with $W(x) = \frac{|x|^2}{2} - \ln(|x|)$ and $V(x) = - \frac{1}{4} \ln (|x|)$. The steady state is a milling profile with inner and outer radius $R_1 = \frac{1}{2}$, $R_2 = \sqrt{\frac{5}{4}}$. Our computational domain is $[-1.8,1.8]^2$, and mesh sizes are $\Delta x = \Delta y = 0.1$, $\tau = 0.1$. Regularization parameter is $\beta \tau^2= 1.25\times 10^{-5}$, and $\tilde{\beta} = 80 \beta$ is used in approximating the Hessian.}
\label{fig:aggdrift2_2D}
\end{figure}

\subsubsection{Aggregation diffusion equation}
Consider the aggregation diffusion equations
\begin{equation} \label{aggdiffeqn}
\partial_t \rho = \nabla \cdot (\rho \nabla W*\rho) + \nu \Delta \rho^m , \quad W: \Rd \to \R, \quad m \geq 1\,.
\end{equation}
When the interaction kernel $W$ is attractive, the competition between the nonlocal aggregation $ \nabla \cdot (\rho \nabla W*\rho)$ and nonlinear diffusion $\nu \Delta \rho^m$ causes solutions to behave differently in various regimes---either finite time blow up or globally exist in time, see the survey \cite{CCY}. In Fig.~\ref{fig:AggDiff}, we take $W(x) = -\frac{e^{-|x|^2}}{\pi}$, $m=3$, $\nu = 0.1$. Computational domain is chosen as $[-3, 3]^2$, initial data is $\rho(0,x,y) = \chi_{|x| \leq 2.5, |y| \leq 2.5}$.

\begin{figure}[!h]
\includegraphics[width=0.32\textwidth]{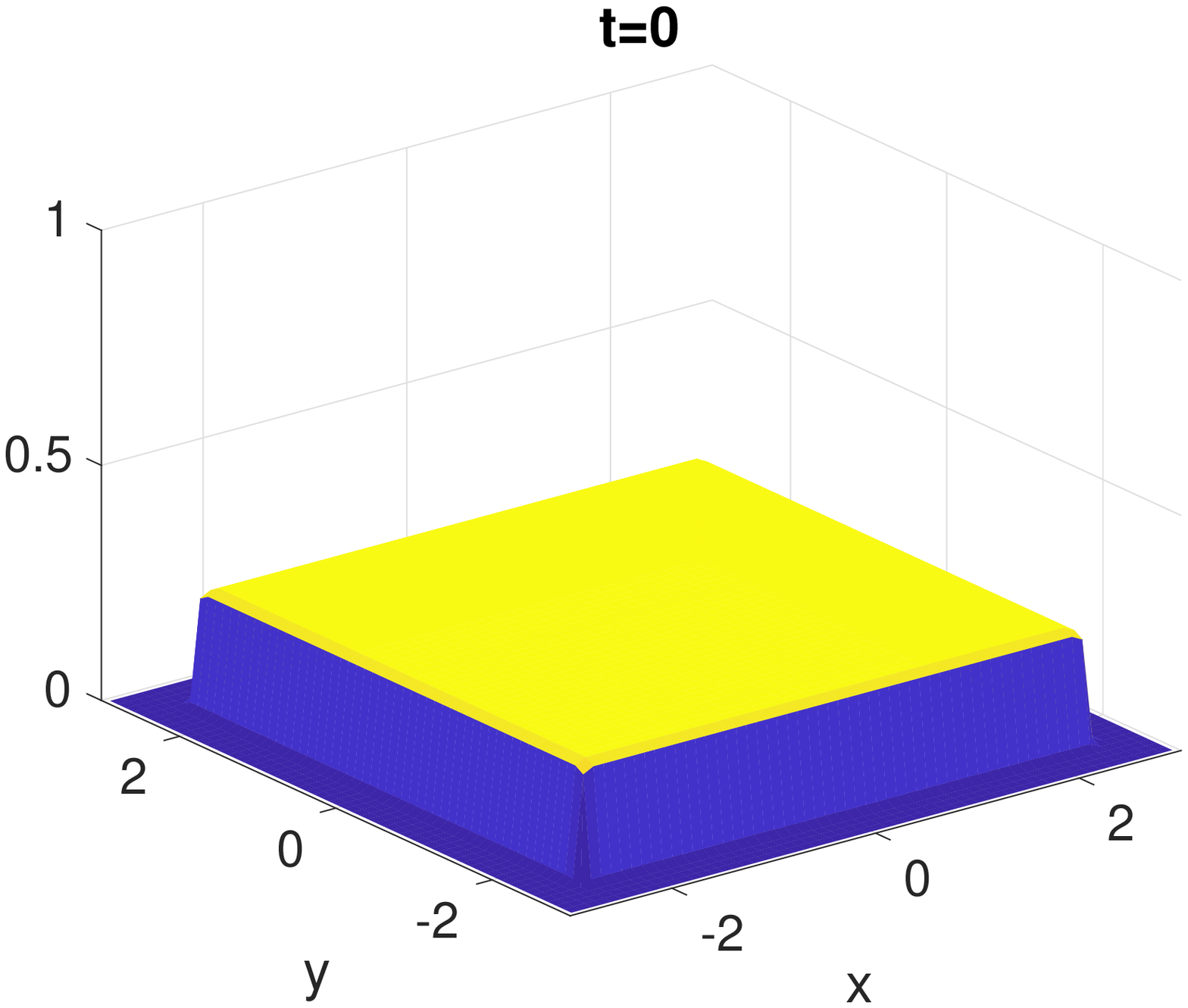}
\includegraphics[width=0.32\textwidth]{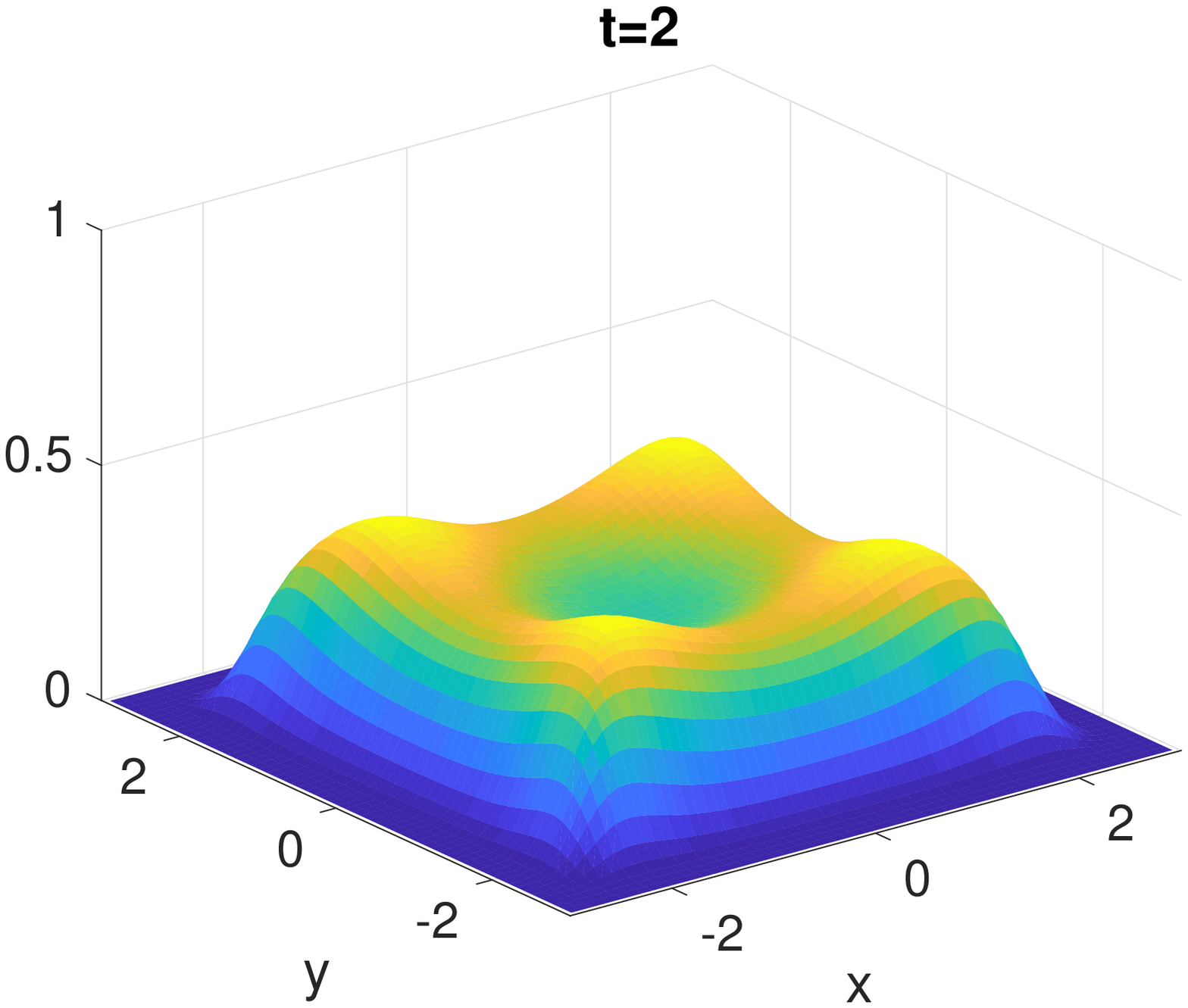}
\includegraphics[width=0.32\textwidth]{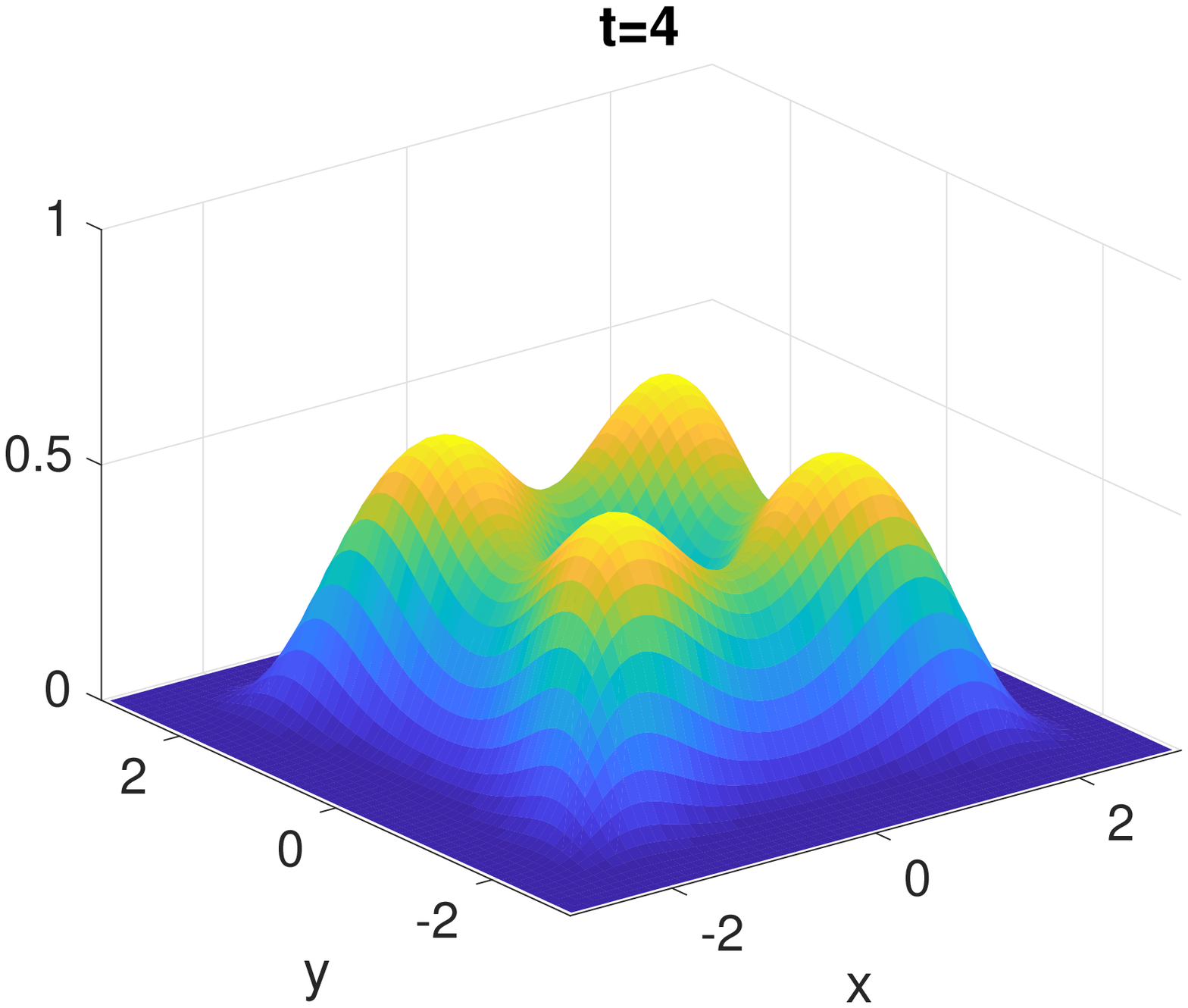}
\\
\includegraphics[width=0.32\textwidth]{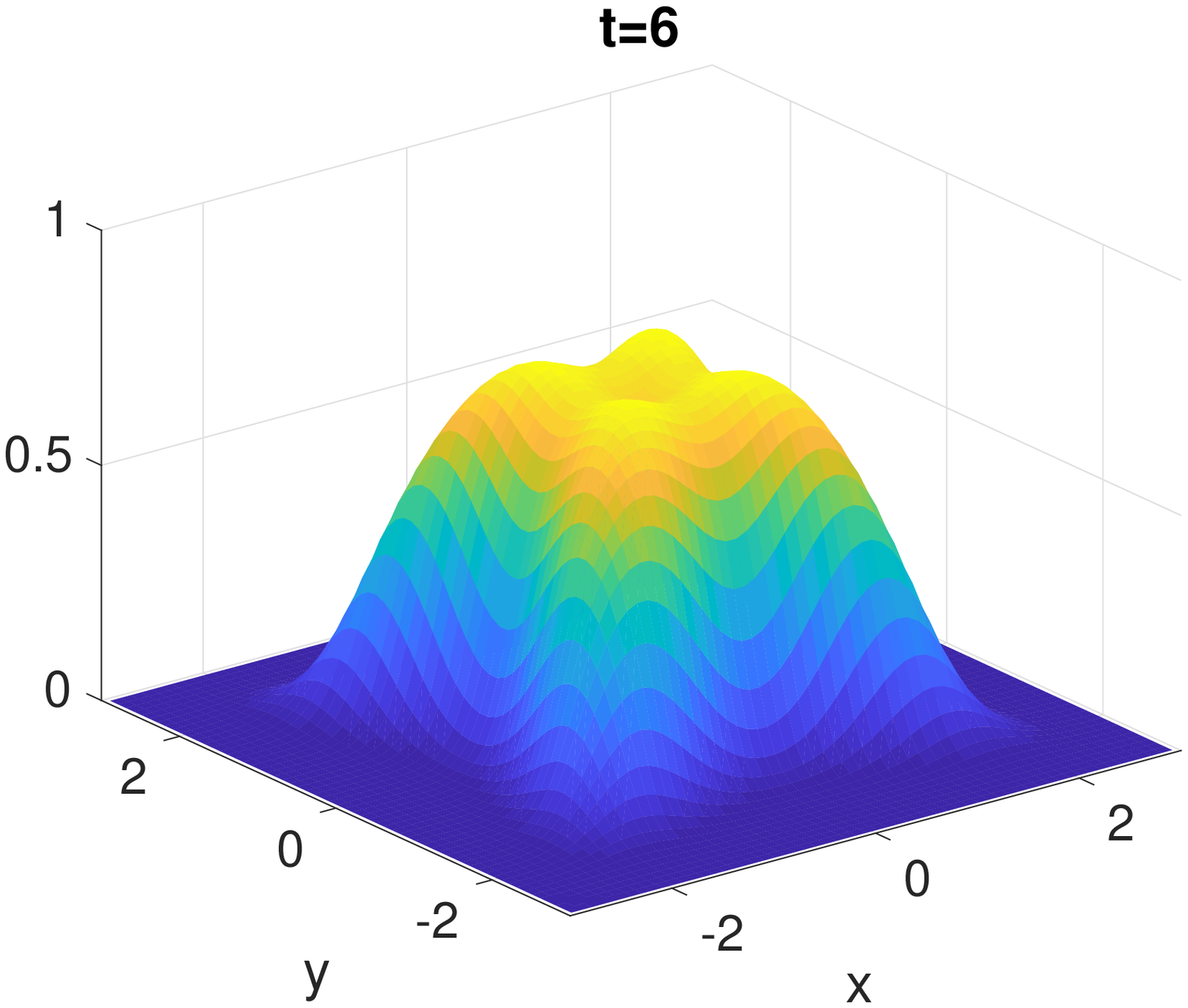}
\includegraphics[width=0.32\textwidth]{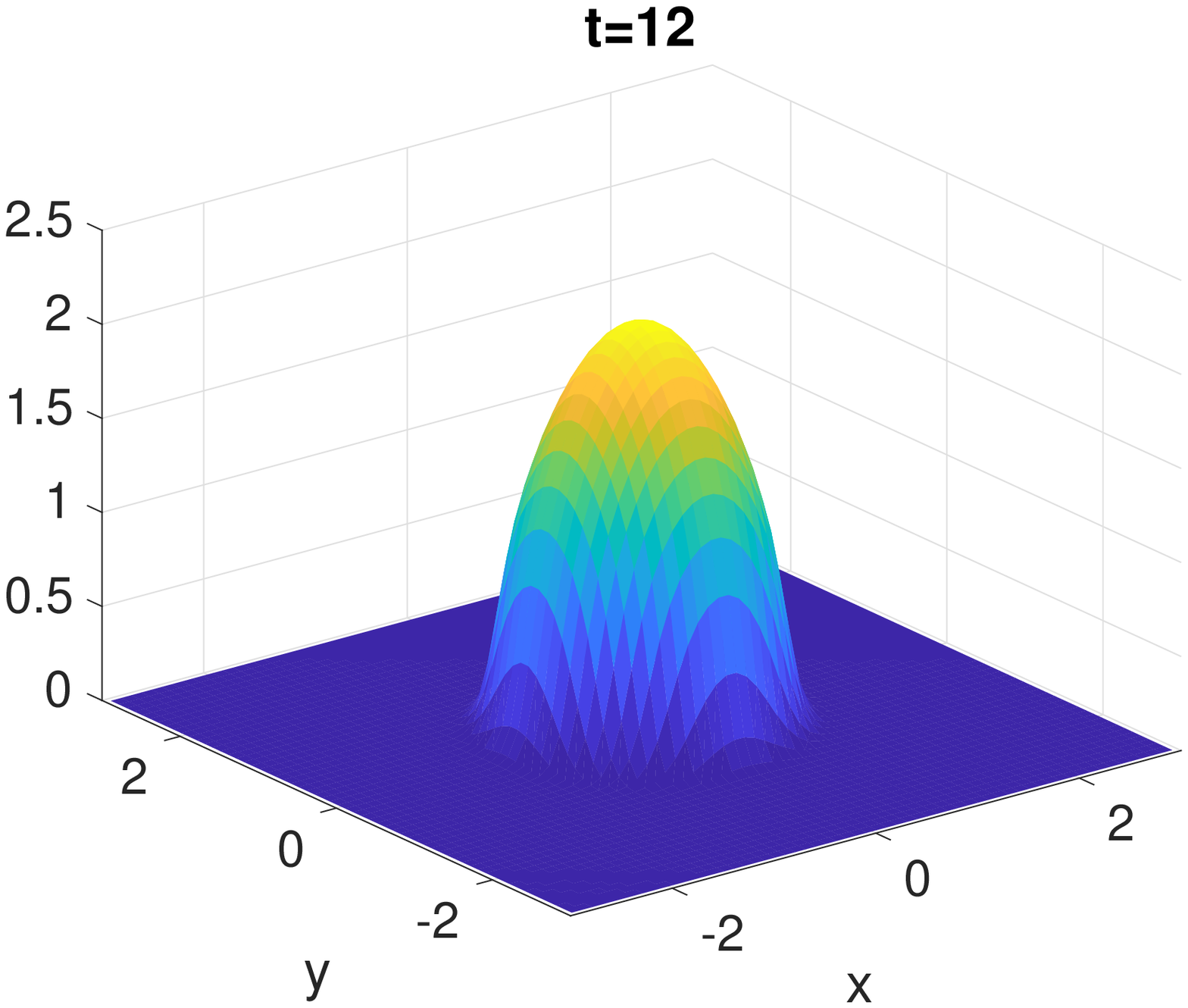}
\includegraphics[width=0.32\textwidth]{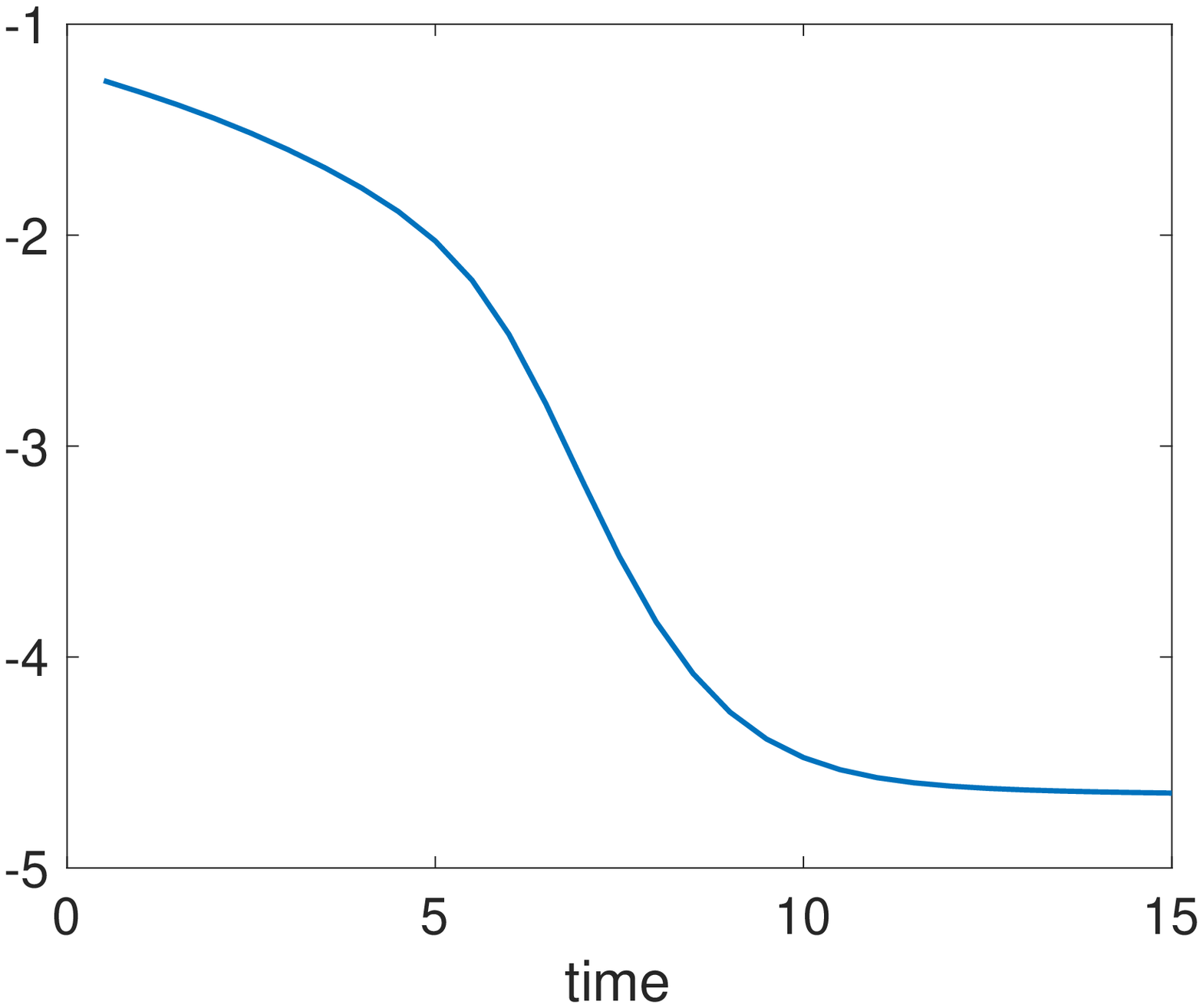}
\caption{The evolution of 2D aggregation diffusion equation with $W(x) = -\frac{e^{-|x|^2}}{\pi}$ and $U(\rho) = 0.05 \rho^3$. The computational domain is $[-3,3]^2$, and meshes are $\Delta x = \Delta y = 0.1$, $\tau = 0.5$. Regularization parameter is $\beta^{-2} \tau^2= 3.125\times 10^{-4}$, and $\tilde{\beta}^{-2} = 40 \beta^{-2}$ is used in approximating the Hessian. }
\label{fig:AggDiff}
\end{figure}

\subsubsection{The DLSS model}
We close the section by computing a two dimensional DLSS equation
\begin{equation*}
\partial_t\rho=\nabla\cdot\left[\rho \nabla \left( \half \delta \mathcal I (\rho )+V(x) \right)\right]
\end{equation*}
with $V(x) = \frac{|x|^2}{2}$ and initial condition consisting of four Gaussians. In this case, we do not need a regularization and Hessian is computed exactly. As seen in Fig. \ref{fig:dlss_2d}, the density $\rho$ converges to the equilibrium $\rho_\infty = \frac{1}{\sqrt{2\pi}} e^{-\frac{x^2}{2}}$ very rapidly. In the bottom center plot, we also compare slice of the steady state computed via our method with the exact equilibrium, and observe a good match. 

\begin{figure}[!h]
\includegraphics[width=0.32\textwidth]{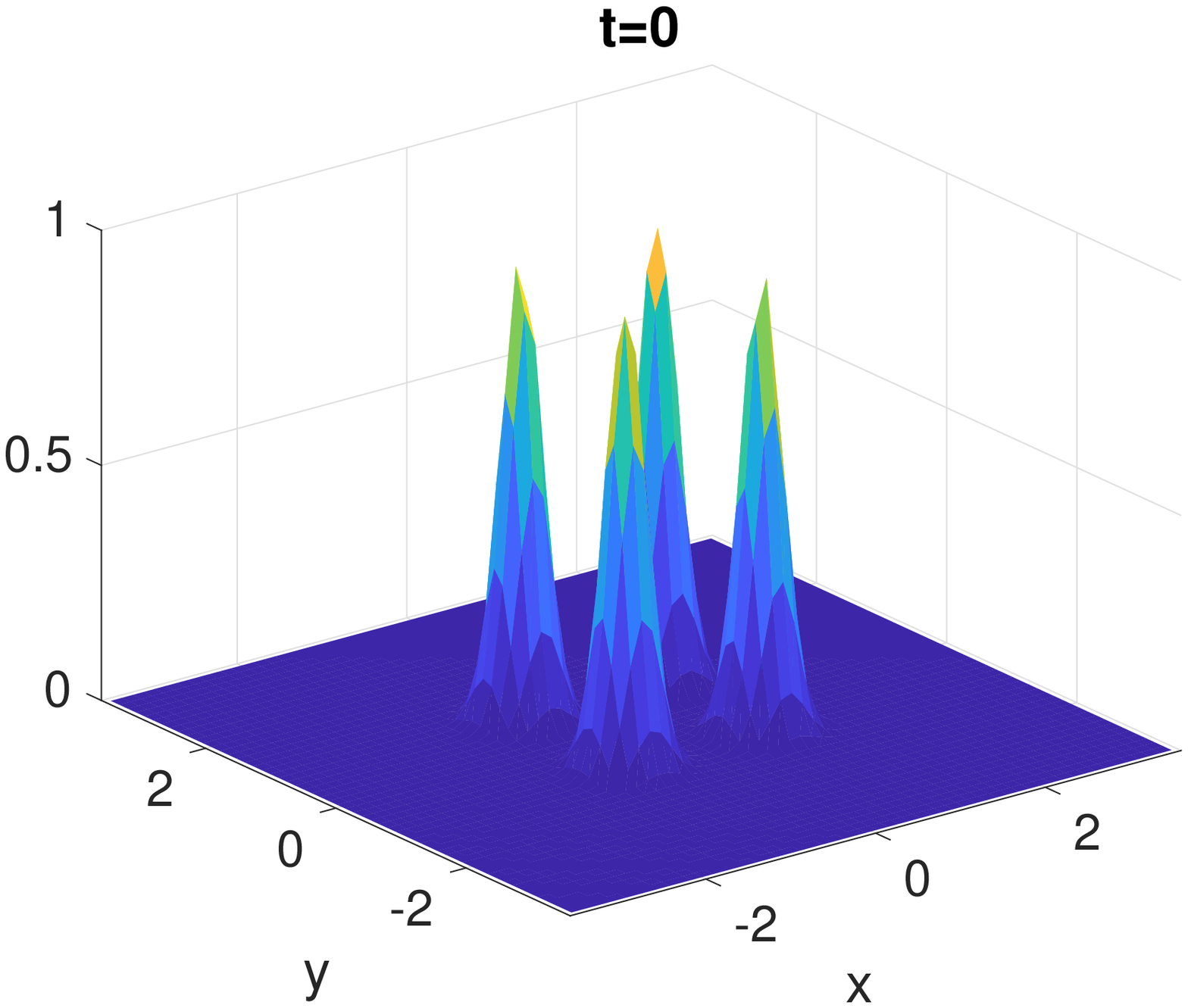}
\includegraphics[width=0.32\textwidth]{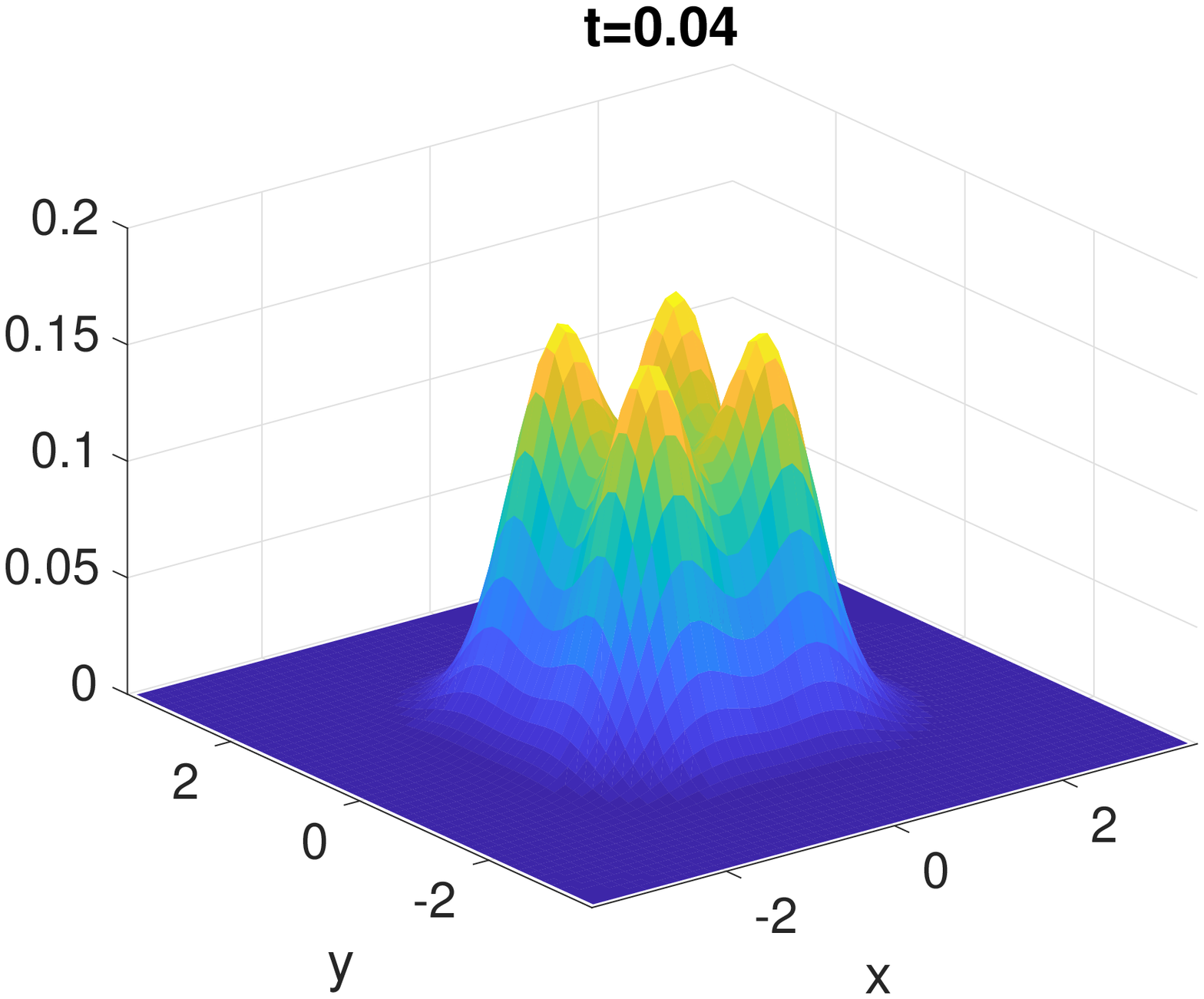}
\includegraphics[width=0.32\textwidth]{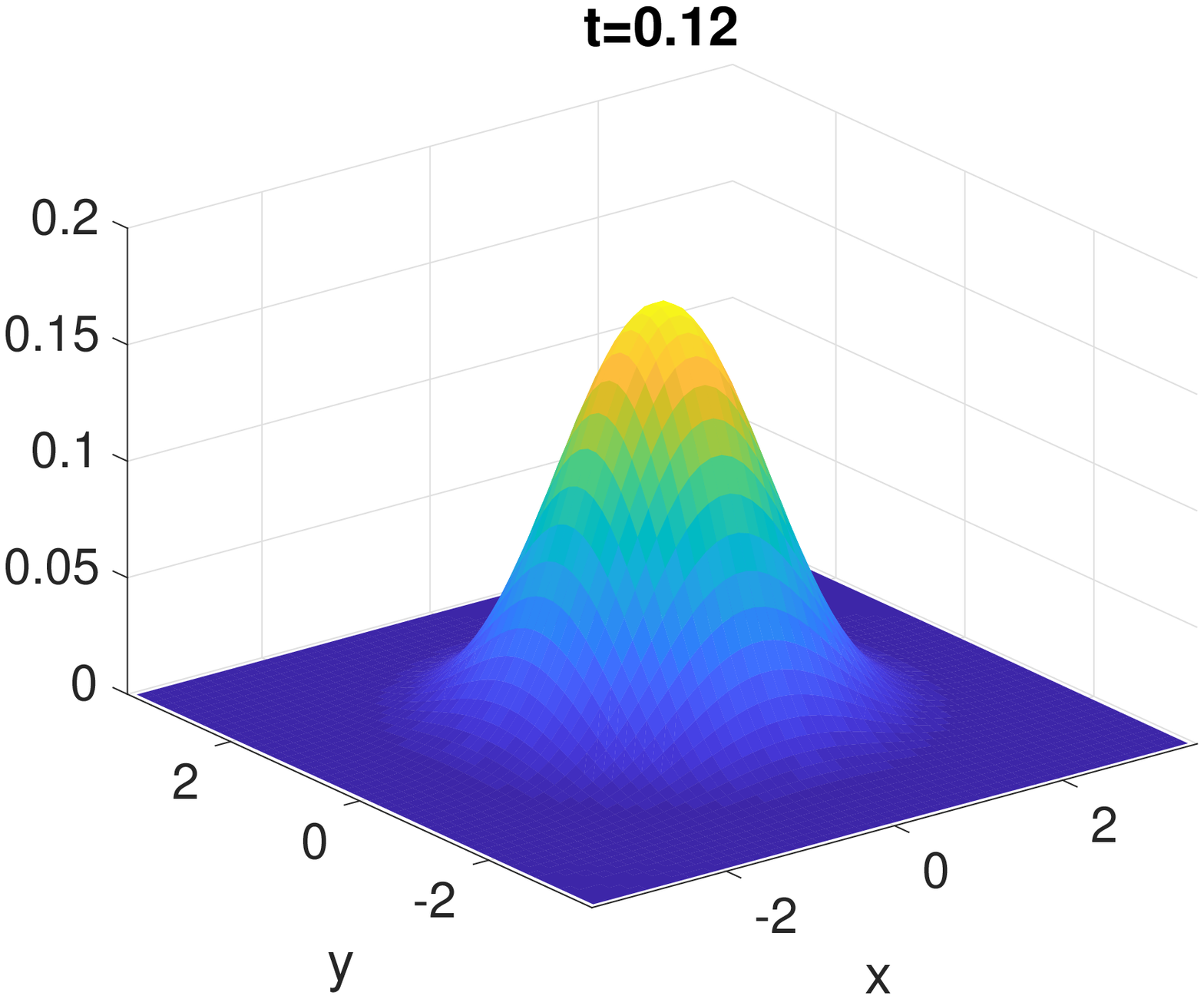}
\\
\includegraphics[width=0.32\textwidth]{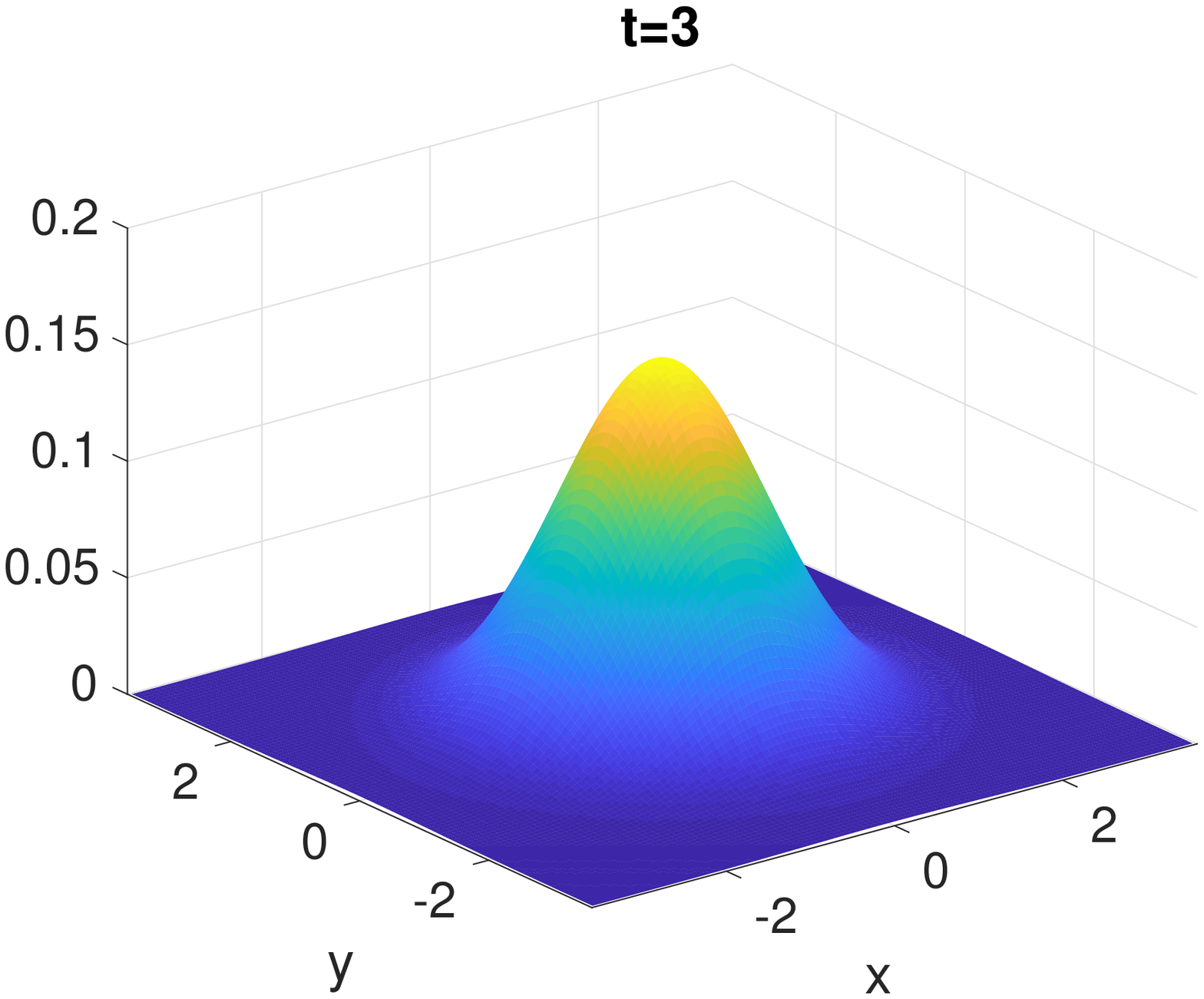}
\includegraphics[width=0.32\textwidth]{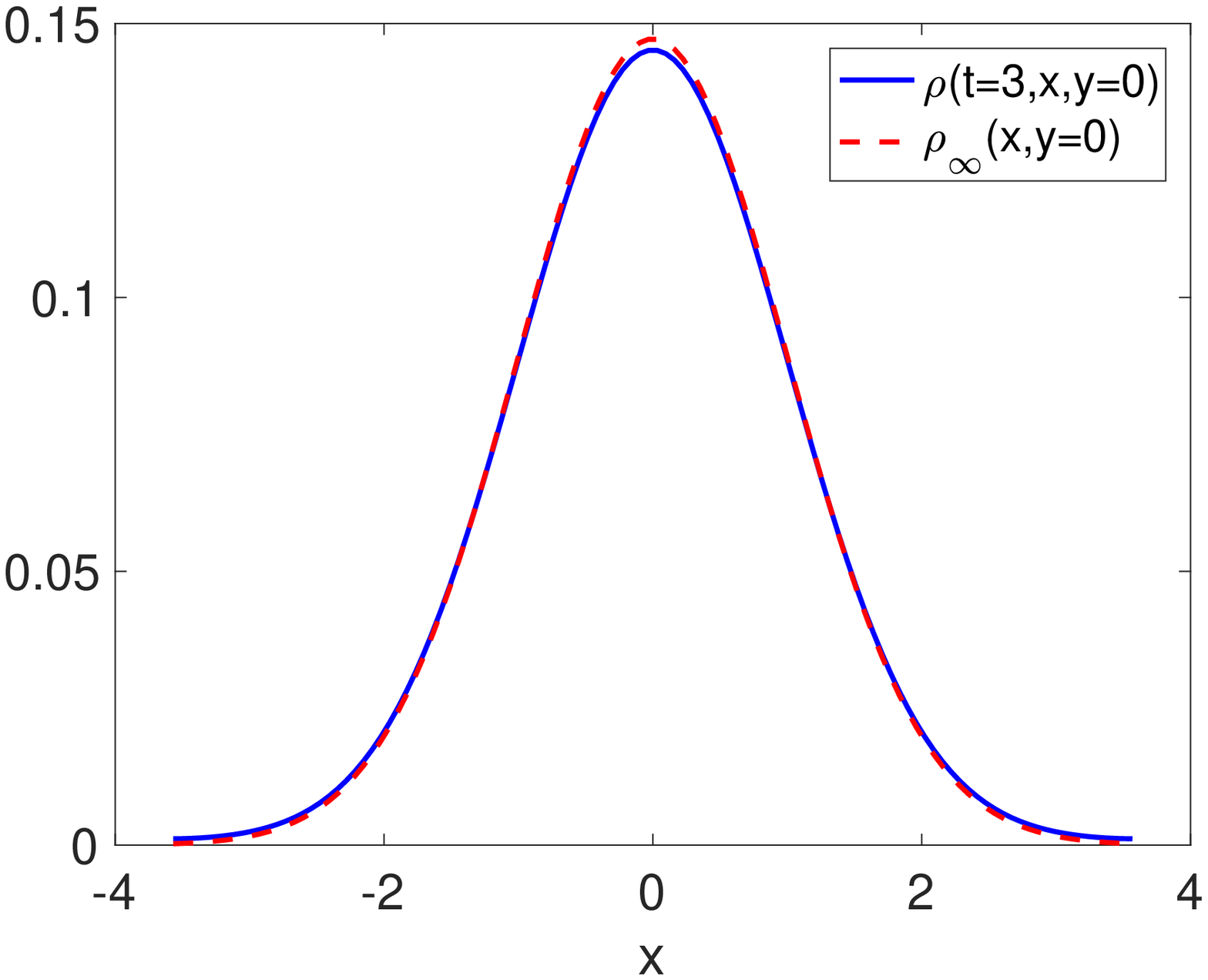}
\includegraphics[width=0.32\textwidth]{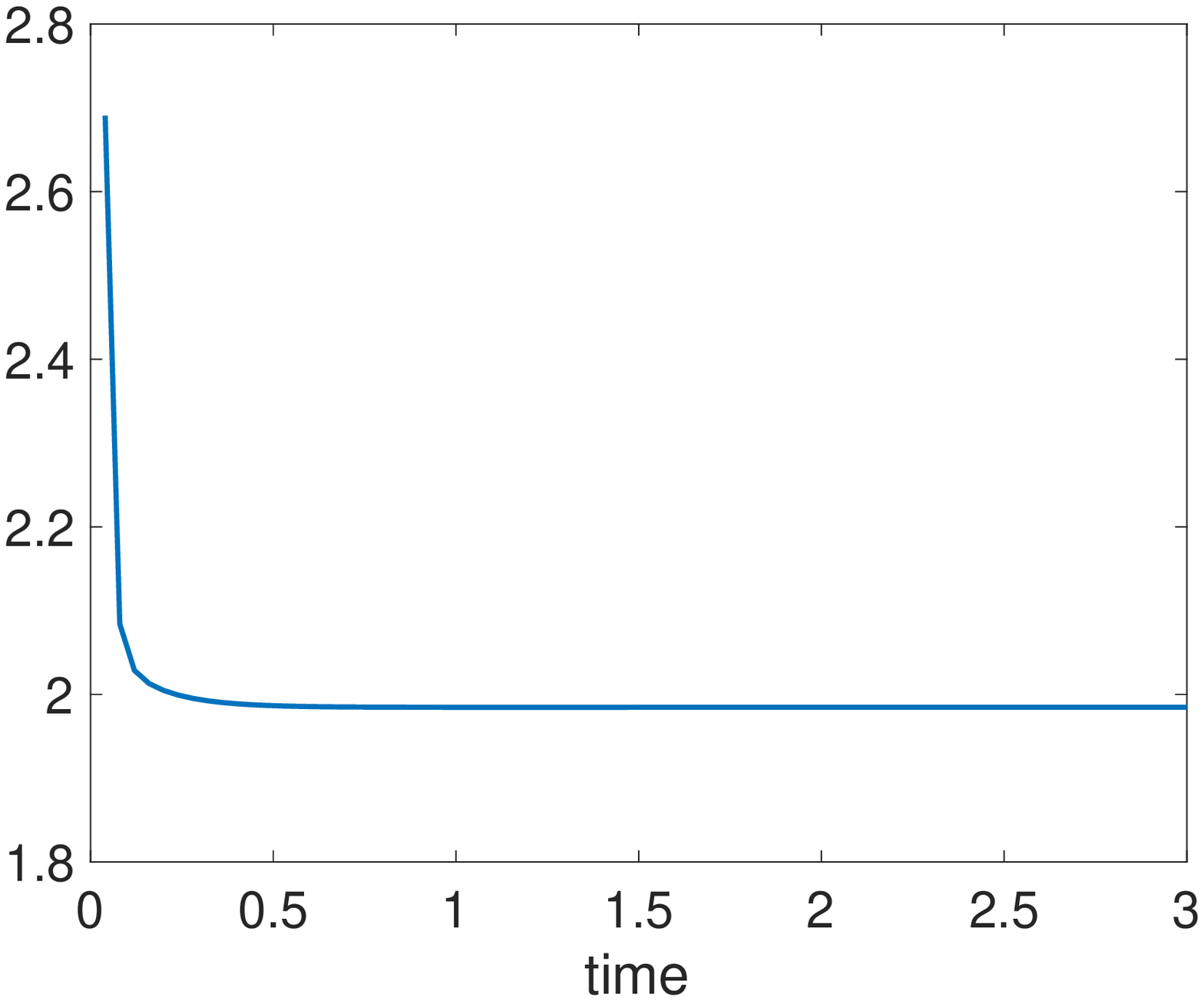}
\caption{The evolution of 2D DLSS equation with $V(x) = \frac{|x|^2}{2}$. Our computational domain is $(x,y) \in [-3.6,3.6]^2$ and mesh sizes are $\tau = 0.04$, $\Delta x = \Delta y = 0.0643$. }
\label{fig:dlss_2d}
\end{figure}

\section{Discussion}\label{section5}
In this paper, we propose a variational time discretization scheme for Wasserstein gradient flows. The scheme applies the quadric approximation of Wasserstein-2 metric and introduces the Fisher information regularization into the iterative regularization. 
In discrete grids, this regularized term helps the gradient flow path to maintain positivity during the evolution and further improves the convexity of the variational problem.

\medskip 
\noindent{\bf Acknowledgement:} WL was partially supported by AFOSR
MURI FA9550-18-1-0502. JL was partially supported by NSF under grant
DMS-1454939. LW was partially supported by NSF grant DMS-1903420 and
NSF CAREER grant DMS-1846854. The authors are grateful to the support
from KI-Net (NSF grant RNMS-1107444) and UMN-Math Visitors Program to
facilitate the collaboration.


\begin{thebibliography}{10}

\bibitem{BCH}
R.~Bailo, J.~A. Carrillo, and J.~Hu.
\newblock Fully discrete positivity-preserving and energy-dissipative schemes
  for nonlinear nonlocal equations with a gradient flow structure.
\newblock {\em preprint arXiv:}, 2018.

\bibitem{BCLR13-2}
D.~Balagu{\'e}, J.~A. Carrillo, T.~Laurent, and G.~Raoul.
\newblock Dimensionality of local minimizers of the interaction energy.
\newblock {\em Arch. Ration. Mech. Anal.}, 209(3):1055--1088, 2013.

\bibitem{BCCD16}
Alethea B.~T. Barbaro, Jos\'e~A. Ca\~nizo, Jos\'e~A. Carrillo, and Pierre
  Degond.
\newblock Phase transitions in a kinetic flocking model of {C}ucker-{S}male
  type.
\newblock {\em Multiscale Model. Simul.}, 14(3):1063--1088, 2016.

\bibitem{FISTA09}
Amir Beck and Marc Teboulle.
\newblock A fast iterative shrinkage-thresholding algorithm for linear inverse
  problems.
\newblock {\em SIAM journal on imaging sciences}, 2(1):183--202, 2009.

\bibitem{BB00}
J.-D. Benamou and Y.~Brenier.
\newblock A computational fluid mechanics solution to the {M}onge-{K}antorovich
  mass transfer problem.
\newblock {\em Numer. Math.}, 84:375--393, 2000.

\bibitem{BCL16}
J.-D. Benamou, G.~Carlier, and M.~Laborde.
\newblock An augmented {L}agrangian approach to {W}asserstein gradient flows
  and applications.
\newblock {\em ESAIM: PROCEEDINGS AND SURVEYS}, 54:1--17, 2016.

\bibitem{Benamou2000}
Jean-David Benamou and Yann Brenier.
\newblock A computational fluid mechanics solution to the monge-kantorovich
  mass transfer problem.
\newblock {\em Numerische Mathematik}, 84(3):375--393, Jan 2000.

\bibitem{BKSUV15}
Andrea~L. Bertozzi, Theodore Kolokolnikov, Hui Sun, David Uminsky, and James
  von Brecht.
\newblock Ring patterns and their bifurcations in a nonlocal model of
  biological swarms.
\newblock {\em Commun. Math. Sci.}, 13(4):955--985, 2015.

\bibitem{BLL12}
Andrea~L. Bertozzi, Thomas Laurent, and Flavien L\'eger.
\newblock Aggregation and spreading via the {N}ewtonian potential: the dynamics
  of patch solutions.
\newblock {\em Math. Models Methods Appl. Sci.}, 22(suppl. 1):1140005, 39,
  2012.

\bibitem{BCC08}
Adrien Blanchet, Vincent Calvez, and Jos\'e~A. Carrillo.
\newblock Convergence of the mass-transport steepest descent scheme for the
  subcritical {P}atlak-{K}eller-{S}egel model.
\newblock {\em SIAM J. Numer. Anal.}, 46(2):691--721, 2008.

\bibitem{BCW09}
Martin Burger, Jos{\'e}~Antonio Carrillo de~la Plata, and Marie-Therese
  Wolfram.
\newblock A mixed finite element method for nonlinear diffusion equations.
\newblock 2009.

\bibitem{CCCC18}
M.~Campos-Pinto, J.~A. Carrillo, F.~Charles, and Y.-P. Choi.
\newblock Convergence of a linearly transformed particle method for aggregation
  equations.
\newblock {\em to appear in Numer. Math.}, 2018.

\bibitem{Carlen}
Eric~A. Carlen.
\newblock Conservative diffusions.
\newblock {\em Communications in Mathematical Physics}, 94(3):293--315, 1984.

\bibitem{CarlierDuvalPeyreSchmitzer2015_convergence}
Guillaume Carlier, Vincent Duval, Gabriel Peyr\'e, and Bernhard Schmitzer.
\newblock Convergence of {{Entropic Schemes}} for {{Optimal Transport}} and
  {{Gradient Flows}}.
\newblock {\em arXiv:1512.02783 [math]}, 2015.

\bibitem{CCP18}
J.~A. Carrillo, K.~Craig, and F.~S. Patacchini.
\newblock A blob method for diffusion.
\newblock {\em preprint arXiv:1709.09195}, 2017.

\bibitem{CCY}
J.~A. Carrillo, K.~Craig, and Y.~Yao.
\newblock Aggregation-diffusion equations: dynamics, asymptotics, and singular
  limits.
\newblock {\em arXiv preprint arXiv:1810.03634}, 2018.

\bibitem{CDM}
J.~A. Carrillo, M.~G. Delgadino, and A.~Mellet.
\newblock Regularity of local minimizers of the interaction energy via obstacle
  problems.
\newblock {\em Comm. Math. Phys.}, 343(3):747--781, 2016.

\bibitem{CDT07}
J.~A. Carrillo, M.~Di~Francesco, and G.~Toscani.
\newblock Strict contractivity of the 2-{W}asserstein distance for the porous
  medium equation by mass-centering.
\newblock {\em Proc. Amer. Math. Soc.}, 135(2):353--363, 2007.

\bibitem{CFTV10}
J.~A. Carrillo, M.~Fornasier, G.~Toscani, and F.~Vecil.
\newblock Particle, kinetic, and hydrodynamic models of swarming.
\newblock {\em Modeling and Simulation in Science, Engineering and Technology},
  pages 297--336, 2010.

\bibitem{CHM14}
J.~A. Carrillo, Y.~Huang, and S.~Martin.
\newblock Explicit flock solutions for {Q}uasi-{M}orse potentials.
\newblock {\em European J. Appl. Math.}, 25(5):553--578, 2014.

\bibitem{CJMTU}
J.~A. Carrillo, A.~J\"{u}ngel, P.~A. Markowich, G.~Toscani, and A.~Unterreiter.
\newblock Entropy dissipation methods for degenerate parabolic problems and
  generalized {S}obolev inequalities.
\newblock {\em Monatsh. Math.}, 133(1):1--82, 2001.

\bibitem{CMV03}
J.~A. Carrillo, R.~McCann, and C.~Villani.
\newblock Kinetic equilibration rates for granular media and related equations:
  entropy dissipation and mass transportation estimates.
\newblock {\em Revista Matematica Iberoamericana}, 19(3):971--1018, 2003.

\bibitem{CM09}
J.~A. Carrillo and J.~Moll.
\newblock Numerical simulation of diffusive and aggregation phenomena in
  nonlinear continuity equations by evolving diffeomorphisms.
\newblock {\em SIAM J. Sci. Comput.}, 31(6):4305--4329, 2009.

\bibitem{CRW16}
J.~A. Carrillo, H.~Ranetbauer, and M.~Wolfram.
\newblock Numerical simulation of nonlinear continuity equations by evolving
  diffeomorphisms.
\newblock {\em J. Comput. Phys.}, 326:186--202, 2016.

\bibitem{CaTo00}
J.~A. Carrillo and G.~Toscani.
\newblock Asymptotic {$L^1$}-decay of solutions of the porous medium equation
  to self-similarity.
\newblock {\em Indiana Univ. Math. J.}, 49(1):113--142, 2000.

\bibitem{CCH15}
Jos\'e~A. Carrillo, Alina Chertock, and Yanghong Huang.
\newblock A finite-volume method for nonlinear nonlocal equations with a
  gradient flow structure.
\newblock {\em Commun. Comput. Phys.}, 17(1):233--258, 2015.

\bibitem{CCWW18}
Jose~A Carrillo, Katy Craig, Li~Wang, and Chaozhen Wei.
\newblock Primal dual methods for wasserstein gradient flows.
\newblock {\em arXiv preprint arXiv:1901.08081}, 2019.

\bibitem{CFP12}
Jos\'e~A. Carrillo, Lucas C.~F. Ferreira, and Juliana~C. Precioso.
\newblock A mass-transportation approach to a one dimensional fluid mechanics
  model with nonlocal velocity.
\newblock {\em Adv. Math.}, 231(1):306--327, 2012.

\bibitem{Chen2016}
Yongxin Chen, Tryphon~T. Georgiou, and Michele Pavon.
\newblock On the relation between optimal transport and schr{\"o}dinger
  bridges: A stochastic control viewpoint.
\newblock {\em Journal of Optimization Theory and Applications},
  169(2):671--691, May 2016.

\bibitem{CK14}
Yuxin Chen and Theodore Kolokolnikov.
\newblock A minimal model of predator{\textendash}swarm interactions.
\newblock {\em Journal of The Royal Society Interface}, 11(94), 2014.

\bibitem{CK08}
A.~Chertock and A.~Kurganov.
\newblock A second-order positivity preserving central-upwind scheme for
  chemotaxis and haptotaxis models.
\newblock {\em Numer. Math.}, 111(2):169--205, 2008.

\bibitem{CPSV18}
Lenaic Chizat, Gabriel Peyr{\'e}, Bernhard Schmitzer, and Fran{\c{c}}ois-Xavier
  Vialard.
\newblock Scaling algorithms for unbalanced optimal transport problems.
\newblock {\em Mathematics of Computation}, 87(314):2563--2609, 2018.

\bibitem{LiSBP2}
Shui-Nee Chow, Wuchen Li, Chenchen Mou, and Haomin Zhou.
\newblock A discrete {{Schrodinger}} bridge problem via optimal transport on
  graphs.
\newblock {\em CAM report 18-55}.

\bibitem{LiSBP}
Shui-Nee Chow, Wuchen Li, and Haomin Zhou.
\newblock A discrete {{Schrodinger}} equation via optimal transport on graphs.
\newblock {\em arXiv:1705.07583 [math]}, 2017.

\bibitem{Conforti}
Giovanni Conforti.
\newblock A second order equation for {{Schr}}$\backslash$"odinger bridges with
  applications to the hot gas experiment and entropic transportation cost.
\newblock {\em arXiv:1704.04821 [math]}, 2017.

\bibitem{CB16}
K.~Craig and A.~Bertozzi.
\newblock A blob method for the aggregation equation.
\newblock {\em Math. Comp.}, 85:1681--1717, 2016.

\bibitem{cuturi13}
Marco Cuturi.
\newblock Sinkhorn distances: Lightspeed computation of optimal transport.
\newblock In {\em Advances in neural information processing systems}, pages
  2292--2300, 2013.

\bibitem{FHK11}
R.~C. Fetecau, Y.~Huang, and T.~Kolokolnikov.
\newblock Swarm dynamics and equilibria for a nonlocal aggregation model.
\newblock {\em Nonlinearity}, 24(10):2681--2716, 2011.

\bibitem{Filbet06}
F.~Filbet.
\newblock A finite volume scheme for the {P}atlak-{K}eller-{S}egel chemotaxis
  model.
\newblock {\em Numer. Math.}, 104:457--488, 2006.

\bibitem{gentil15}
Ivan Gentil, Christian L{\'e}onard, and Luigia Ripani.
\newblock About the analogy between optimal transport and minimal entropy.
\newblock {\em arXiv preprint arXiv:1510.08230}, 2015.

\bibitem{GSG09quantum}
Ugo Gianazza, Giuseppe Savar{\'e}, and Giuseppe Toscani.
\newblock The {W}asserstein gradient flow of the {F}isher information and the
  quantum drift-diffusion equation.
\newblock {\em Archive for rational mechanics and analysis}, 194(1):133, 2009.

\bibitem{GT06}
Laurent Gosse and Giuseppe Toscani.
\newblock Lagrangian numerical approximations to one-dimensional
  convolution-diffusion equations.
\newblock {\em SIAM J. Sci. Comput.}, 28(4):1203--1227, 2006.

\bibitem{pCG01}
Nicholas~IM Gould, Mary~E Hribar, and Jorge Nocedal.
\newblock On the solution of equality constrained quadratic programming
  problems arising in optimization.
\newblock {\em SIAM Journal on Scientific Computing}, 23(4):1376--1395, 2001.

\bibitem{JKO98}
R.~Jordan, D.~Kinderlehrer, and F.~Otto.
\newblock The variational formulation of the {F}okker-{P}lank equation.
\newblock {\em SIAM. J. Math. Anal.}, 29(1):1--17, 1998.

\bibitem{KellerSegel1971}
E.~Keller and L.~Segel.
\newblock Traveling bands of chemotactic bacteria: a theoretical analysis.
\newblock {\em J. Theoret. Biol.}, 30(2):6420--6437, 1971.

\bibitem{Leger}
Flavien L\'eger.
\newblock A {{Geometric Perspective}} on {{Regularized Optimal Transport}}.
\newblock {\em Journal of Dynamics and Differential Equations}, 2018.

\bibitem{LegerLi2019_hopfcoleb}
Flavien L\'eger and Wuchen Li.
\newblock Hopf-{{Cole}} transformation via generalized schr{\"o}dinger bridge
  problem.
\newblock {\em arXiv:1901.09051 [math]}, 2019.

\bibitem{Leonard2013_surveyb}
Christian L\'eonard.
\newblock A survey of the {{Schr\"odinger}} problem and some of its connections
  with optimal transport.
\newblock {\em Discrete and Continuous Dynamical Systems}, 34(4):1533--1574,
  2013.

\bibitem{LiG}
Wuchen Li.
\newblock Geometry of probability simplex via optimal transport.
\newblock {\em arXiv:1803.06360 [math]}, 2018.

\bibitem{Li2018}
Wuchen Li, Penghang Yin, and Stanley Osher.
\newblock Computations of optimal transport distance with fisher information
  regularization.
\newblock {\em Journal of Scientific Computing}, 75(3):1581--1595, Jun 2018.

\bibitem{LTWZ18b}
Jian-Guo Liu, Min Tang, Li~Wang, and Zhennan Zhou.
\newblock An accurate front capturing scheme for tumor growth models with a
  free boundary limit.
\newblock {\em Journal of Computational Physics}, 364:73 -- 94, 2018.

\bibitem{LWZ17}
Jian-Guo Liu, Li~Wang, and Zhennan Zhou.
\newblock Positivity-preserving and asymptotic preserving method for 2d
  {K}eller-{S}egal equations.
\newblock {\em Mathematics of Computation}, 87(311):1165--1189, 2018.

\bibitem{OM17}
Horst Osberger and Daniel Matthes.
\newblock Convergence of a fully discrete variational scheme for a thin-film
  equation.
\newblock In {\em Topological optimization and optimal transport}, volume~17 of
  {\em Radon Ser. Comput. Appl. Math.}, pages 356--399. De Gruyter, Berlin,
  2017.

\bibitem{Otto}
Felix Otto.
\newblock The {{Geometry}} of {{Dissipative Evolution Equations}}: {{The Porous
  Medium Equation}}.
\newblock {\em Communications in Partial Differential Equations},
  26(1-2):101--174, 2001.

\bibitem{dataSBP}
Michele Pavon, Esteban~G. Tabak, and Giulio Trigila.
\newblock The data-driven {{Schroedinger}} bridge.
\newblock {\em arXiv:1806.01364 [math]}, 2018.

\bibitem{peyre2015entropic}
Gabriel Peyr{\'e}.
\newblock Entropic approximation of wasserstein gradient flows.
\newblock {\em SIAM Journal on Imaging Sciences}, 8(4):2323--2351, 2015.

\bibitem{Peyre2015_entropic}
Gabriel Peyr\'e.
\newblock Entropic {{Wasserstein Gradient Flows}}.
\newblock {\em arXiv:1502.06216 [math]}, 2015.

\bibitem{SCS18}
Zheng Sun, Jos\'e~A. Carrillo, and Chi-Wang Shu.
\newblock A discontinuous {G}alerkin method for nonlinear parabolic equations
  and gradient flow problems with interaction potentials.
\newblock {\em J. Comput. Phys.}, 352:76--104, 2018.

\bibitem{TBL06}
C.~Topaz, A.~Bertozzi, and M.~Lewis.
\newblock A nonlocal continuum model for biological aggregation.
\newblock {\em Bull. Math. Bio.}, 68(7):1601--1623, 2006.

\bibitem{VazquezPME}
Juan~Luis V{\'a}zquez.
\newblock {\em The porous medium equation}.
\newblock Oxford Mathematical Monographs. The Clarendon Press, Oxford
  University Press, Oxford, 2007.
\newblock Mathematical theory.

\end{thebibliography}
\end{document}